\newcommand{\mathlarger}{}
\newtheorem{theorem}{Theorem}[section]
\newtheorem{lemma}[theorem]{Lemma}
\newtheorem{corollary}[theorem]{Corollary}
\newtheorem{proposition}[theorem]{Proposition}
\theoremstyle{definition}
\newtheorem{definition}[theorem]{Definition}
\newtheorem{example}[theorem]{Example}
\theoremstyle{remark}
\newcommand{\isom}{\cong}
\newcommand{\norm}[1]{\lVert#1\rVert}
\newcommand{\abs}[1]{\lvert#1\rvert}
\newcommand{\xto}{\xrightarrow}
\newcommand{\st}{\ | \ }
\newcommand{\eps}{\varepsilon}
\newcommand{\To}{\Rightarrow}  
\newcommand{\incl}{\hookrightarrow}
\newcommand{\from}{\leftarrow}
\newcommand{\R}{\mathbb{R}}
\newcommand{\Z}{\mathbb{Z}}
\newcommand{\F}{\mathbb{F}}
\newcommand{\cat}[1]{\mathbf{#1}}
\renewcommand{\Vec}{\cat{Vec}}
\newcommand{\Top}{\cat{Top}} 
\newcommand{\Cat}{\cat{Cat}}
\newcommand{\Pair}{\cat{Pair}}
\newcommand{\A}{\cat{A}}
\newcommand{\C}{\cat{C}}
\newcommand{\D}{\cat{D}}
\newcommand{\E}{\cat{E}}
\newcommand{\RVec}{{\Vec^{\cat{(\R,\leq)}}}}
\newcommand{\ZVec}{{\Vec^{\cat{(\Z,\leq)}}}}
\newcommand{\RTop}{{\Top^{\cat{(\R,\leq)}}}}
\newcommand{\RPair}{\Pair^{\cat{(\R,\leq)}}}
\newcommand{\RD}{{\D^{\cat{(\R,\leq)}}}}
\newcommand{\RA}{{\A^{\cat{(\R,\leq)}}}}
\newcommand{\CD}{\D^{\C}}
\newcommand{\B}{\mathcal{B}}
\newcommand{\Inter}[2][\eps]{\cat{Int_{#1}({#2})}}
\newcommand{\coker}{\operatorname{coker}}
\DeclareMathOperator{\im}{im}
\DeclareMathOperator{\Id}{Id}
\DeclareMathOperator{\dom}{dom}
\title{Categorification of persistent homology}
\author{Peter Bubenik}
\thanks{The first author gratefully acknowledges support from AFOSR grant \# FA9550-13-1-0115.}
\author{Jonathan A. Scott}
\address{Department of Mathematics, Cleveland State University, 2121 Euclid Ave. RT 1515, Cleveland, OH 44115
}
\email{p.bubenik@csuohio.edu}
\email{j.a.scott3@csuohio.edu}
\keywords{Applied topology, persistent topology, topological persistence, diagrams indexed by the poset of real numbers, interleaving distance}
\subjclass[2010]{55N99, 68W30, 18A25, 18E10, 54E35}
\begin{document}

\begin{abstract}
  We redevelop persistent homology (topological persistence) from a categorical point of view. The main objects of study are $\cat{(\R,\leq)}$-indexed diagrams in some target category. A set of such diagrams has an \emph{interleaving} distance, which we show generalizes the previously-studied bottleneck distance. 
To illustrate the utility of this approach, we generalize previous stability results for persistence, extended persistence, and kernel, image and cokernel persistence.
We give a natural construction of a category of $\eps$-interleavings of $\cat{(\R,\leq)}$-indexed diagrams in some target category, and show that if the target category is abelian, so is this category of interleavings.
\end{abstract}

\maketitle

\section{Introduction}
\label{sec:intro}

The ideas of topological persistence~\cite{elz:tPaS} and persistent homology~\cite{zomorodianCarlsson:computingPH} have had a great impact on computational geometry and the newer field of applied topology.
This method applies geometric and algebraic constructions to input from applications, followed by clever modifications of tools from algebraic topology. It has found many uses, and the results can be global qualitative descriptions inaccessible to other methods. 
Subsequent theoretical work in this subject has given stronger results, and adapted the basic constructions so they might be applied in more diverse situations.
For surveys and books on this subject see \cite{ghrist:survey, edelsbrunnerHarer:survey,carlsson:topologyAndData,edelsbrunnerHarer:book, zomorodian:book}.

\subsection{Motivation}

Throughout its history, algebraic topology has frequently undergone a process in which previous results were redeveloped from a more abstract point of view. 
This has had two main advantages.
First,  abstraction clarified the key ideas and proofs.
Second, and more importantly, the more abstract setting allowed previous results to be vastly generalized and applied in ways never considered in the original. 
The development and use of category theory has been a critical part of this process.

The main motivation of this paper is to subject the ideas and results of topological persistence to this process.

\subsection{Prior work}

In the descriptions below, we will make anachronistic use of this paper's point of view, in particular, its focus on diagrams (see \eqref{eq:Rdiagram} and Section~\ref{sec:categoryTheory}).

Two foundational papers in this subject are \cite{elz:tPaS} and \cite{zomorodianCarlsson:computingPH}.
In the first, Edelsbrunner, Letscher and Zomorodian define persistent homology
for $(\Z_+,\leq)$-indexed diagrams of finite dimensional vector spaces, that are obtained from 
filtered finite simplicial complexes by taking simplicial homology with coefficients in a field.
In the second, Zomorodian and Carlsson take a purely algebraic point of view.
They define persistent homology for tame $(\Z_+,\leq)$-indexed diagrams of finite-dimensional vector spaces, and prove a bijection between isomorphism classes of such tame diagrams and finite barcodes whose endpoints lie in $\Z_+ \cup \{\infty\}$.
$(\Z_+,\leq)$-indexed diagrams of finite dimensional vector spaces are called {persistence modules}.

These papers are rounded out by \cite{cseh:stability}, where Cohen-Steiner, Edelsbrunner and Harer prove that persistent homology is useful in applications by showing that it is stable in the following sense.
Let $f,g:X \to \R$ be continuous functions on a triangulable space.
Define an $\cat{(\R,\leq)}$-indexed diagram of topological spaces, $F$, by setting $F(a) = f^{-1}(-\infty,a]$ and letting $F(a\leq b)$ be given by inclusion.
Define $G$ similarly using $g$.
Let $H$ be the singular homology functor with coefficients in a field.
Assume that $HF$ and $HG$ are diagrams of finite dimensional vector spaces and that they are tame. 
Then the bottleneck distance between $HF$ and $HG$ is bounded by the supremum norm between $f$ and $g$.

This stability result is significantly strengthened by Chazal, Cohen-Steiner, Glisse, Guibas and Oudot in \cite{ccsggo:interleaving}.
They drop the assumptions that $X$ be triangulable, that $f,g$ be continuous, and that $HF$ and $HG$ be tame.
Their approach is crucial to this paper.
They explicitly work with $\cat{(\R,\leq)}$-indexed diagrams, though they consider them from an algebraic, not categorical, point of view.
They define the interleaving distance, $d$, between such diagrams, and define the bottleneck distance, $d_B$, between such diagrams using limits of discretizations, and show that $d_B \leq d$.

The basic idea of persistent homology has been extended in numerous ways. Here we focus on two particularly useful extensions, given in \cite{cseh:extendingP} and \cite{csehm:kernels}.
In the first, Cohen-Steiner, Edelsbrunner and Harer, define extended persistence for finite-dimensional simplicial complexes with a finite filtration and homology with coefficients in $\Z / 2\Z$. They show that in this case the stability result of~\cite{cseh:stability} applies.
In the second, Cohen-Steiner, Edelsbrunner, Harer and Morozov consider a triangulated space $X$ with subcomplex $Y$ and maps $f,f':X \to \R$ and $g,g':Y \to \R$ such that for all $y \in Y$, $f(y)\leq g(y)$ and $f'(y) \leq g'(y)$. 
They assume that $f,f',g,g'$ are continuous and tame.
Then there are maps of the corresponding $\cat{(\R,\leq)}$-indexed diagrams $HG \to HF$ and $HG' \to HF'$.
Let $\eps = \max\{\norm{f-f'}_{\infty},\norm{g-g'}_{\infty}\}$.
The authors show that the bottleneck distances between the kernels, images and cokernels, respectively, of these maps, are each bounded above by $\eps$.

An early categorical approach to persistence can be found in~\cite{cfp:sizeCategorical}.

\subsection{Our contributions}

We redevelop persistent homology from a categorical point of view.
In particular, we consider diagrams indexed by $\cat{(\R,\leq)}$ to be the main objects of study. 
An $\cat{(\R,\leq)}$-indexed diagram consists of a set of objects $X(a)$ for each $a \in \R$ and morphisms 
\begin{equation} \label{eq:Rdiagram} 
X(a) \to X(b)
\end{equation}
for each $a\leq b$, satisfying certain composition and unit axioms (see Section~\ref{sec:categoryTheory}).
The objects and morphisms lie in some fixed category, such as topological spaces and continuous maps, or finite-dimensional vector spaces and linear transformations.
In Section~\ref{sec:categoricalPH}, we show that the basic constructions of persistent homology are special cases of this construction. 
We will show that in this setting, functoriality provides concise and powerful results.

In Section~\ref{sec:interleaving},
we define an $\eps$-interleaving for $\cat{(\R,\leq)}$-indexed diagrams (Definition~\ref{def:interleaving}) and show that this induces a metric (Theorem~\ref{thm:metric} and Corollary~\ref{cor:metric}).

We specialize to $\cat{(\R,\leq)}$-indexed diagrams of finite-dimensional vector spaces in Section~\ref{sec:vec}.
These are also called \emph{(real) persistence modules}.
We study barcodes, persistence diagrams, and the bottleneck and interleaving distances.
We define finite type diagrams to be direct sums of certain indecomposable diagrams (Definition~\ref{def:finiteType}).
We show that these are exactly the tame diagrams (Theorem~\ref{thm:tameFiniteType}).
Furthermore, we show that they satisfy a Krull-(Remak-)Schmidt theorem. That is, the direct sum decomposition is essentially unique (Corollary~\ref{cor:krullSchmidt}).
We show that the metric space of finite barcodes together with the bottleneck distance embeds isometrically into the metric space of $\cat{(\R,\leq)}$-indexed diagrams of finite-dimensional vector spaces with the interleaving distance
(Theorem~\ref{thm:embedding}).
This result justifies our assertion that our stability theorems, which use the interleaving distance, are generalizations of previously established stability theorems, which use the bottleneck distance.

In Section~\ref{sec:stability}, we give a simple formal argument for a stability theorem for the interleaving distance.
By the previous work identifying the interleaving and bottleneck distances, this
allows us to both remove assumptions, and to significantly generalize,
the stability result of~\cite{cseh:stability}. 
Given any functions $f,g:X \to \R$ on any topological space $X$ and any functor $H$ on topological spaces, we show that the interleaving distance of $HF$ and $HG$ is bounded above by the supremum norm between $f$ and $g$ (Theorem~\ref{thm:stability}).

We generalize the extended persistence construction of~\cite{cseh:extendingP} in Section~\ref{sec:extended}. For any (not necessarily continuous) map $f:X \to (-\infty,M] \subset \R$, we define a $\cat{(\R,\leq)}$-indexed diagram of pairs of topological spaces. We prove a stability theorem for extended persistence. Given $f,g:X \to (-\infty,M]$ and corresponding diagrams $F$ and $G$ of pairs of spaces, and any functor $H$ on pairs of spaces, the interleaving distance between $HF$ and $HG$ is bounded above by the supremum norm between $f$ and $g$ (Theorem~\ref{thm:stability-extended}).

In Section~\ref{sec:abelian}, we define a category of interleavings of $\cat{(\R,\leq)}$-indexed diagrams in a given base category (Definition~\ref{def:Inter}). 
We show that in the case that the base category is an abelian category, then so is this category of interleavings (Theorem~\ref{thm:inter-abcat}).
As a result, this category has direct sums, kernels, images and cokernels.
As an application, we generalize the stability theorem of~\cite{csehm:kernels}, dropping the assumptions that $X$ and $Y$ are triangulated, that $f,f',g,g'$ are continuous and tame, replacing the subcomplex condition with a continuous map $Y \to X$ and replacing singular homology with coefficients in $\Z/2\Z$, with any functor from topological spaces to an abelian category (Theorem~\ref{thm:stability-kernels}).
We also give a version of this theorem for extended persistence (Theorem~\ref{thm:stability-kernels-extended}).

\subsection{Comparison with other recent work}
\label{sec:recent}

The material in Section~\ref{sec:vec} has been studied in greater
detail in the algebraic setting by Lesnick~\cite{lesnick:thesis} and by
Chazal, de Silva, Glisse and Oudot~\cite{csgo:persistenceModules}. In
particular, Lesnick proves a more general Isometry
Theorem~\cite[Theorem 2.4.2]{lesnick:thesis}, removing the condition
that the persistence modules have finite type from our
Theorem~\ref{thm:embedding}.  This is further generalized to q-tame
persistence modules in~\cite[Theorem 4.11]{csgo:persistenceModules}.
We also remark that one of the directions in our isometry theorem is
due to~\cite{ccsggo:interleaving}.
Crawley-Boevey~\cite{crawley-boevey} has shown that any
$(\R,\leq)$-indexed diagram of finite-dimensional vector spaces is a
direct sum of interval modules. In light of this result, it may be
possible to generalize some of our work in Section~\ref{sec:vec}.

Our Stability Theorem (Theorem~\ref{thm:stability}) is quite general
and once the categorical machinery has been set up, has a very simple
proof. However it applies to persistence modules, not for their
corresponding persistence diagrams. In the language of~\cite{bdss:1},
it is a soft stability theorem.  Hard stability
theorems~\cite{cseh:stability,ccsggo:interleaving,csgo:persistenceModules}
giving stability for persistence diagrams require more detailed
analysis. For example, an Isometry Theorem can be used to show that
soft stability implies hard stability. On the other hand our Stability
Theorem is more general in that it applies to functors to arbitrary
categories. For a simple example, consider homology with integer
coefficients.  It also clarifies what part of stability is purely formal
and what part requires detailed analysis. This viewpoint is expanded
upon in~\cite{bdss:1}.

\section{Background}
\label{sec:background}

In Section~\ref{sec:categoryTheory}, we give the basic definitions of category theory that we will use throughout the paper. In Section~\ref{sec:categoricalPH}, we show how the standard constructions of persistent homology fit within our categorical approach. 
The last two sections give more specialized background.
In Section~\ref{sec:abelianCatDefs}, we define abelian categories, which we use in Section~\ref{sec:abelian}.
In Section~\ref{sec:algebra}, we give some algebraic definitions used in the proof of Theorem~\ref{thm:tameFiniteType}.

\subsection{Categorical terminology}
\label{sec:categoryTheory}

A \emph{category}, $\cat{C}$, consists of a class of objects, $\cat{C}_{0}$, and for each pair of objects $X,Y \in \cat{C}_{0}$, a set of \emph{morphisms}, $\cat{C}(X,Y)$.  We often write $f : X \rightarrow Y$ if $f \in \cat{C}(X,Y)$.  For every triple $X,Y,Z \in \cat{C}_{0}$, there is a set mapping,
\[
	\cat{C}(Y,Z) \times \cat{C}(X,Y) \rightarrow \cat{C}(X,Z), \quad (g,f) \mapsto gf
\]
called \emph{composition}.  Composition must be \emph{associative}, in the sense that $(hg)f = h(gf)$.  Finally, for all $X \in \cat{C}$, there is an \emph{identity} morphism, $\Id_{X} : X \rightarrow X$, that satisfies $\Id_{X} f = f$ and $g \Id_{X} = g$ for all $f : W \rightarrow X$ and all $g : X \rightarrow Y$.  The identity morphism is unique.  We will regularly abuse notation and write $X \in \cat{C}$ to mean $X \in \cat{C}_{0}$.

A category $\cat{C}$ is called \emph{small} if $\cat{C}_{0}$ is a set rather than a proper class.  

\begin{example}
Let $\Top$ be the category whose objects are all topological spaces, and whose morphisms are all continuous maps.  Here, composition is the composition of mappings, and the identity morphisms are what one would expect.

A related category is $\Pair$, whose objects are pairs $(X,A)$, where $X$ is a topological space and $A$ is a  subspace of $X$.  A morphism from $(X,A)$ to $(Y,B)$ is a continuous map $f : X \rightarrow Y$ such that $f(A) \subset B$.  We express this condition by saying that the diagram 
\[
	\xymatrix{
		A	\ar[r]^{f|_{A}} \ar[d]_{j_{A}}	& B 	\ar[d]^{j_{B}}	\\
		X	\ar[r]_{f}				& Y
	}
\]
commutes, where $j_{A}$ and $j_{B}$ are the canonical inclusions, and $f|_{A}$ is $f$ restricted to $A$.
\end{example}

\begin{example}
Let $\Vec$ be the category of finite-dimensional vector spaces over a fixed ground field $\F$, along with the linear transformations between them.  Again, composition is that of mappings, and the identities are simply the identity mappings.

A \emph{graded vector space} is a collection $V_{*} = \{ V_{n} \}_{n \in \Z}$, with each $V_{n} \in \Vec$.  A morphism, $f_{*} : V_{*} \rightarrow W_{*}$, of graded vector spaces is a sequence, $f_{*} = \{ f_{n} : V_{n} \rightarrow W_{n} \}$.
Denote by $\cat{grVec}$ the category of graded vector spaces and their morphisms.
\end{example}

A reflexive, antisymmetric, and transitive relation $\leq$ on a set $P$ is called a \emph{partial order}. A set $P$ equipped with a partial order is called a \emph{poset}.   We identify each poset $P$ with the small category $\cat{P}$ that has $\cat{P}_{0} = P$, and $\cat{P}(x,y)$ has precisely one element if $x \leq y$ and is otherwise empty.  Conversely, let $\cat{P}$ be a small category in which each set of morphisms contains at most one element, and if $\cat{P}(x,y)$ and $\cat{P}(y,x)$ are both nonempty, then $x=y$.  Then $\cat{P}_{0}$ is a poset, with partial ordering defined by $x \leq y$ if and only if $\cat{P}(x,y) \neq \varnothing$. 

\begin{example}
The set of real numbers, $\R$, with its usual ordering, is a poset.  The set of integers, $\Z$, of non-negative integers $\Z_{+}$, and $\cat{[n]} = \{ 0, \ldots, n \}$, are sub-posets.
For a partial order that is not a total order, consider the set $\R^n$ with $n>1$ and the ordering $(x_1,\ldots,x_n) \leq (y_1,\ldots,y_n)$ if and only if $x_i \leq y_i$ for all $i = 1, \ldots, n$.
\end{example}

Two objects $X,Y \in \cat{C}_{0}$ are said to be \emph{isomorphic} if there exist morphisms $f : X \rightarrow Y$ and $g : Y \rightarrow X$ such that $g f = \Id_{X}$ and $f g = \Id_{Y}$.  In this case, $f$ and $g$ are called \emph{isomorphisms}.  Clearly, isomorphism is an equivalence relation.  In $\Top$, isomorphism becomes \emph{homeomorphism}.

The notion of \emph{functor} expresses relationships between categories.  Let $\cat{A}$ and $\cat{C}$ be categories.  A \emph{functor}, $F : \cat{A} \rightarrow \cat{C}$, consists of a mapping $F:\cat{A}_{0} \rightarrow \cat{C}_{0}$, and for each pair $X,Y \in \cat{A}_{0}$, a mapping $F : \cat{A}(X,Y) \rightarrow \cat{C}(F(X), F(Y))$.  These mappings must be compatible with the composition and identity structure of the categories, in the sense that if $f : X \rightarrow Y$ and $g : Y \rightarrow Z$, then $F(gf) = F(g)F(f)$, and if $X \in\cat{A}_0$, then $F(\Id_{X}) = \Id_{F(X)}$.

\begin{example}
Denote by $H_{*}(-)$ singular homology with coefficients in some fixed field, $\F$.  Then $H_{*}(X)$ is a graded $\F$-vector space for all $X \in \Top_{0}$.  Furthermore, if $f : X \rightarrow Y$ is continuous, then we get the induced homomorphism, $H_{*}(f) : H_{*}(X) \rightarrow H_{*}(Y)$.  Since $H_{*}(gf) = H_{*}(g)H_{*}(f)$, singular homology defines a functor $H_{*} : \Top \rightarrow \cat{grVec}$.
If we consider only homology in degree $k$, then we get a functor $H_k:\Top \to \Vec$.
\end{example}

Let $F,G : \cat{A} \rightarrow \cat{C}$ be functors. A \emph{natural transformation}  $\eta : F \Rightarrow G$ consists of, for all $A \in \cat{A}_{0}$, a morphism $\eta_{A} : F(A) \rightarrow G(A)$ in $\cat{C}$, such that whenever $\varphi : A \rightarrow A'$ is a morphism, the diagram
\begin{equation} \label{eq:naturalTransformation}
	\xymatrix{
		F(A) \ar[r]^{\eta_{A}} \ar[d]_{F(\varphi)} & G(A) \ar[d]^{G(\varphi)}	\\
		F(A')	\ar[r]_{\eta_{A'}} 				& G(A')
	}
\end{equation}
commutes.
If for all $A \in \cat{A}_0$, $\eta_A$ is an isomorphism, then $\eta$ is called a \emph{natural isomorphism} and we write $F \isom G$.

\begin{example}
Consider the poset $(\R,\leq)$, and let $\eps \geq 0$.  Define $T_{\eps}:(\R, \leq) \rightarrow (\R,\leq)$ by $T_{\eps}(x) = x + \eps$.  If $x \leq y$ then $x + \eps \leq y + \eps$, so $T_{\eps}$ defines a functor to $(\R, \leq)$ to itself.  We call $T_{\eps}$ \emph{translation by $\eps$}. Since $\eps \geq 0$, $x \leq x + \eps$ for all $x \in \R$, so we get a natural transformation $\eta : I \Rightarrow T_{\eps}$, where $I : \R \rightarrow \R$ is the identity functor.
\end{example}

The collection of all small categories, and the functors between them, itself forms a category, denoted by $\Cat$.

Let $\cat{C}$ and $\cat{D}$ be categories with $\cat{C}$ small.  A functor, $F : \cat{C} \rightarrow \cat{D}$, is called a \emph{diagram in $\cat{D}$ indexed by $\cat{C}$}.  The collection of all such functors, and natural transformations between them, forms a category, $\CD$.  

\begin{example}
Let $\cat{C}$ be the discrete category whose objects are the integers; the only morphisms are the identity morphisms.  Then $\Vec^{\cat{C}} = \cat{grVec}$.
\end{example}

\begin{example}
A diagram $F$ in a category $\cat{D}$ indexed by $(\Z_{+}, \leq)$ is a sequence of morphisms in $\cat{D}$:
\[
	F(0) \rightarrow F(1) \rightarrow F(2) \rightarrow \cdots.
\]
If $\cat{D} = \Top$ then each $F(n)$ is a topological space 
and the morphisms are continuous maps.
If $\cat{D} = \Vec$ then each $F(n)$ is a finite-dimensional vector space
and the morphisms are linear maps.

Indexed by $(\Z,\leq)$, the diagram extends in both directions:
\[
	\cdots \rightarrow F(-2) \rightarrow F(-1) \rightarrow F(0) \rightarrow F(1) \rightarrow F(2) \rightarrow \cdots.
\]
If the indexing category is $(\R,\leq)$, then we have objects $F(a)$ for all $a \in \R$, and for each $a \leq b$, a morphism $F(a) \rightarrow F(b)$.
\end{example}

Given two natural transformations $\varphi: F \Rightarrow G$ and $\psi: G \Rightarrow H$, their (vertical) composition $\psi \circ \varphi$ is the natural transformation given by the composition of morphisms $F(A) \xto{\varphi_A} G(A) \xto{\psi_A} H(A)$ and the composition of the corresponding commutative squares~\eqref{eq:naturalTransformation}.

For $i=1,2$, let $F_{i},G_{i}:\cat{A}_{i} \rightarrow \cat{A}_{i+1}$ be functors, and let $\varphi_{i} : F_{i} \Rightarrow G_{i}$ be a natural transformation.  The (horizontal) composition of $\varphi_{1}$ and $\varphi_{2}$ is the natural transformation, $\varphi_{2}\varphi_{1}:F_{2}F_{1} \Rightarrow G_{2}G_{1}$, defined on morphisms by
$(\varphi_{2}\varphi_{1})(f) = \varphi_{2}(\varphi_{1}(f))$.
For every functor $H$, there is the identity natural isomorphism $\Id_H: H \Rightarrow H$. We abuse notation and refer to the horizontal composition of a natural transformation $\varphi$ with $\Id_H$ as the composition of $\varphi$ with $H$.

\subsection{Categorical persistent homology}
\label{sec:categoricalPH}

In this section we consider two prototypical examples in which persistent homology is applied and show how they fit into our categorical framework.
We also show how diagrams indexed by $\cat{[n]}$, $\cat{(\Z_+,\leq)}$, and $\cat{(\Z,\leq)}$ are special cases of diagrams indexed by $\cat{(\R,\leq)}$.
Finally, we define persistent homology.

\subsubsection{Filtered simplicial complexes}

First, let $K$ be a finite simplicial complex with filtration 
\begin{equation*}
  \emptyset = K_0 \subseteq K_1 \subseteq \cdots \subseteq K_n = K.
\end{equation*}
Then this gives an $\cat{[n]}$-indexed diagram of topological spaces, i.e. $K \in \Top^{\cat{[n]}}$, with $K(i) = K_i$ and $K(i\leq j)$ given by inclusion.

Let $H_k$ be the degree $k$ simplicial homology functor with coefficients in a field $\F$.
Then $H_k K$ is an $\cat{[n]}$-indexed diagram of finite dimensional vector spaces.
That is, $H_k K(i) = H_k(K_i,\F)$ and $H_k K(i\leq j)$ is the map induced on homology by the inclusion $K_i \incl K_j$. So $H_k K \in \Vec^{\cat{[n]}}$.

We can sum homology in all degrees to get $HF \in \Vec^{\cat{[n]}}$, given by $HF(i) = \oplus_k H_k(K_i,\F)$.

\subsubsection{Sublevel sets}

Second, let $X$ be a topological space, and let $f: X \to \R$ be a not necessarily continuous real-valued function on $X$. Let $a \in \R$.
We consider the \emph{sublevel set} (or \emph{lower excursion set}, also called a \emph{half space}) 
\begin{equation*}
  f^{-1}((-\infty,a]) = \left\{ x \in X \st f(x) \leq a \right\}.
\end{equation*}
For simplicity, we will usually write $f^{-1}(-\infty,a]$.
We consider it as a topological space using the subspace topology.
Notice that if $a \leq b$ then $f^{-1}(-\infty,a] \subseteq f^{-1}(-\infty,b]$, and this inclusion is a continuous map.

This data can be assembled into an $\cat{(\R,\leq)}$-indexed diagram of topological spaces, $F \in \RTop$. For $a \in \R$, we define $F(a) = f^{-1}(-\infty,a]$. For $a \leq b$, we define $F(a\leq b)$ to be the inclusion $f^{-1}(-\infty,a] \incl f^{-1}(-\infty,b]$. It is easy to check that this defines a functor $F: \cat{(\R,\leq)} \to \Top$.

Let $H_k$ be the $k$th singular homology functor with coefficients in some field $\F$.
Then $H_kF$ is an $\cat{(\R,\leq)}$-indexed diagram of (not necessarily finite dimensional) vector spaces. That is, $H_kF(a) = H_k(f^{-1}(-\infty,a], \F)$, and for $a \leq b$, $H_kF(a\leq b)$ is the map induced on homology by the inclusion
$f^{-1}(-\infty,a] \incl f^{-1}(-\infty,b]$.
If $f$ has the property that for all $a \in \R$, $H_k(f^{-1}(-\infty,a], \F)$ is a finite dimensional vector space, then $H_kF \in \RVec$.

If $f$ has the property that for all $a \in \R$, $H_*(f^{-1}(\infty,a],\F)$ is finite-dimensional, then $HF \in \RVec$ is given by $HF(a) = \oplus_k H_k(f^{-1}(-\infty,a],\F)$.

\subsubsection{Diagrams by $\cat{[n]}$, $\cat{(\Z_+,\leq)}$, and $\cat{(\Z,\leq)}$}

In this paper we will only consider the indexing category $\cat{(\R,\leq)}$. However, this case also includes the cases $\cat{[n]}$, $\cat{(\Z_+,\leq)}$ and $\cat{(\Z,\leq)}$, by the following observation. 
Consider $F \in \Top^{\cat{[n]}}$. Then we can extend $F$ to an $\cat{(\R,\leq)}$-indexed diagram as follows.
The inclusion functor $\cat{i: [n] \to \cat{(\R,\leq)}}$ given by $\cat{i}(j) = j$ has a retraction functor $\cat{r: (\R,\leq) \to [n]}$ given by 
\begin{equation*}
\cat{r}(a) =
\begin{cases}
  0 & \text{ if } a \leq 0, \\ 
  \lfloor a \rfloor & \text{ if } 0 <  a < n\\
  n & \text{ if } a \geq n.
\end{cases}
\end{equation*}
Thus the composite functor $F\cat{r}$ is an element of $\RTop$, and $F\cat{ri} = F$.
There are similarly defined retraction functors to $\cat{(\Z_+,\leq)}$ and to $\cat{(\Z,\leq)}$.

\subsubsection{Persistent homology}

Given a diagram $F \in \RTop$, we define the \emph{$p$-persistent $k$th homology group of $F(a)$} to be the image of the map $H_kF(a \leq a+p)$.

\subsubsection{Persistence modules}
\label{sec:persistence-modules}

Diagrams in $\Vec^\cat{[n]}$, $\Vec^{(\mathbb{Z_+},\leq)}$ and $\RVec$ are often called \emph{persistence modules}.

\subsection{Abelian categories}
\label{sec:abelianCatDefs}

In this section we recall standard definitions from category theory that we will use in Section~\ref{sec:abelian}.  Details can be found in, for example, \cite{maclane:book}.   Throughout this section,  $\cat{C}$ denotes a category.

\subsubsection{Initial, Terminal, and Final Objects} We say that an object $\varnothing$ of $\cat{C}$ is \emph{initial} if, for every object $X$ in $\cat{C}$, there is a unique morphism $\varnothing \rightarrow X$.  An object $*$ is \emph{terminal} if, for every object $X$, there is a unique morphism $X \rightarrow *$.  It follows from these definitions that initial and terminal objects, if they exist, are unique up to canonical isomorphism.  If an object is both initial and terminal, we say that it is \emph{zero}, and denote it by $0$.  In the presence of a zero object, for every pair of objects $X,Y \in \cat{C}$, we can define the \emph{zero morphism} $0 : X \rightarrow Y$ to be the composite of the unique morphisms, $X\rightarrow 0 \rightarrow Y$.  It follows by uniqueness that if $f$ is any morphism, then $f0 = 0f = 0$.

\subsubsection{Monomorphisms, epimorphisms, kernels and cokernels}
Let $f : X \rightarrow Y$ be a morphism.  We say that $f$ is a \emph{monomorphism} if, whenever $g,h : W \rightarrow X$ are morphisms such that $fg=fh$, we have that $g=h$.  Dually, $f$ is an \emph{epimorphism} if, whenever $k,\ell : Y \rightarrow Z$ are morphisms such that $k f = \ell f$, then $k = \ell$.
An isomorphism class of monomorphisms to $Y$ is called a \emph{subobject} of $Y$.
Dually, isomorphism classes of epimorphisms are called \emph{quotient objects}.

Suppose that $\cat{C}$ has a zero object, $0$.  Let $f : X \rightarrow Y$ be a morphism in $\cat{C}$.  The \emph{kernel} of $f$ is the equalizer of $f$ and $0 : X \rightarrow Y$. That is, the kernel is a  morphism, $j : \ker f \rightarrow X$, such that $fj=0$ and that is ``universal'' in the sense that whenever $g : W \rightarrow X$ is a morphism satisfying $fg=0$, then there is a unique morphism $\tilde{g} : W \rightarrow \ker f$ such that $j \tilde{g} = g$. 
Since $j$ is an equalizer, it follows that $j$ is a monomorphism.
So $\ker f$ represents a subobject of $X$.
Thus the kernel is the appropriate categorical notion for the part of $X$ that $f$ sends to 0.
We use the word ``kernel'' to mean both the object $\ker f$ and the universal morphism, $\ker f \rightarrow X$, according to the context. 
We remark that it follows from the definition that all such universal objects are unique up to unique isomorphism. That is, if $g:W \to X$ and $g':W' \to X$ are both kernels of $f:X \to Y$ then there is a unique isomorphism $\tilde{g}:W \to W'$ such that $g'\tilde{g} = g$.

Dually, the \emph{cokernel} of $f : X \rightarrow Y$ is the coequalizer of $f$ and $0$. That is, the cokernel is a universal morphism $q : Y \rightarrow \coker f$, such that whenever $h : Y \rightarrow Z$ satisfies $hf=0$, there exists a unique morphism $\tilde{h} : \coker f \rightarrow Z$ such that $\tilde{h}q = h$.  Again, we sometimes abuse notation and use ``cokernel'' to refer to the object, $\coker f$.
Since $q$ is a coequalizer, it is an epimorphism, and $\coker f$ represents a quotient object of $Y$.
Again, cokernels, if they exist, are unique up to canonical isomorphism.

\subsubsection{Products, coproducts, pull-backs and push-outs}

Let $X,Y \in \cat{C}$.  The \emph{product} of $X$ and $Y$, if it exists in $\cat{C}$, is an object denoted by $X \times Y$, along with morphisms $p_{X} : X \times Y \rightarrow X$ and $p_{Y} : X \times Y \rightarrow Y$ satisfying the following universal property.
For every object $W$ together with a pair of morphisms $f_{X} : W \rightarrow X$ and $f_{Y} : W \rightarrow Y$, there is a unique morphism $f : W \rightarrow X \times Y$ such that $f_{X} = p_{X} f$ and $f_{Y} = p_{Y} f$.  The product, if it exists, is unique up to canonical isomorphism.

Dually, the \emph{coproduct} of $X$ and $Y$, if it exists in $\cat{C}$, is an object $X \oplus Y$, along with morphisms $j_{X} : X \rightarrow X \oplus Y$ and $j_{Y} : Y \rightarrow X \oplus Y$ satisfying the following universal property.
For every object $U$ together with a pair of morphisms $g_{X} : X \rightarrow U$ and $g_{Y} : Y \rightarrow U$, there is a unique morphism $g : X \oplus Y \rightarrow U$ such that $g_{X} = g j_{X}$ and $g_{Y} = g j_{Y}$.  The coproduct, if it exists, is unique up to canonical isomorphism.

Consider the diagram $X \xrightarrow{f} Z \xleftarrow{g} Y$.  The \emph{pull-back} of $f$ and $g$ consists of an object $P$, and morphisms $X \xleftarrow{p_{X}} P \xrightarrow{p_{Y}}Y$ satisfying $f p_{X} = g p_{Y}$ and the following universal property. For each diagram
\[
	\xymatrix{
		W \ar[rrd]^{h_{Y}} \ar[ddr]_{h_{X}} \ar@{.>}[dr]	\\
			& P \ar[r]_{p_{Y}} \ar[d]^{p_{X}} 	& Y \ar[d]^{g}	\\
			& X \ar[r]_{f} 		& Z  
	}
\]
where the outer paths commutes, there is a unique morphism $W \rightarrow P$ that makes the entire diagram commute.  The pull-back is unique up to canonical isomorphism, and is denoted by $P = X \times_{Z} Y$ when there can be no ambiguity concerning $f$ and $g$.

Dually, the \emph{push-out} of the diagram $Y \xleftarrow{f} X \xrightarrow{g} Z$ consists of an object $Q$ along with universal morphisms $Y \xrightarrow{j_{Y}} Q \xleftarrow{j_{Z}} Z$, satisfying $j_{Y} f = j_{Z} g$ and the following universal property. Whenever the outer paths in the diagram
\[
	\xymatrix{
		X \ar[r]^{g} \ar[d]_{f}			& Z \ar[d]_{j_{Z}} \ar[ddr]^{k_{Z}}	 \\
		Y \ar[r]^{j_{Y}} \ar[drr]_{k_{Y}}		& Q \ar@{.>}[dr] 				\\
								&				& U
	}
\]
commute, there is a unique morphism $k:Q \rightarrow U$ making the entire diagram commute.  The push-out is unique up to canonical isomorphism, and is denoted by $Q = Y \oplus_{X} Z$.

\subsubsection{Abelian categories} \label{subsubsec:abcat}
An \emph{abelian category} is a category that contains a zero object and all products and coproducts, in which every morphism has a kernel and cokernel, every monomorphism is a kernel, and every epimorphism is a cokernel.  By Freyd~\cite{freyd:book-reprinted}, 
every abelian category, $\cat{A}$, is \emph{preadditive}, that is, it is naturally enriched in abelian groups.  This means that for all pairs of objects, $X$ and $Y$, the set of morphisms $\cat{A}(X,Y)$ is an abelian group, and composition is bilinear.
Furthermore, binary products and coproducts coincide, in the sense that the natural morphism $X \oplus Y \rightarrow X \times Y$ is an isomorphism.

We say that an object $X$ of an abelian category is \emph{indecomposable} if whenever $X \cong U \oplus V$, either $U \cong 0$ or $V \cong 0$.

\begin{example}
Let $\Vec$ be the category of finite-dimensional vector spaces over some fixed field, $\F$.  The morphisms are linear transformations.  The zero object is (an element of the isomorphism class of) the trivial vector space, $0 = \{ 0 \}$.  The product of $V$ and $W$ is the Cartesian (direct) product, $V \times W$.  The coproduct is the direct sum, $V \oplus W$, which is canonically isomorphic to the direct product.  

If $f : V \rightarrow W$ is linear, we set $\ker f = \{ v \in V \mid f(v) = 0 \}$, $f(V) = \{ f(v) \mid v \in V \}$, and $\coker f = W / f(V)$.  It is a straightforward exercise to show that monomorphisms are simply injective linear transformations, and that if $f$ is injective, then  $V \cong f(V)$, and so $V$ is (isomorphic to) the kernel of the quotient map, $W \rightarrow \coker f$.  Similarly, epimorphisms are surjective linear transformations, and by the First Homomorphism Theorem, if $f : V \rightarrow W$ is  surjective, then $W$ is the cokernel of $\ker f \rightarrow V$.  Thus, $\Vec$ is an abelian category.
\end{example}


\subsection{Algebra} \label{sec:algebra}
We will need the following definitions in Lemma~\ref{lem:ZVec}, which we use in the proof of Theorem~\ref{thm:tameFiniteType}.

A (non-negatively) \emph{graded ring} is a ring, $R$, along with a direct-sum decomposition, $R = \oplus_{n=0}^{\infty} R_{n}$, such that $1 \in R_{0}$, and if $a \in R_{m}$ and $b \in R_{n}$, then $ab \in R_{m+n}$.  Our primary example  will be the polynomial ring $\F[t]$, for a field $\F$, which is graded by degree.

A \emph{graded $\F[t]$-module} is an $\F[t]$-module, $M$, with a decomposition $M = \oplus_{n=0}^{\infty} M_{n}$, that satisfies $t^{m} x \in M_{m+n}$ whenever $x \in M_{n}$.  We say that $M$ has \emph{finite type} if each $M_{n}$ is finite dimensional over $\F$.

We will also make use of the following structure theorem for finitely generated modules over a principal ideal domain.

\begin{theorem}\label{thm:struct-pid}
\cite[Theorem 6.12(ii),p. 225]{hungerford}
Let $A$ be a finitely generated module over a principal ideal domain $R$.  Then $A$ is the direct sum of a free submodule $E$ of finite rank and a finite number of cyclic torsion modules. The cyclic torsion summands (if any) are of orders $p_{1}^{s_{1}}, \ldots, p_{k}^{s_{k}}$, where $p_{1}, \ldots, p_{k}$ are (not necessarily distinct) positive integers. The rank of $E$ and the list of ideals $(p_{1}^{s_{1}}), \ldots, (p_{k}^{s_{k}})$ are uniquely determined by $A$ (except for the order of the $p_{i}$).
\end{theorem}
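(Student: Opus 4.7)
The plan is to first split $A$ into a free part and a torsion part, and then to decompose the torsion part into cyclic primary summands. I would begin by introducing the torsion submodule $T = \{a \in A \st ra = 0 \text{ for some nonzero } r \in R\}$ and observing that $A/T$ is torsion-free. Since $R$ is a PID and hence Noetherian, $A/T$ is finitely generated. A standard lemma now yields that any finitely generated torsion-free module over a PID is free of finite rank: embed $A/T$ into $(A/T) \otimes_R K$ with $K$ the fraction field of $R$, and use that every submodule of a finitely generated free module over a PID is itself free, of rank at most the ambient rank. Since $A/T$ is free, the quotient map $A \to A/T$ splits and produces the decomposition $A \isom T \oplus E$ with $E$ free of finite rank.

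Next I would decompose $T$ via primary decomposition. Noetherianness implies $T$ is finitely generated, so its annihilator is a nonzero principal ideal, generated by some $n = u p_1^{e_1} \cdots p_r^{e_r}$ with pairwise non-associate primes $p_i$. A Bezout/Chinese remainder argument then yields $T \isom \bigoplus_{i=1}^{r} T_{p_i}$, where $T_{p_i} = \{a \in T \st p_i^{e_i} a = 0\}$ is the $p_i$-primary component.

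The main obstacle is decomposing each primary component $T_p$ into a direct sum of cyclic modules $R/(p^s)$. I would argue by induction on the minimal number of generators. The core step is to pick $x \in T_p$ of maximal $p$-order $p^s$ and prove that $Rx \isom R/(p^s)$ is a direct summand of $T_p$. One applies the inductive hypothesis to $T_p/Rx$ to get a decomposition into cyclic summands, and then lifts each chosen generator back to $T_p$, adjusting by a multiple of $x$ so that orders are preserved; the adjustment succeeds precisely because $s$ was chosen maximal, which constrains the obstructions one must cancel. Assembling these decompositions across all primes yields the claimed cyclic decomposition of $T$.

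For uniqueness, the rank of $E$ equals $\dim_K (A \otimes_R K)$ and so depends only on $A$. For each prime $p$, the multiset of exponents $\{s_j\}$ on the $p$-primary side is recovered from the $R/(p)$-dimensions $d_i = \dim_{R/(p)}(p^i T_p / p^{i+1} T_p)$ for $i \geq 0$: a summand $R/(p^s)$ contributes $1$ to $d_i$ for each $i < s$ and $0$ afterwards, so the successive differences $d_i - d_{i+1}$ count exactly the number of summands with exponent $i+1$, pinning down the list of invariant factors up to reordering.
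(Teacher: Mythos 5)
The paper does not prove this statement: it is cited verbatim from Hungerford (Theorem 6.12(ii)) and invoked as a black box in the proof of Theorem~\ref{thm:tameFiniteType}, so there is no in-paper argument to compare against. Your proposal is a correct and standard proof of the structure theorem, following the classical route: split off the torsion submodule $T$ using that a finitely generated torsion-free module over a PID is free (so the sequence $0 \to T \to A \to A/T \to 0$ splits); decompose $T$ into $p$-primary components via the Chinese Remainder Theorem applied to the annihilator; decompose each $T_p$ into cyclics by induction, lifting generators of $T_p/Rx$ after subtracting suitable multiples of a maximal-order element $x$ (your remark that maximality of the order of $x$ is what makes the obstruction $a$ divisible by $p^t$ is exactly the right point); and establish uniqueness by computing the free rank as $\dim_K(A\otimes_R K)$ and recovering the exponent multiset from the differences $d_i - d_{i+1}$ with $d_i = \dim_{R/(p)}(p^iT_p/p^{i+1}T_p)$. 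One small misattribution: you invoke Noetherianity to conclude that $A/T$ is finitely generated, but quotients of finitely generated modules are always finitely generated; Noetherianity (equivalently, that submodules of finitely generated modules are finitely generated) is what you actually need a bit later, to know that the torsion submodule $T$ itself is finitely generated. Apart from that, the argument is sound, and it is essentially the argument one would find in a standard algebra text such as the cited Hungerford.
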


\section{Interleavings of diagrams}
\label{sec:interleaving}

In this section we define $\eps$-interleavings for $\cat{(\R,\leq)}$-indexed diagrams and show that they induce a metric on a set of $\cat{(\R,\leq)}$-indexed diagrams.
Our definition is a categorical version of the definition in~\cite{ccsggo:interleaving}.

\medskip

We consider the category $\cat{(\R,\leq)}$, whose objects are the real numbers and the set of morphisms from $a$ to $b$ consists of a single morphism if $a \leq b$ and is otherwise empty. 
For $b \geq 0$,
define $T_b: \cat{(\R,\leq)} \to \cat{(\R,\leq)}$ to be the functor given by $T_b(a) = a+b$,
and 
define $\eta_b:\Id_{\cat{(\R,\leq)}} \To T_b$ to be the natural transformation given by $\eta_b(a): a \leq a+b$.
Note that $T_{b}T_{c} = T_{b+c}$ and that $\eta_{b}\eta_{c} = \eta_{b+c}$.

Let $\D$ be any category and let $\eps\geq 0$.
Let $F, G \in \RD$.

\begin{definition} \label{def:interleaving}
  An \emph{$\eps$-interleaving} of $F$ and $G$ consists of natural transformations $\varphi: F \To G T_{\eps}$ and $\psi: G \To F T_{\eps}$,
i.e.
\begin{equation*}
    \xymatrix{
      \cat{(\R,\leq)} \ar[r]^{T_{\eps}} \ar[d]_F \ar@<1ex>@{}[dr]|*+{\stackrel{\mathlarger{\varphi}}{\Rightarrow}} 
& \cat{(\R,\leq)} \ar[d]_G \ar[r]^{T_{\eps}}
\ar@<1ex>@{}[dr]|*+{\stackrel{\mathlarger{\psi}}{\Rightarrow}}  
& \cat{(\R,\leq)} \ar[d]^F \\ 
   D \ar@{=}[r] & D \ar@{=}[r] & D
    }
\end{equation*}
such that 
\begin{equation} \label{eq:interleaving}
(\psi T_{\eps}) \varphi = F \eta_{2\eps} \text{ and } 
(\varphi T_{\eps}) \psi = G \eta_{2\eps}.
\end{equation}
If $(F,G,\varphi,\psi)$ is an $\eps$-interleaving, then we say that $F$ and $G$ are $\eps$-\emph{interleaved}.
\end{definition}

The existence of the natural transformations $\varphi$ and $\psi$ implies that we have the following commutative diagrams for all $a \leq b$.
\begin{equation*}
\xymatrix@R=1em@C=0.5em{F(a) \ar[rr] \ar[dr]_{\varphi(a)} & & F(b) \ar[dr]^{\varphi(b)}\\
& G(a+\eps) \ar[rr] & & G(b+\eps)
}
\quad
\xymatrix@R=1em@C=0.5em{& F(a+\eps) \ar[rr] & & F(b+\eps)\\
G(a) \ar[ur]^{\psi(a)} \ar[rr] & & G(b) \ar[ur]_{\psi(b)}
}
\end{equation*}
The identities \eqref{eq:interleaving} imply that the following diagrams commute for all $a$.
\begin{equation*}
\xymatrix@R=1em@C=0.5em{F(a) \ar[rr] \ar[dr]_{\varphi(a)} & & F(a+2\eps)\\
& G(a+\eps) \ar[ur]_{\psi(a+\eps)}
}
\quad
\xymatrix@R=1em@C=0.5em{
& F(a+\eps) \ar[dr]^{\varphi(a+\eps)}\\
G(a) \ar[rr] \ar[ur]^{\psi(a)} & & G(a+2\eps)
}
\end{equation*}

\begin{definition} \label{def:interleaving-distance}
Say that $d(F,G) \leq \eps$ if $F$ and $G$ are $\eps$-interleaved.
Explicitly, 
\begin{equation*}
  d(F,G) = \inf \{ \eps \geq 0 \st F \text{ and } G \text{ are } \eps \text{-interleaved} \},
\end{equation*}
where we set $d(F,G) = \infty$ if  $F$ and $G$ are not $\eps$-interleaved for any $\eps \geq 0$.
\end{definition}

We will show that this function $d$ is a generalized metric.
It fails to be a metric because 
it can take the value $\infty$ and $d(F,G) = 0$ does not imply that $F\isom G$.
Notice that if $F$ and $G$ are 0-interleaved, then $F \isom G$.
However $d(F,G) = 0$ only implies that $F$ and $G$ are $\eps$-interleaved for all $\eps>0$.
This does not imply that $F \isom G$. 
For an example, consider $F,G \in \RVec$ where $F=0$ and $G(a)$ is the ground field for $a=0$ but is otherwise 0.
However it does satisfy the other conditions of a metric, so it is an 
\emph{extended pseudometric}.

\begin{theorem} \label{thm:metric}
    The function $d$ defined above is an extended pseudometric on any subset of the class of $\cat{(\R,\leq)}$-indexed diagrams in $\D$.
\end{theorem}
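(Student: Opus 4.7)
The plan is to verify the three conditions of an extended pseudometric: $d(F,F)=0$, symmetry $d(F,G)=d(G,F)$, and the triangle inequality $d(F,H)\leq d(F,G)+d(G,H)$. The function may take the value $\infty$ (when no interleaving exists at any scale), and this is allowed under "extended". The symmetry condition is immediate from Definition~\ref{def:interleaving}: if $(\varphi,\psi)$ gives an $\eps$-interleaving of $F$ and $G$, then $(\psi,\varphi)$ gives one of $G$ and $F$. So the real content is reflexivity and triangle inequality.

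For reflexivity, I would show that $F$ is $\eps$-interleaved with itself for every $\eps\geq 0$ by taking $\varphi=\psi=F\eta_{\eps}:F\To FT_{\eps}$. The interleaving condition $(\psi T_{\eps})\varphi = F\eta_{2\eps}$ then reduces to the identity $(F\eta_{\eps}T_{\eps})(F\eta_{\eps}) = F\eta_{2\eps}$, which follows from the relation $\eta_b\eta_c=\eta_{b+c}$ noted just before Definition~\ref{def:interleaving}. Taking an infimum over $\eps\geq 0$ then gives $d(F,F)=0$.

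The core of the proof is the triangle inequality, which I would establish by showing that interleavings compose: if $(\varphi_1,\psi_1)$ is an $\eps_1$-interleaving of $F$ and $G$, and $(\varphi_2,\psi_2)$ is an $\eps_2$-interleaving of $G$ and $H$, then
\begin{equation*}
\varphi := (\varphi_2 T_{\eps_1})\varphi_1 : F \To HT_{\eps_1+\eps_2}, \qquad
\psi := (\psi_1 T_{\eps_2})\psi_2 : H \To FT_{\eps_1+\eps_2}
\end{equation*}
form an $(\eps_1+\eps_2)$-interleaving of $F$ and $H$. The verification is the one nontrivial calculation. Expanding $(\psi T_{\eps_1+\eps_2})\varphi$ at a point $a\in\R$ yields the composite
\begin{equation*}
\psi_1(a+\eps_1+2\eps_2)\circ \psi_2(a+\eps_1+\eps_2)\circ \varphi_2(a+\eps_1)\circ \varphi_1(a).
\end{equation*}
The middle two morphisms collapse to $G\eta_{2\eps_2}(a+\eps_1)$ by the interleaving condition $(\psi_2 T_{\eps_2})\varphi_2=G\eta_{2\eps_2}$. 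Then naturality of $\psi_1$ applied to the morphism $a+\eps_1\leq a+\eps_1+2\eps_2$ pushes $G\eta_{2\eps_2}(a+\eps_1)$ past $\psi_1(a+\eps_1+2\eps_2)$, producing $F\eta_{2\eps_2}(a+2\eps_1)\circ \psi_1(a+\eps_1)\circ \varphi_1(a)$. The remaining composition equals $F\eta_{2\eps_1}(a)$ by the condition $(\psi_1 T_{\eps_1})\varphi_1=F\eta_{2\eps_1}$, and the two $F$-morphisms compose to $F\eta_{2(\eps_1+\eps_2)}(a)$. The second identity $(\varphi T_{\eps_1+\eps_2})\psi=H\eta_{2(\eps_1+\eps_2)}$ follows by the symmetric computation.

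Once composition of interleavings is established, the triangle inequality is formal: for any $\delta>0$ pick $\eps_i\leq d(F,G)+\delta/2$ and $\eps_2\leq d(G,H)+\delta/2$ realizing interleavings (if either distance is $\infty$ the inequality is trivial), compose them to obtain an $(\eps_1+\eps_2)$-interleaving of $F$ and $H$, and let $\delta\to 0$. The main obstacle is bookkeeping in the composite interleaving calculation, since several horizontal and vertical compositions of natural transformations must be unwound simultaneously; working componentwise at $a\in\R$ keeps the argument transparent.
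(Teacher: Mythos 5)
Your proof is correct and follows essentially the same strategy as the paper: symmetry is immediate from the definition, reflexivity via the unit natural transformation, and the triangle inequality via composing interleavings using horizontal composition of natural transformations. The only cosmetic differences are that you carry out the composite-interleaving verification pointwise at $a\in\R$ (the paper pastes natural-transformation diagrams) and that you pick $\eps_1,\eps_2$ directly realizing interleavings rather than invoking Lemma~\ref{lem:biggerInterleaving} to bump up to $d(F,G)+\eps$ and $d(G,H)+\eps$.
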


To prove the theorem, we will need the following lemma, which shows that the set of $\eps$ for which two diagrams are $\eps$-interleaved form a ray. 

\begin{lemma} \label{lem:biggerInterleaving}
  If the $\cat{(\R,\leq)}$-indexed diagrams $F$ and $G$ are $\eps$-interleaved, then they are also $\eps'$-interleaved for any $\eps' \geq \eps$.
\end{lemma}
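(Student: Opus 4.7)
Set $\delta = \eps' - \eps \geq 0$. Given an $\eps$-interleaving $(\varphi,\psi)$ with $\varphi : F \To GT_\eps$ and $\psi : G \To FT_\eps$, the plan is to produce an $\eps'$-interleaving $(\varphi',\psi')$ by post-composing the components of $\varphi$ and $\psi$ with the structure maps of $G$ and $F$ that advance from index $\eps$ to index $\eps'$. Concretely, I will define $\varphi'(a) = G(a+\eps \leq a+\eps') \circ \varphi(a)$ and $\psi'(a) = F(a+\eps \leq a+\eps') \circ \psi(a)$, or equivalently in natural-transformation language $\varphi' = (G \eta_\delta T_\eps) \circ \varphi$ and $\psi' = (F \eta_\delta T_\eps) \circ \psi$. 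That $\varphi'$ and $\psi'$ are natural transformations $F \To GT_{\eps'}$ and $G \To FT_{\eps'}$ is immediate from the naturality of $\varphi$ and $\psi$ together with the functoriality of $G$ and $F$.

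The real content is to verify the two identities $(\psi' T_{\eps'})\varphi' = F\eta_{2\eps'}$ and $(\varphi' T_{\eps'})\psi' = G\eta_{2\eps'}$. I will do the first one pointwise at $a \in \R$; the second follows by the same argument with the roles of $F,G$ and $\varphi,\psi$ swapped. Unwinding, $(\psi' T_{\eps'})(a) \circ \varphi'(a)$ equals
\begin{equation*}
F(a+\eps+\eps' \leq a+2\eps') \circ \psi(a+\eps') \circ G(a+\eps \leq a+\eps') \circ \varphi(a).
\end{equation*}
Naturality of $\psi : G \To FT_\eps$ applied to the morphism $a+\eps \leq a+\eps'$ rewrites $\psi(a+\eps') \circ G(a+\eps \leq a+\eps')$ as $F(a+2\eps \leq a+\eps+\eps') \circ \psi(a+\eps)$. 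The functoriality of $F$ then collapses the two consecutive $F$-maps into $F(a+2\eps \leq a+2\eps')$, so the composite becomes $F(a+2\eps \leq a+2\eps') \circ \psi(a+\eps) \circ \varphi(a)$. Applying the original interleaving identity $(\psi T_\eps)\varphi = F\eta_{2\eps}$ at $a$ replaces $\psi(a+\eps)\circ\varphi(a)$ with $F(a \leq a+2\eps)$, and one last use of functoriality of $F$ yields $F(a \leq a+2\eps') = (F\eta_{2\eps'})(a)$, as required.

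The argument is essentially formal: everything reduces to one application of naturality and two applications of functoriality around the original interleaving identity. The only "hard" part is bookkeeping the indices carefully and keeping straight which of $F,G,\varphi,\psi,T_\bullet,\eta_\bullet$ is being horizontally composed with what; if the computation is presented pointwise as above, no subtlety remains.
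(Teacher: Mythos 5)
Your proof is correct and is essentially the paper's own argument: both define $\hat{\varphi} = (G\eta_{\eps'-\eps}T_\eps)\circ\varphi$ and $\hat{\psi} = (F\eta_{\eps'-\eps}T_\eps)\circ\psi$, and both verify the interleaving identity by one application of naturality of $\psi$ followed by functoriality of $F$ and the original identity $(\psi T_\eps)\varphi = F\eta_{2\eps}$. The only difference is presentational — you unwind the computation pointwise at $a$, while the paper displays the same steps as a single commutative diagram.
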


\begin{proof}
  Let $\varphi: F \To GT_{\eps}$ and $\psi: G \To FT_{\eps}$ such that $(\psi T_{\eps}) \varphi = F \eta_{2\eps}$ and $(\varphi T_{\eps}) \psi = G \eta_{2\eps}$.

Let $\eps' \geq \eps$ and set $\bar{\eps} = \eps' - \eps$.  Recall that we have the natural transformation, $\eta_{\bar{\eps}}: \Id_{\cat{(\R,\leq)}} \To T_{\bar{\eps}}$, and thus $\eta_{\bar{\eps}} T_{\eps}: T_{\eps} \To T_{\bar{\eps}} T_{\eps} = T_{\eps'}$. 
  Therefore $G\eta_{\bar{\eps}} T_{\eps}: GT_{\eps} \To GT_{\eps'}$.
  Define $\hat{\varphi} = (G \eta_{\bar{\eps}} T_{\eps}) \varphi$.
  For example,
  \begin{equation*}
    \hat{\varphi}(a): F(a) \xto{\varphi(a)} G(a+\eps) \xto{G\eta_{\bar{\eps}} T_{\eps} (a)} G(a + \eps').
  \end{equation*}
Similarly, define $\hat{\psi} = (F \eta_{\bar{\eps}} T_{\eps}) \psi$.

We see that $(\hat{\psi} T_{\eps'}) \hat{\varphi} = F \eta_{2\eps'}$ from the following commutative diagram.
\begin{equation*}
  \xymatrix@C=0.5em{
    F(a) \ar[dr]_{\varphi a} \ar[rr]^{F\eta_{2\eps}a} & & F(a+2\eps) \ar[rr]^{F\eta_{\bar{\eps}}T_{2\eps}a} & & F(a+\eps'+\eps) \ar[rr]^{F \eta_{\bar{\eps}} T_{\eps+\eps'}a} & & F(a+2\eps') \\
  & G(a+\eps) \ar[ur]_{\psi T_{\eps} a} \ar[rr]_{G \eta_{\bar{\eps}} T_{\eps} a} & & G(a+\eps') \ar[ur]_{\psi T_{\eps'} a}
  }
\end{equation*}
Similarly, one can check that $(\hat{\varphi} T_{\eps'}) \hat{\psi} = G \eta_{2\eps'}$.
\end{proof}

\begin{proof}[Proof of Theorem~\ref{thm:metric}]
  The identity natural transformation shows that $d(F,F) = 0$ for any diagram $F$.
By the symmetry of the definition of $\eps$-interleaving, we see that $d(F,G)=d(G,F)$ for any diagrams $F$ and $G$.
It remains to show the triangle inequality.

Consider diagrams $F$, $G$, and $H$.
Let $a = d(F,G)$ and $b = d(G,H)$. Let $\eps>0$.
Then by Lemma~\ref{lem:biggerInterleaving} and the definition of infimum, $F$ and $G$ are $(a+\eps)$-interleaved and $G$ and $H$ are $(b+\eps)$-interleaved.
Let $\varphi': F \To G T_{a+\eps}$ and $\psi': G \To F T_{a+\eps}$
and $\varphi'': G \To H T_{b+\eps}$ and $\psi'': H \To G T_{b+\eps}$
be the corresponding natural transformations.
We will show that composing these natural transformations gives the desired natural transformations for an interleaving of $F$ and $H$.

Let $\varphi = (\varphi'' T_{a + \eps}) \varphi': F \To H T_{b+\eps} T_{a+\eps} = H T_{a+b+2\eps}$ and $\psi = (\psi' T_{b+\eps}) \psi'': H \To F T_{a+\eps} T_{b+\eps} = F T_{a+b+2\eps}$.
The first composition comes from the following diagram. The second is similar.
\begin{equation*}
    \xymatrix{
      \cat{(\R,\leq)} \ar[r]^{T_{a+\eps}} \ar[d]_F  \ar@<1ex>@{}[dr]|*+{\stackrel{\mathlarger{\varphi'}}{\Rightarrow}} 
& \cat{(\R,\leq)} \ar[d]_G \ar[r]^{T_{b+\eps}} \ar@<1ex>@{}[dr]|*+{\stackrel{\mathlarger{\varphi''}}{\Rightarrow}} 
& \cat{(\R,\leq)} \ar[d]^H \\ 
D \ar@{=}[r] & D \ar@{=}[r] & D
    }
\end{equation*}

We claim that $(\psi T_{a+b+2\eps}) \varphi = F \eta_{2(a+b+2\eps)}$ and $(\varphi T_{a+b+2\eps}) \psi = H \eta_{2(a+b+2\eps)}$.
The first identity comes from the following diagram. The second is similar.
\begin{equation*}
    \xymatrix{
      \cat{(\R,\leq)} \ar[r]^{T_{a+\eps}} \ar[d]_F 
      \ar@<1ex>@{}[dr]|*+{\stackrel{\mathlarger{\varphi'}}{\Rightarrow}} &
   \cat{(\R,\leq)} \ar[r]^{T_{b+\eps}} \ar[d]_G      
      \ar@<1ex>@{}[dr]|*+{\stackrel{\mathlarger{\varphi''}}{\Rightarrow}} &
     \cat{(\R,\leq)} \ar[d]_H \ar[r]^{T_{b+\eps}}       
      \ar@<1ex>@{}[dr]|*+{\stackrel{\mathlarger{\psi''}}{\Rightarrow}} &
     \cat{(\R,\leq)} \ar[d]^G \ar[r]^{T_{a+\eps}}       
      \ar@<1ex>@{}[dr]|*+{\stackrel{\mathlarger{\psi'}}{\Rightarrow}} &
     \cat{(\R,\leq)} \ar[d]^F \\ 
   D \ar@{=}[r] & D \ar@{=}[r] & D \ar@{=}[r] & D \ar@{=}[r] & D
    }
\end{equation*}

Thus $F$ and $H$ are $(a+b+2\eps)$-interleaved for all $\eps > 0$.
Therefore $d(F,H) \leq a+b$.
\end{proof}

Let us declare $F$ equivalent to $G$ if $d(F,G)=0$; this is an equivalence relation, and we obtain the following corollary.

\begin{corollary} \label{cor:metric}
  If we identify diagrams whose interleaving distance is $0$, then $d$ is an extended metric on this set of equivalence classes.
\end{corollary}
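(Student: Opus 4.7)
The plan is to deduce the corollary from Theorem~\ref{thm:metric} by a standard quotient construction, since the only axiom of an extended metric that $d$ may fail to satisfy as defined on diagrams is the ``separation'' axiom $d(F,G) = 0 \To F = G$.

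First I would verify that the relation $F \sim G \iff d(F,G) = 0$ is indeed an equivalence relation on any given set $\mathcal{S}$ of $\cat{(\R,\leq)}$-indexed diagrams in $\D$. Reflexivity is immediate from $d(F,F) = 0$, proved in Theorem~\ref{thm:metric} via the identity natural transformation. Symmetry follows from $d(F,G) = d(G,F)$, also established there. For transitivity, if $d(F,G) = 0$ and $d(G,H) = 0$, then the triangle inequality of Theorem~\ref{thm:metric} gives $0 \leq d(F,H) \leq d(F,G) + d(G,H) = 0$, whence $F \sim H$.

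Next I would show that $d$ descends to a well-defined function $\bar{d}$ on the quotient $\mathcal{S}/{\sim}$. Given $F \sim F'$ and $G \sim G'$, two applications of the triangle inequality yield
\begin{equation*}
d(F',G') \leq d(F',F) + d(F,G) + d(G,G') = d(F,G),
\end{equation*}
and symmetrically $d(F,G) \leq d(F',G')$, so $d(F,G) = d(F',G')$. Hence $\bar{d}([F],[G]) := d(F,G)$ is well-defined.

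Finally, $\bar{d}$ inherits reflexivity, symmetry, and the triangle inequality from $d$ (since these are expressed by equalities and inequalities that pass straightforwardly to representatives). By the very construction of the equivalence relation, $\bar{d}([F],[G]) = 0$ holds precisely when $F \sim G$, i.e.\ when $[F] = [G]$, which gives the separation axiom. Thus $\bar{d}$ is an extended metric on $\mathcal{S}/{\sim}$. There is no substantive obstacle here beyond the routine verification that identifying points at distance zero in a pseudometric space always produces a metric space; the content of the corollary is really just that Theorem~\ref{thm:metric}'s pseudometric is not quite a metric because of examples like the one exhibited before the theorem statement.
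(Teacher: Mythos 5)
Your proof is correct and is precisely the standard argument the paper leaves implicit: the paper simply remarks before the corollary that declaring $F \sim G$ when $d(F,G)=0$ is an equivalence relation and states the corollary without further proof, so your explicit verification of the equivalence relation, the well-definedness of $\bar{d}$ on the quotient via two applications of the triangle inequality, and the separation axiom fills in exactly the routine details the authors intended.
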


One of the mostly useful aspects of the categorical view of interleavings is that if we apply a functor to $\eps$-interleaved diagrams, then the resulting diagrams are also $\eps$-interleaved. That is,

  \begin{proposition} \label{prop:functoriality}
    Let $F,G: \cat{(\R,\leq)} \to \D$ and $H:\D \to \E$. 
    If $F$ and $G$ are $\eps$-interleaved, then so are $HF$ and $HG$. Thus,
    \begin{equation*}
      d(HF, HG ) \leq d(F,G).
    \end{equation*}
  \end{proposition}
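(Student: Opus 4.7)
The plan is to build an $\eps$-interleaving of $HF$ and $HG$ directly from a given $\eps$-interleaving of $F$ and $G$ by horizontal composition with $H$. Suppose $(\varphi, \psi)$ is an $\eps$-interleaving, so that $\varphi : F \To GT_{\eps}$ and $\psi : G \To FT_{\eps}$ satisfy the identities in \eqref{eq:interleaving}. Applying $H$ (horizontally) yields natural transformations $H\varphi$ and $H\psi$, and the first observation I would record is that composition of functors is strictly associative, so $H(GT_{\eps}) = (HG)T_{\eps}$ and $H(FT_{\eps}) = (HF)T_{\eps}$. Therefore the horizontal composites may be regarded as
\begin{equation*}
H\varphi : HF \To (HG)T_{\eps}, \qquad H\psi : HG \To (HF)T_{\eps},
\end{equation*}
which are exactly the kinds of natural transformations required to interleave $HF$ and $HG$ at scale $\eps$.

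Next I would verify the two interleaving identities. Evaluating $\big((H\psi)T_{\eps}\big) \cdot (H\varphi)$ at $a \in \R$ gives the composite $H(\psi_{a+\eps}) \circ H(\varphi_{a})$, which by functoriality of $H$ equals $H(\psi_{a+\eps} \circ \varphi_{a})$. By the hypothesis $(\psi T_{\eps})\varphi = F\eta_{2\eps}$, this is $H\big(F(\eta_{2\eps}(a))\big) = (HF)\eta_{2\eps}(a)$. Thus $\big((H\psi)T_{\eps}\big)(H\varphi) = (HF)\eta_{2\eps}$, and the symmetric identity $\big((H\varphi)T_{\eps}\big)(H\psi) = (HG)\eta_{2\eps}$ follows in exactly the same way. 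This shows that $HF$ and $HG$ are $\eps$-interleaved.

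Finally, to conclude the inequality $d(HF, HG) \leq d(F, G)$, I would invoke the definition of $d$ as an infimum (Definition~\ref{def:interleaving-distance}). If $d(F,G) = \infty$ there is nothing to prove. Otherwise, for every $\eps > d(F,G)$ Lemma~\ref{lem:biggerInterleaving} and the definition of infimum give an $\eps$-interleaving of $F$ and $G$, hence, by the construction above, of $HF$ and $HG$. Taking the infimum over such $\eps$ yields $d(HF, HG) \leq d(F, G)$.

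There is really no substantive obstacle here; the entire argument is formal and exploits only two facts: functoriality of $H$ (so that $H$ preserves the composition encoding the interleaving identities) and the strict associativity of functor composition (so that horizontal composition with $H$ lands in the right functor category). The only bookkeeping to be careful about is keeping track of whether horizontal composites are written on the left or the right of $H$, since that is what matches the shape $GT_{\eps}$ needed for the target of $\varphi$ and $\psi$.
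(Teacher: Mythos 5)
Your proof is correct and follows essentially the same route as the paper's: horizontally compose the interleaving natural transformations $\varphi,\psi$ with $H$, use functoriality of $H$ together with strict associativity of functor composition to see that $H\varphi$ and $H\psi$ have the right shape, and then check the two interleaving identities pointwise. The only difference is cosmetic --- the paper records the identity check via a pasting diagram, while you spell it out componentwise and also make the final infimum argument explicit.
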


 \begin{proof}
    Assume $F$ and $G$ are $\eps$-interleaved. Let $\varphi: F \To G T_{\eps}$, $\psi: G \To F T_{\eps}$ be the corresponding natural transformations. Then by functoriality, $H \varphi: HF \To HGT_{\eps}$ and $H \psi: HG \To HFT_{\eps}$, and 
$(H\psi T_{\eps})(H\varphi) = (HF) \eta_{2\eps}$ and $(H\varphi T_{\eps})(H\psi) = (HG) \eta_{2\eps}$, as pictured in the following diagram.
\begin{equation*}
    \xymatrix{
      \cat{(\R,\leq)} \ar[r]^{T_{\eps}} \ar[d]_F             \ar@<1ex>@{}[dr]|*+{\stackrel{\mathlarger{\varphi}}{\Rightarrow}} 
& \cat{(\R,\leq)} \ar[d]_G \ar[r]^{T_{\eps}}       \ar@<1ex>@{}[dr]|*+{\stackrel{\mathlarger{\psi}}{\Rightarrow}} 
 &  \cat{(\R,\leq)} \ar[d]^F \\ 
   D \ar[d]_H \ar@{=}[r] \ar@{}[dr]|*+{=} 
&   D \ar[d]_H \ar@{=}[r] \ar@{}[dr]|*+{=} 
& D\ar[d]_H \\
   E \ar@{=}[r] &    E \ar@{=}[r] & E
   }
\end{equation*}
Therefore $HF$ and $HG$ are $\eps$-interleaved.
\end{proof}

\section{Diagrams of vector spaces}
\label{sec:vec}

From our categorical point of view, persistent homology calculations are done on diagrams in the category $\Vec$ of finite-dimensional vector spaces over a fixed ground field $\F$.
In this section we study $(\R,\leq)$-indexed diagrams in $\Vec$, and define some of the usual characters in topological persistence in this setting: barcodes, persistence diagrams, and the bottleneck distance.
Our main result is an isometric embedding of the set of finite barcodes with the bottleneck distance into the set of objects of $\RVec$ with the interleaving distance.

\medskip

The category $\Vec$ is one of the motivating examples of an abelian category. If the target category in a diagram category is an abelian category, then the diagram category inherits this structure. The necessary constructions are done objectwise. In particular, $\RVec$ is an abelian category.

\subsection{Finite type diagrams}

In this section we define \emph{finite type} and \emph{tame} diagrams in $\RVec$ and show that the two conditions are equivalent.
As a corollary, we obtain a Krull-Schmidt theorem.

\begin{definition} \label{def:finiteType}
  Given an interval $I \subseteq \R$, define the diagram $\chi_I \in \RVec$ by
  \[ \chi_I(a) =\begin{cases} \F& \text{if $a \in I$},\\ 0& \text{otherwise},
  \end{cases} \quad \quad \quad
      \chi_I(a\leq b) =\begin{cases} \Id_{\F}& \text{if $a,b \in I$},\\ 0& \text{otherwise}.
  \end{cases} 
  \]
  Say that a diagram $F \in \RVec$ has \emph{finite type} if $F \isom \oplus_{k=1}^N \chi_{I_k}$.
\end{definition}

We remark that $\chi_{\R}$ and $\chi_{\emptyset}$ are the constant functors $\F$ and $0$ respectively.

\begin{lemma}
  For an interval $I \subseteq \R$, the diagram $\chi_I$ is indecomposable.
\end{lemma}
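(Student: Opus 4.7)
The plan is to show that any direct sum decomposition $\chi_I \cong U \oplus V$ in $\RVec$ forces $U \cong 0$ or $V \cong 0$, by analyzing the decomposition pointwise and then using naturality of the structure maps.

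First I would fix an isomorphism $\chi_I \cong U \oplus V$ in $\RVec$, so that objectwise $U(a) \oplus V(a) \cong \chi_I(a)$, and for every $a \leq b$ the structure map $\chi_I(a \leq b)$ corresponds to the block-diagonal morphism $U(a \leq b) \oplus V(a \leq b)$. Since $\chi_I(a) = 0$ for $a \notin I$ and $\chi_I(a) = \F$ for $a \in I$, pointwise dimension counting shows that $U(a) = V(a) = 0$ for $a \notin I$, and that for each $a \in I$ exactly one of $U(a), V(a)$ equals $\F$ and the other equals $0$. This yields a partition $I = I_U \sqcup I_V$ with $I_U = \{a \in I : U(a) \cong \F\}$ and $I_V = \{a \in I : V(a) \cong \F\}$.

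Next, I would extract the contradiction from the transition maps on $I$. For $a \leq b$ in $I$, $\chi_I(a \leq b) = \Id_{\F}$ is nonzero. On the other hand, if $a \in I_U$ and $b \in I_V$, then $U(b) = 0$ forces $U(a \leq b) = 0$, and $V(a) = 0$ forces $V(a \leq b) = 0$; the block-diagonal sum is therefore zero, which is incompatible with $\Id_{\F}$. The symmetric case $a \in I_V$, $b \in I_U$ is excluded the same way. Because $I$ is an interval (hence totally ordered and, crucially, any two elements are comparable), the existence of one element in $I_U$ and one in $I_V$ would produce such a comparable pair, giving a contradiction. Thus one of $I_U, I_V$ must be empty, which together with the vanishing outside $I$ yields $U \cong 0$ or $V \cong 0$.

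The argument is essentially a bookkeeping exercise; there is no real obstacle. The one point to handle carefully is the passage from the abstract isomorphism $\chi_I \cong U \oplus V$ of functors to the concrete pointwise block-diagonal form of the transition maps — that is, making explicit that natural transformations to and from $U \oplus V$ decompose componentwise. Once that is set up, the contradiction argument above concludes the proof.
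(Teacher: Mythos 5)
Your proof is correct and uses the same essential mechanism as the paper's: pointwise block-diagonal analysis of the transition maps together with the fact that $\chi_I(a \leq b) = \Id_{\F}$ for $a \leq b$ in $I$. The only difference is packaging: you partition $I$ into $I_U \sqcup I_V$ and derive a contradiction if both are nonempty, whereas the paper fixes one $a \in I$ with (WLOG) $P(a)\cong\F$ and propagates this both forward and backward along $I$ to conclude directly that $P \cong \chi_I$ and $Q = 0$.
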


\begin{proof}
  Assume that $\chi_I \isom P \oplus Q$. 
  If there is some $c \notin I$, then $P(c) \oplus Q(c) \isom \chi_I(c) = 0$, and therefore $P(c) = Q(c) = 0$.

  Let $a \in I$. Then $P(a) \oplus Q(a) \isom \chi_I(a) \isom \F$.
  Without loss of generality, assume that $P(a) \isom \F$ and $Q(a) = 0$.
  Let $a \leq b \in I$.
  Since $Q(a) = 0$, $Q(a\leq b) = 0$. 
  Thus it follows from $P(a\leq b) \oplus Q(a\leq b) = (P\oplus Q)(a\leq b) \isom \chi_I(a\leq b) = \Id_{\F}$, that $P(a\leq b) \isom \Id_{\F}$.
  Hence from $P(b) \oplus Q(b) \isom \chi_I(b) \isom \F$ we get that $P(b) \isom \F$ and $Q(b) = 0$. Similarly, if $d \leq a \in I$, we get that $Q(d\leq a) = 0$, $P(d\leq a) \isom \Id_{\F}$, $P(d) \isom \F$ and $Q(d)=0$.

We have shown that $P \isom \chi_I$ and $Q = 0$.
Therefore $\chi_I$ is indecomposable.
\end{proof}

The following definitions are variations of those in~\cite{cseh:stability}.

\begin{definition}
  Let $F \in \RVec$. Let $I \subseteq \R$ be an interval.
  Say that $F$ is \emph{constant on $I$}, if for all $a \leq b \in I$, $F(a\leq b)$ is an isomorphism.
Call $a \in \R$ a \emph{regular value} of $F$, if there is some open interval $I \ni a$ such that $F$ is constant on $I$.
Otherwise call $a$ a \emph{critical value} of $F$.\footnote{Even if $F$ is induced by sublevel sets, it is inadequate to define $a \in \R$ to be a critical value of $F$ 
 if for all sufficiently small $\eps > 0$ the map $F(a-\eps \leq a+\eps)$ is not an isomorphism~\cite{cseh:stability}.
Consider the example $X = \{(x,y) \in \R^2 \st 0\leq x \leq 1, \ 0 < y \leq 1\}$ and $f(x,y)$ equals 0 if $x=0$, $-1$ if $x=1$, and is otherwise equal to $y$.
Then 0 is not a critical value under this stricter definition, but the map $H_0(f^{-1}(-\infty,0]) \to H_0(f^{-1}(-\infty,1])$ induced by inclusion is not an isomorphism, contradicting the Critical Value Lemma.}
Call $F$ \emph{tame} if it has a finite number of critical values.
\end{definition}

\begin{lemma}[Critical Value Lemma] \label{lem:regularInterval}
  If an interval $I$ does not contain any critical values of $F$, then $F$ is constant on $I$.
\end{lemma}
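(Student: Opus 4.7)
The plan is to reduce the global statement to a local one via the compactness of closed intervals in $\R$. Fix $a \leq b$ with $a, b \in I$. Since $I$ is an interval (hence convex), the entire closed interval $[a,b]$ is contained in $I$, and therefore every point of $[a,b]$ is a regular value of $F$. The goal is then to show that $F(a \leq b)$ is an isomorphism; doing this for each such pair yields that $F$ is constant on $I$.

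For each $t \in [a,b]$, pick an open interval $U_t \ni t$ on which $F$ is constant, as supplied by the definition of regular value. The family $\{U_t\}_{t \in [a,b]}$ is an open cover of the compact set $[a,b]$, so extract a finite subcover and reindex its members as open intervals $V_1, \ldots, V_n$ in order of increasing left endpoint. Because $[a,b]$ is connected and covered by the $V_i$, consecutive intervals in this ordering must overlap, so one can choose a chain of points
\[
a = s_0 < s_1 < s_2 < \cdots < s_{m-1} < s_m = b
\]
with the property that each consecutive pair $\{s_{i-1}, s_i\}$ lies inside a single $V_{j(i)}$. This is the only real piece of bookkeeping in the argument; the choice is made by successively picking $s_i$ in the intersection $V_{j(i)} \cap V_{j(i)+1}$ after making sure the union of the first few $V$'s reaches past $s_i$.

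By the assumption that $F$ is constant on each $V_{j(i)}$, each map $F(s_{i-1} \leq s_i)$ is an isomorphism. Functoriality of $F : \cat{(\R,\leq)} \to \Vec$ gives
\[
F(a \leq b) \;=\; F(s_{m-1} \leq s_m) \circ \cdots \circ F(s_0 \leq s_1),
\]
a composition of isomorphisms, hence itself an isomorphism. This proves that $F$ is constant on $I$.

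There is no serious obstacle here; the only point that requires a bit of care is arranging the finite chain $s_0 < \cdots < s_m$ so that consecutive pairs sit inside a common interval from the finite subcover. Everything else is formal: compactness, connectedness, and functoriality.
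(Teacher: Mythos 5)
Your proof is correct and follows essentially the same approach as the paper: cover $[a,b]$ by open intervals of constancy, extract a finite subcover by compactness, choose a chain of intermediate points with consecutive points sharing a cover element, and compose isomorphisms using functoriality. The paper's proof is slightly terser in the bookkeeping of the chain but is otherwise identical.
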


\begin{proof}
  Let $a \leq b \in I$. By assumption, for all $c \in [a,b]$, there exists an interval $I_c \ni c$ such that $F$ is constant on $I_c$. Since $[a,b]$ is compact, the cover $\{I_c \st c \in [a,b]\}$ has a finite subcover $\{I_{c_1}, \ldots, I_{c_n}\}$.
Choose a sequence $a = d_0 \leq d_1 \leq \cdots \leq d_{m+1} = b$ such that for all $0 \leq k \leq m$, $d_k,d_{k+1} \in I_{c_j}$ for some $1\leq j \leq n$.
Then $F(d_k \leq d_{k+1})$ is an isomorphism for $0 \leq k \leq m$ and thus $F(a\leq b)$ is an isomorphism.
\end{proof}

We will need the following lemma in the proof of Theorem~\ref{thm:tameFiniteType}.
We refer the reader to Section~\ref{sec:algebra} for the definition of a finite type graded $\F[t]$-module. 

\begin{lemma} \label{lem:ZVec}
  The category $\ZVec$ is isomorphic to the category of finite type graded $\F[t]$-modules.
\end{lemma}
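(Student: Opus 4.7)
The plan is to exhibit mutually inverse functors $\Phi$ and $\Psi$ between $\ZVec$ and the category of finite type graded $\F[t]$-modules, which identifies a $(\Z,\leq)$-indexed diagram of vector spaces with the data of a graded vector space together with a degree-one endomorphism.

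On objects, I would define $\Phi$ by sending a diagram $F \in \ZVec$ to the graded vector space
\[
\Phi(F) = \bigoplus_n F(n),
\]
with $F(n)$ placed in degree $n$, equipped with the $\F[t]$-module structure specified on homogeneous elements by $t \cdot v = F(n \leq n+1)(v)$ for $v \in F(n)$. Conversely, $\Psi$ sends a graded $\F[t]$-module $M = \bigoplus_n M_n$ to the diagram $\Psi(M)$ with $\Psi(M)(n) = M_n$ and $\Psi(M)(n \leq m)$ equal to multiplication by $t^{m-n}$. Functoriality of $\Psi(M)$ follows from $t^{m-n} = t^{m-k} t^{k-n}$, and the finite-dimensionality of each $F(n)$ matches the finite type condition on graded modules on the nose.

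For the morphism correspondence, a natural transformation $\eta: F \To G$ is a family of linear maps $\eta_n: F(n) \to G(n)$ intertwining the unit transitions $F(n \leq n+1)$ and $G(n \leq n+1)$; the direct sum $\bigoplus_n \eta_n$ is therefore $\F[t]$-linear. In the reverse direction, a graded $\F[t]$-module homomorphism decomposes into its degree components, each of which commutes with multiplication by $t$, yielding a natural transformation. These assignments are manifestly mutually inverse on Hom-sets and compatible with composition.

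Finally, verifying $\Phi\Psi = \Id$ and $\Psi\Phi = \Id$ on objects reduces to the observation that in the totally ordered poset $(\Z, \leq)$ any arrow $n \leq m$ factors uniquely as $(m-1 \leq m) \circ \cdots \circ (n \leq n+1)$, so by functoriality the transition map $F(n \leq m)$ is determined by the unit transitions; this factorization corresponds under $\Phi$ to multiplication by $t^{m-n}$, which $\Psi$ recovers as the original transition map. I do not anticipate a substantive obstacle here: the proof is essentially a translation between two equivalent presentations of the same structure. The only care needed is ensuring the finite-dimensionality condition on each piece of $\ZVec$ and the finite type condition on graded modules align correctly, which is immediate from the definitions.
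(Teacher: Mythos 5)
Your proposal is correct and takes essentially the same approach as the paper: exhibit mutually inverse functors sending a diagram $F$ to the graded module $\bigoplus_n F(n)$ with $t$-action given by the unit transition $F(n\leq n+1)$, and a graded module $M$ to the diagram with $\Psi(M)(n\leq m)$ equal to multiplication by $t^{m-n}$. The paper's proof is terser; you supply the extra observation (correctly) that functoriality determines $F(n\leq m)$ from the unit transitions because $(\Z,\leq)$ is totally ordered, but the construction is identical.
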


\begin{proof}
  To each diagram $F \in \ZVec$, we can assign the finite type graded $\F[t]$-module $M$, where for $k \in \Z$, $M_k = F(k)$ and for $a \in M_k$, $t \cdot a = F(k\leq k+1)(a)$.

  To each finite type graded $\F[t]$-module $M$, we can assign the diagram $F \in \ZVec$ given by $F(k) = M_k$ and whose morphisms are generated by $F(k \leq k+1)(a) = t\cdot a$ for $a \in F(k)$.

Both composites of these two functors are equal to the identity functor.
\end{proof}


\begin{theorem}
  \label{thm:tameFiniteType}
  A diagram in $\RVec$ is tame if and only if it has finite type.
\end{theorem}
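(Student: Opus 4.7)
For the forward implication, observe that each interval module $\chi_I$ is constant on every interval of $\R$ disjoint from the (at most two) endpoints of $I$, so its critical values lie in those endpoints. For $F = \bigoplus_{k=1}^N \chi_{I_k}$, constancy of $F$ on an interval is detected summandwise, so the critical values of $F$ are contained in the finite set of endpoints of the $I_k$, and $F$ is tame.

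For the reverse implication, suppose $F$ has critical values $c_1 < \cdots < c_n$, and pick regular values $b_0 < c_1 < b_1 < c_2 < \cdots < c_n < b_n$. Define the order-preserving functors $i\colon\cat{[2n]} \to \cat{(\R,\leq)}$ by $i(2k)=b_k$ and $i(2k-1)=c_k$, and $r\colon\cat{(\R,\leq)} \to \cat{[2n]}$ by $r(c_k)=2k-1$ and $r(a)=2k$ for $a\in(c_k,c_{k+1})$, with the conventions $c_0=-\infty$, $c_{n+1}=+\infty$. Then $ri = \Id$, and we set $G = Fi \in \Vec^{\cat{[2n]}}$.

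The first key step is to produce a natural isomorphism $F \isom Gr$. Each fiber of $r$ is either a singleton $\{c_k\}$ or a maximal interval of regular values, and in either case Lemma~\ref{lem:regularInterval} (trivially for singletons) implies that $F$ sends every morphism within the fiber to an isomorphism. Since $a$ and $ir(a)$ always lie in the same fiber, one defines the component $\eta_a\colon F(ir(a))\to F(a)$ to be $F(ir(a)\leq a)$ when $ir(a)\leq a$ and $F(a\leq ir(a))^{-1}$ otherwise. A short case analysis using functoriality of $F$ shows that the $\eta_a$ assemble into a natural transformation, and each is an isomorphism by construction.

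The second key step is to decompose $G$ as a direct sum of interval modules in $\Vec^{\cat{[2n]}}$. Extend $G$ to $\tilde G \in \ZVec$ by setting $\tilde G(j) = 0$ for $j<0$ and $\tilde G(j) = G(2n)$ with identity structure maps for $j>2n$. Under the identification of Lemma~\ref{lem:ZVec}, $\tilde G$ becomes a graded $\F[t]$-module $M$ which is finitely generated: it vanishes below degree zero, each graded piece is finite-dimensional, and $t$ acts as the identity beyond degree $2n$. The graded form of Theorem~\ref{thm:struct-pid} then decomposes $M$ into shifted copies of $\F[t]$ and $\F[t]/(t^s)$, which correspond under Lemma~\ref{lem:ZVec} to interval modules $\chi_J$ in $\ZVec$. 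Restricting to $\cat{[2n]}$ yields $G\isom \bigoplus_k \chi_{J_k}$, whence $F \isom Gr \isom \bigoplus_k \chi_{r^{-1}(J_k)}$, and each $r^{-1}(J_k)$ is an interval in $\R$ because $r$ is order-preserving. The main obstacle is extracting the decomposition of $M$ in graded form; this can be handled by choosing homogeneous generators and relations in the standard structure-theorem argument, or by directly adapting the Zomorodian--Carlsson classification of finitely generated graded $\F[t]$-modules.
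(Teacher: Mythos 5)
Your proof is correct and follows essentially the same route as the paper: the forward direction by reading off endpoints as critical values; the backward direction by choosing regular values to define the retraction pair $i,r$, using the Critical Value Lemma to obtain a natural isomorphism $F \isom (ir)^*F$, passing through $\ZVec$ and Lemma~\ref{lem:ZVec} to the category of graded $\F[t]$-modules, invoking the (graded form of the) structure theorem, and pulling the resulting interval summands back along $r$. The only differences are cosmetic: you extend $i^*F$ to $\Z$ by zero on the left rather than by the constant $(i^*F)(0)$ (which is arguably cleaner for the finite-generation claim), you spell out the components of the natural isomorphism $F \isom Gr$ and check naturality explicitly where the paper merely asserts it, and you express the final step as $r^{-1}(J_k)$ being an interval rather than tabulating $r^*\chi_{[k,\ell)}$ case by case. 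Both proofs implicitly rely on the graded version of the PID structure theorem while citing the ungraded one; you at least flag this gap, which is a small improvement in honesty, but otherwise the substance is identical.
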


\begin{proof}
To prove the `if' statement, we consider an interval $I \subseteq \R$. By definition, $a \in \R$ is a critical value of $\chi_I$ if and only if $a$ is an endpoint of $I$.
Let $F \in \RVec$ such that $F \isom \oplus_{k=1}^N \chi_{I_k}$. 
Then $a \in \R$ is a critical value of $F$ if and only if it is an endpoint of one of the intervals $I_k$, and so $F$ is tame.

The remainder of the proof is devoted to establishing the `only if' statement. Assume $F \in \RVec$ has critical values $a_1 < a_2 < \cdots < a_n$.
Choose $b_0,\ldots, b_n$ such that $b_0 < a_1$, for $k \in \{1,\ldots,n-1\}$, $a_k < b_k < a_{k+1}$ and $a_n < b_n$. For convenience, set $a_0 = -\infty$, $a_{n+1} = \infty$ and $F(a_{0})=F(b_{0})$, $F(a_{n+1}) = F(b_n)$. We have the ordered sequence,
\begin{equation*}
  -\infty = a_0 < b_0 < a_1 < b_1 < a_2 < \cdots < b_{n-1} < a_n < b_n < a_{n+1} = \infty.
\end{equation*}
We now identify the finite-valued part of this sequence with the integers from 0 to $2n$.

More precisely, we define a functor $i:[2n] \to \cat{(\R,\leq)}$ given by 
\begin{equation*}
  k \mapsto
  \begin{cases}
    b_{\frac{k}{2}} & \text{if } k \text{ is even} \\
    a_{\frac{k+1}{2}} & \text{if } k \text{ is odd.}
  \end{cases}
\end{equation*}
We also define a functor $r: \cat{(\R,\leq)} \to [2n]$ given by
\begin{equation*}
  c \mapsto
  \begin{cases}
    2k-1 & \text{if } c=a_k, \ k \in \{1,\ldots,n\} \\
    2k & \text{if } a_k < c < a_{k+1}, \ k \in \{0,\ldots,n\}.
  \end{cases}
\end{equation*}
Then we have the composite functor $ir: \cat{(\R,\leq)} \to \cat{(\R,\leq)}$ given by
\begin{equation*}
  c \mapsto
  \begin{cases}
    a_k & \text{if } c=a_k, \ k \in \{1,\ldots,n\} \\
    b_k & \text{if } a_k < c < a_{k+1}, \ k \in \{0,\ldots,n\}.
  \end{cases}
\end{equation*}
Precomposing $F$ with this functor gives us an induced functor $(ir)^*F \in \RVec$. That is, $(ir)^*F(c) = F(irc)$\footnote{Our notation comes from category theory; an arrow $f : x \rightarrow y$ defines a natural map $f^{*}:\operatorname{Hom}(y,z) \rightarrow \operatorname{Hom}(x,z)$ obtained by precomposing a given arrow $y \rightarrow z$ with $f$.}.
Notice that by Lemma~\ref{lem:regularInterval}, $F(irc) \isom  F(c)$ for all $c \in \R$. Thus, $(ir)^*:F \mapsto (ir)^*F$ is a natural isomorphism.

Next, $i^*F:[2n] \to \Vec$ can be extended to a functor $i^*F: {(\Z,\leq)} \to \Vec$ by setting $(i^*F)(k) = (i^{*}F)(0)$ for $k<0$, with $i^{*}F(k \leq 0)$ the identity, and for $k>2n$ setting $(i^*F)(k) = (i^*F)(2n)$ with $i^*F(2n\leq k)$ the identity.
By Lemma~\ref{lem:ZVec},
we can consider $i^*F$ to be a graded $\F[t]$-module.
Note that by assumption, $i^*F$ is a finitely-generated graded $\F[t]$-module.

By the structure theorem for finitely-generated graded modules over a
principal ideal domain (Theorem~\ref{thm:struct-pid}), there is a unique decomposition,
\begin{equation*}
  i^*F \isom \left( \bigoplus_{i=1}^{n_1} t^{c_i} \F[t] \right) \oplus 
\left( \bigoplus_{j=1}^{n_2} t^{d_j} \! \left( \F[t] /  (t^{e_j}) \right) \right).
\end{equation*}
It follows that as elements of $\ZVec$,
\begin{equation*}
  i^*F \isom \left( \bigoplus_{i=1}^{n_1} \chi_{[c_i,\infty)} \right) \oplus
  \left( \bigoplus_{j=1}^{n_2} \chi_{[d_j,d_j+e_j)} \right).
\end{equation*}
Therefore,
\begin{equation*}
  F \isom (ir)^*F = r^*i^*F \isom \left( \bigoplus_{i=1}^{n_1} r^*\chi_{[c_i,\infty)} \right) \oplus
  \left( \bigoplus_{j=1}^{n_2} r^*\chi_{[d_j,d_j+e_j)} \right),
\end{equation*}
where
\begin{equation*}
    r^*\chi_{[k,\infty)} = 
  \begin{cases}
    \chi_{[a_{\frac{k+1}{2}},\infty)} & \text{if $k$ odd}, \\
    \chi_{(a_{\frac{k}{2}},\infty)} & \text{if $k$ even},
 \end{cases}
\end{equation*}
and
\begin{equation*}
  r^*\chi_{[k,\ell)} =
    \begin{cases}
      \chi_{[a_{\frac{k+1}{2}},a_{\frac{\ell+1}{2}})} & \text{if } k,\ell \text{ odd}, \\
      \chi_{[a_{\frac{k+1}{2}},a_{\frac{\ell}{2}}]} & \text{if $k$ odd, $\ell$ even}, \\
      \chi_{(a_{\frac{k}{2}},a_{\frac{\ell+1}{2}})} & \text{if $k$ even, $\ell$ odd}, \\
      \chi_{(a_{\frac{k}{2}},a_{\frac{\ell+1}{2}}]} & \text{if $k, l$ even}.  
 \end{cases}  
\end{equation*} 
Thus $F$ has finite type.
\end{proof}

By the uniqueness of the decomposition in the structure theorem for graded modules over a graded PID in the previous proof, we get that finite type diagrams in $\RVec$ satisfy the following Krull--Schmidt theorem.
Compare this with~\cite[Proposition 2.2]{carlssonDeSilva:zigzag}.

\begin{corollary}[Krull--Schmidt] \label{cor:krullSchmidt}
  If $F \isom \oplus_{k=1}^n \chi_{I_k}$ and $F \isom \oplus_{j=1}^m \chi_{I'_j}$ then $n=m$ and the sequences $I_1,\ldots,I_n$ and $I'_1,\ldots,I'_m$ are the same up to reordering. \hfill \qedsymbol
\end{corollary}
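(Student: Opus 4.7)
The plan is to reduce the uniqueness claim to the uniqueness part of the structure theorem for finitely generated modules over a PID (Theorem~\ref{thm:struct-pid}), by running the discretization construction from the proof of Theorem~\ref{thm:tameFiniteType} on both decompositions at once. Throughout, I assume the intervals $I_k$ and $I'_j$ are nonempty; otherwise empty summands (which equal the zero diagram) can be added or removed freely, and the statement would have to be read as asserting equality of the multisets of nonempty intervals.

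First, I would observe that the set of critical values of $F$ is a finite subset of $\R$ determined by $F$ alone, and that, as noted in the first paragraph of the proof of Theorem~\ref{thm:tameFiniteType}, the critical values of a finite direct sum $\oplus_k \chi_{J_k}$ are precisely the finite endpoints of the $J_k$. Applying this to both given decompositions, the finite endpoints of the intervals $I_k$ and those of the intervals $I'_j$ coincide as sets. Enumerate this common critical value set as $a_1 < \cdots < a_N$, interleave regular values $b_0 < \cdots < b_N$, and build the functors $i:\cat{[2N]} \to \cat{(\R,\leq)}$ and $r:\cat{(\R,\leq)} \to \cat{[2N]}$ exactly as in the proof of Theorem~\ref{thm:tameFiniteType}, so that $(ir)^*F \isom F$ and, since $i^*$ is additive, $i^*F$ inherits both direct sum decompositions.

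Next, I would pass to graded $\F[t]$-modules via Lemma~\ref{lem:ZVec} (extending $i^*F$ from $\cat{[2N]}$ to $\cat{(\Z,\leq)}$ by constants, as in Theorem~\ref{thm:tameFiniteType}). The case analysis at the end of the proof of Theorem~\ref{thm:tameFiniteType} identifies $i^*\chi_{I}$, for any nonempty interval $I$ with endpoints in $\{a_1,\ldots,a_N\}\cup\{\pm\infty\}$, with a specific indecomposable graded $\F[t]$-module: a shifted free module $t^c\F[t]$ when $I$ is unbounded above, and a shifted cyclic torsion module $t^d(\F[t]/(t^e))$ when $I$ is bounded, with the values of $c$, $d$, $e$ and the parities of the associated indices in $[2N]$ encoding both the endpoints of $I$ and whether each endpoint is open or closed. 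The uniqueness statement of Theorem~\ref{thm:struct-pid} then forces the multisets of these indecomposable summands arising from the two decompositions to agree.

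Finally, I would invert the tables at the end of the proof of Theorem~\ref{thm:tameFiniteType} to show that the assignment $I \mapsto i^*\chi_I$ is injective on intervals with endpoints in $\{a_1,\ldots,a_N\}\cup\{\pm\infty\}$: from the shift $c$ or $d$, the torsion order $e$ (if any), and the parities of the corresponding indices, one reads off the endpoints of $I$ together with their openness. Combining this injectivity with the agreement of the graded module decompositions yields $n=m$ and equality of the sequences $(I_k)$ and $(I'_j)$ up to reordering.

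The only mildly subtle point is checking that injectivity of $I \mapsto i^*\chi_I$ covers all four interval shapes (open, closed, and both half-open types) together with the unbounded cases uniformly; this is pure bookkeeping, obtained by reversing the explicit $r^*\chi_{[k,\infty)}$ and $r^*\chi_{[k,\ell)}$ formulas at the end of the proof of Theorem~\ref{thm:tameFiniteType}, and presents no substantive difficulty once the discretization is set up so that every endpoint of every $I_k$ and every $I'_j$ is captured by some $a_i$.
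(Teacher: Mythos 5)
Your proof is correct and follows the same route as the paper, which derives the corollary from uniqueness in the structure theorem for finitely generated graded $\F[t]$-modules via the discretization constructed in the proof of Theorem~\ref{thm:tameFiniteType}; you fill in details the paper leaves implicit, in particular that the critical value set of $F$ is intrinsic (so a single discretization handles both decompositions simultaneously), that $i^*$ respects direct sums, and that $I \mapsto i^*\chi_I$ is injective on intervals with endpoints among the critical values. Your caveat about empty interval summands is also a legitimate observation about the statement as written.
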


\subsection{Barcodes and persistence diagrams}

Here we define \emph{barcodes} and \emph{persistence diagrams} for finite type diagrams in $\RVec$.
We observe that finite type diagrams in $\RVec$ are a categorification of finite barcodes.

\begin{definition}
  Assume $F \in \RVec$ has finite type. A \emph{barcode} is a multiset
  of intervals. The \emph{barcode} of $F$ is the multiset
  $\{I_k\}_{k=1}^n$ where $F \isom \oplus_{k=1}^n \chi_{I_k}$. This is
  well-defined by Corollary~\ref{cor:krullSchmidt} which follows from Theorem~\ref{thm:tameFiniteType}.

A \emph{persistence diagram} is a multiset of increasing pairs of extended real numbers. The \emph{persistence diagram} of $F$ is the multiset $\{(a_k,b_k)\}_{k=1}^n$, where $a_k \leq b_k$ and $\{a_k,b_k\}$ are the endpoints of $I_k$, with $F \isom \oplus_{k=1}^n \chi_{I_k}$.
Again, this is well-defined by Corollary~\ref{cor:krullSchmidt}.
\end{definition}

By Corollary~\ref{cor:krullSchmidt}, we immediately have the following. Compare with \cite[Corollary 3.1]{zomorodianCarlsson:computingPH} and \cite[Persistence Equivalence Theorem]{edelsbrunnerHarer:book}.
Note that a finite barcode is a finite multiset of intervals, not a multiset of finite intervals.

\begin{corollary}[Categorification of barcodes]
There is a bijection between isomorphism classes of finite type diagrams in $\RVec$ and finite barcodes. 
\end{corollary}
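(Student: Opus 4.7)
The plan is to construct the bijection explicitly in both directions and then verify they are mutually inverse, leaning almost entirely on the two results already in hand: Theorem~\ref{thm:tameFiniteType} (which identifies tame diagrams with finite type diagrams) and Corollary~\ref{cor:krullSchmidt} (the Krull--Schmidt decomposition).

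First I would define a map $\Phi$ from isomorphism classes of finite type diagrams in $\RVec$ to finite barcodes as follows. Given a finite type diagram $F$, choose any decomposition $F \isom \bigoplus_{k=1}^{n} \chi_{I_k}$ guaranteed by Definition~\ref{def:finiteType}, and set $\Phi([F]) = \{I_k\}_{k=1}^n$ as a multiset. Corollary~\ref{cor:krullSchmidt} asserts that any two such decompositions agree up to reordering, so the multiset is independent of the chosen decomposition. Moreover, if $F \isom F'$ and $F' \isom \bigoplus_{j=1}^m \chi_{I'_j}$, then composing gives $F \isom \bigoplus_{j=1}^m \chi_{I'_j}$, so again by Corollary~\ref{cor:krullSchmidt} the multisets $\{I_k\}$ and $\{I'_j\}$ coincide. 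Hence $\Phi$ is well-defined on isomorphism classes.

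Next I would define the candidate inverse $\Psi$: given a finite barcode $\mathcal{B} = \{I_k\}_{k=1}^n$, put $\Psi(\mathcal{B}) = \left[\bigoplus_{k=1}^n \chi_{I_k}\right]$. This is independent of the chosen enumeration of the multiset because reordering the indices induces an obvious isomorphism of direct sums in $\RVec$ (an abelian category, as observed at the start of Section~\ref{sec:vec}). The diagram $\bigoplus_{k=1}^n \chi_{I_k}$ is by definition of finite type, so $\Psi(\mathcal{B})$ indeed lies in the target set.

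Finally I would check $\Phi \circ \Psi = \Id$ and $\Psi \circ \Phi = \Id$. For $\Phi \circ \Psi$, starting from $\mathcal{B} = \{I_k\}_{k=1}^n$ we apply $\Phi$ to the isomorphism class of $\bigoplus_{k=1}^n \chi_{I_k}$, and Corollary~\ref{cor:krullSchmidt} ensures the resulting multiset is precisely $\{I_k\}_{k=1}^n$. For $\Psi \circ \Phi$, starting from $[F]$ with $F \isom \bigoplus_{k=1}^n \chi_{I_k}$, applying $\Phi$ yields the multiset $\{I_k\}$, and $\Psi$ then returns $\left[\bigoplus_{k=1}^n \chi_{I_k}\right] = [F]$.

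There is essentially no obstacle here beyond bookkeeping; the entire content of the corollary has been absorbed into Theorem~\ref{thm:tameFiniteType} (existence of a decomposition into interval diagrams) and Corollary~\ref{cor:krullSchmidt} (uniqueness up to reordering). The only mild subtlety is being careful that both well-definedness statements (of $\Phi$ across decompositions and isomorphisms, and of $\Psi$ across enumerations of the multiset) really are consequences of Krull--Schmidt together with the fact that direct sum in an abelian category is symmetric up to canonical isomorphism.
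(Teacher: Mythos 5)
Your proposal is correct and follows essentially the same route as the paper, which leaves the corollary unproved on the grounds that it is ``immediate'' from Corollary~\ref{cor:krullSchmidt}; you simply spell out the standard mutually-inverse-maps argument that the paper takes as read. (One tiny nit: the existence of a decomposition $F \isom \bigoplus_k \chi_{I_k}$ is already Definition~\ref{def:finiteType}, not an output of Theorem~\ref{thm:tameFiniteType}, though you do cite the definition correctly in the body of your argument.)
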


\subsection{Bottleneck distance}

In this section we define the \emph{bottleneck distance} between two barcodes in terms of the interleaving distance. We show that this results in the usual definition of~\cite{cseh:stability}.
We end by proving an isometric embedding of the set of finite barcodes with the bottleneck distance into the set of $\cat{(\R,\leq)}$-indexed diagrams in $\Vec$ with the interleaving distance.

\begin{definition}
  Given multisets $A$ and $B$, define the multiset $A_B$ to be the disjoint union of $A$ and the multiset containing the empty interval $\emptyset$ with cardinality $\abs{B}$.
  A \emph{stable bijection} or \emph{partial matching} between two multisets $A$ and $B$ is a bijection, $f: A_B \to B_A$. 
Write $f: A \rightleftharpoons B$.
\end{definition}

\begin{definition}
  Let $B$ and $B'$ be two barcodes. Define the \emph{bottleneck distance} between $B$ and $B'$ by
  \begin{equation} \label{eq:bottleneck}
    d_B(B,B') = \inf_{f: B \rightleftharpoons B'} \sup_{I \in \dom{f}} d(\chi_I,\chi_{f(I)}).
  \end{equation}
\end{definition}

On the right hand side of \eqref{eq:bottleneck} we have the interleaving distance. 
It follows from the following two propositions that this definition of bottleneck distance is equivalent to that in~\cite{cseh:stability}.

\begin{proposition} \label{prop:distance-intervals}
Let $I$ and $I'$ be two finite intervals.
\begin{enumerate}
\item If $I = I' = \emptyset$, then $d(\chi_I,\chi_{I'}) = 0$.
\item If $I' = \emptyset$ and $I$ has endpoints $a$ and $b$, then $d(\chi_I,\chi_{I'}) = \frac{b-a}{2}$.
\item If $I$ and $I'$ have endpoints $a,b$ and $a',b'$, respectively, then
\begin{equation*} \label{eq:distance-intervals}
  d(\chi_I,\chi_{I'}) = \min \left( \max(\abs{a-a'},\abs{b-b'}), \max \left(\frac{b-a}{2},\frac{b'-a'}{2} \right) \right).
\end{equation*}
\end{enumerate}
\end{proposition}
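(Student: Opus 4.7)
The plan is to first describe the natural transformations $\varphi:\chi_I \To \chi_{I'}T_{\eps}$ and $\psi:\chi_{I'} \To \chi_{I}T_{\eps}$ as concretely as possible, then translate the interleaving identities into pointwise conditions, and finally do a short case analysis on how the lengths of $I$ and $I'$ compare to $2\eps$. Since $\chi_I(c)$ and $\chi_{I'}(c+\eps)$ are each either $\F$ or $0$, the component $\varphi(c)$ is zero unless both $c \in I$ and $c+\eps \in I'$, i.e.\ unless $c$ lies in the (possibly empty) interval $J_\varphi = I \cap (I'-\eps)$. On $J_\varphi$ the naturality square forces $\varphi(c)$ to be a constant scalar $\lambda \in \F$, so $\varphi$ is determined by $\lambda$. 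Similarly $\psi$ is determined by a scalar $\mu \in \F$ and its nonzero locus $J_\psi = I' \cap (I-\eps)$. The interleaving identity $(\psi T_{\eps})\varphi = \chi_I\eta_{2\eps}$ then says, componentwise, that $\psi(c+\eps)\varphi(c) = \Id_\F$ whenever $c,c+2\eps \in I$ and is $0$ otherwise; and symmetrically for the other identity with $I'$.

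With this in hand, Case (1) is immediate since both diagrams are zero. For Case (2), $\chi_{I'}=0$ forces $\varphi=\psi=0$, and both identities collapse to $\chi_I\eta_{2\eps}=0$, which holds exactly when there is no $c$ with both $c$ and $c+2\eps$ in $I$; this fails for $\eps<(b-a)/2$ and holds for $\eps>(b-a)/2$, giving $d(\chi_I,\chi_{I'}) = (b-a)/2$. For Case (3) I would prove the upper bound by exhibiting two families of interleavings. First, when $\eps>\max((b-a)/2,(b'-a')/2)$, the zero interleaving $\varphi=\psi=0$ works since neither identity imposes any condition. Second, when $\eps>\max(|a-a'|,|b-b'|)$, the endpoint bounds imply both containments $[a+\eps,b-\eps]\subseteq[a',b']$ and $[a'+\eps,b'-\eps]\subseteq[a,b]$ (where we read the left-hand side as empty if $\eps$ exceeds the interval's half-length), so setting $\lambda=\mu=1$ defines compatible $\varphi,\psi$ that satisfy the interleaving identities. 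Taking infima yields $d(\chi_I,\chi_{I'}) \le \min(\max(|a-a'|,|b-b'|),\max((b-a)/2,(b'-a')/2))$.

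The lower bound is the technical heart of the proof. Given an $\eps$-interleaving, I split on whether at least one of $I,I'$ has length exceeding $2\eps$. If $b-a\le 2\eps$ and $b'-a'\le 2\eps$, we are already in the second case of the min. Otherwise, WLOG $b-a>2\eps$, so for each $c$ in the nonempty interval $[a,b-2\eps]$ the identity $\psi(c+\eps)\varphi(c)=\Id_\F$ forces $\lambda\mu=1$ (in particular both nonzero) and $c+\eps\in I'$. Thus $[a+\eps,b-\eps]\subseteq[a',b']$, which gives $a'-a\le\eps$ and $b-b'\le\eps$. To obtain the matching inequalities $a-a'\le\eps$ and $b'-b\le\eps$ I combine this containment with length information: if $b'-a'>2\eps$ the symmetric argument using $\chi_{I'}\eta_{2\eps}$ gives them directly, while if $b'-a'\le 2\eps$ I use $a+\eps\le b'$ together with $b'\le a'+2\eps$ to deduce $a-a'\le\eps$, and symmetrically $a'\le b-\eps$ with $b'\le a'+2\eps$ to deduce $b'-b\le\eps$. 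Either way $\max(|a-a'|,|b-b'|)\le\eps$, completing the lower bound.

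The main obstacle I expect is precisely this asymmetric subcase where only one interval has length exceeding $2\eps$: the existence of an interleaving only produces half of the desired inequalities on endpoints, and one must extract the other half by combining the containment $[a+\eps,b-\eps]\subseteq[a',b']$ with the length bound $b'-a'\le 2\eps$. Once this arithmetic manoeuvre is in place, assembling the min from the two cases is routine. All endpoint-inclusion issues (open vs.\ closed intervals) are absorbed by taking the infimum, so no separate argument is needed for intervals of different types.
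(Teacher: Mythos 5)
Your proof is correct, and it takes a genuinely different route from the paper's. The paper first isolates two lemmas, stated in terms of half-lengths $h=\tfrac{b-a}{2}$, $h'=\tfrac{b'-a'}{2}$ and midpoints $m$, $m'$: Lemma~\ref{lem:dleq} ($d(\chi_I,\chi_{I'})\le\max(h,h')$ via the zero interleaving) and Lemma~\ref{lem:dgeq} (if $m\notin I'$ then $d(\chi_I,\chi_{I'})\ge h$). It then proves case (3) by a four-way split on whether $m\in I'$ and whether $m'\in I$; in the first three subcases it does arithmetic to show the $\min$ in the formula collapses to $\max(h,h')$, $h'$, or $h$ respectively and invokes the two lemmas, and only in the fourth subcase (both midpoints contained) does it give a direct construction and obstruction for the value $\max(|a-a'|,|b-b'|)$. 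You instead characterize every natural transformation $\chi_I\Rightarrow\chi_{I'}T_\eps$ as a single scalar supported on $I\cap(I'-\eps)$, translate the interleaving identities into the pointwise statement that $c+\eps\in I'$ whenever $c,c+2\eps\in I$, and split on whether the interval lengths exceed $2\eps$. The midpoint subcases of the paper are thereby replaced by your containment $(a+\eps,b-\eps)\subseteq I'$ together with the arithmetic manoeuvre $a+\eps\le b'\le a'+2\eps$ in subcase B2, which neatly recovers the missing endpoint inequality. What the paper's organization buys is that Lemmas~\ref{lem:dleq}--\ref{lem:dgeq} are stated once and reused unchanged for Proposition~\ref{prop:distance-intervals-infinite} (the infinite-interval case); your organization buys a shorter case analysis and an explicit description of the full set of interleavings, and it avoids the need to verify separately in each subcase that the claimed formula simplifies correctly. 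One small thing worth spelling out if you write this up: the upper bound needs strict inequalities $\eps>\max(|a-a'|,|b-b'|)$ so that $[a+\eps,b-\eps]$ lands in the \emph{open} interval $(a',b')\subseteq I'$ regardless of whether $I'$ is open or closed at its endpoints, and similarly the naturality check for $\lambda=\mu=1$ relies on $|a-a'|<\eps$ (resp.\ $|b-b'|<\eps$) to rule out a point of $I$ mapped left (resp.\ right) of $I'$; you flag this as absorbed by the infimum, which is correct, but the naturality issue is easy to overlook.
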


\begin{proposition} \label{prop:distance-intervals-infinite}
  Let $I$ and $I'$ be two intervals, at least one of which is infinite.
  \begin{enumerate}
  \item If $I = I' = \R$, then $d(\chi_I,\chi_{I'}) = 0$.
  \item If $\inf(I) = \inf(I') = -\infty$ and $I$ and $I'$ have right endpoints $b$ and $b'$, then $d(\chi_I,\chi_{I'}) = \abs{b-b'}$.
  \item If $\sup(I) = \sup(I') = \infty$ and $I$ and $I'$ have left endpoints $a$ and $a'$, then $d(\chi_I,\chi_{I'}) = \abs{a-a'}$.
  \item In all other cases, $d(\chi_I,\chi_{I'}) = \infty$.
 \end{enumerate}
\end{proposition}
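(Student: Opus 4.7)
The plan is to treat the four cases separately, in each case computing $d(\chi_I,\chi_{I'})$ directly from Definition~\ref{def:interleaving}. For the upper bound we construct an explicit $\eps$-interleaving, where every component of $\varphi$ and $\psi$ is forced to be either the identity of $\F$ or $0$; for the lower bound (or the impossibility of any finite $\eps$-interleaving in case~(4)) we locate a single value $a\in\R$ at which one of the identities $(\psi T_\eps)\varphi = \chi_I\eta_{2\eps}$ or $(\varphi T_\eps)\psi = \chi_{I'}\eta_{2\eps}$ must fail. Throughout, recall that $\chi_I(a)\in\{0,\F\}$ and each $\chi_I(a\leq b)$ is $\Id_\F$ exactly when both $a,b\in I$, and is $0$ otherwise; hence $\varphi(a)$ can be nonzero only when $a\in I$ and $a+\eps\in I'$.

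For (1), the identity is a $0$-interleaving of $\chi_\R$ with itself, so $d(\chi_\R,\chi_\R)=0$. For (2), assume without loss of generality $b\leq b'$, so both $I$ and $I'$ are left-infinite intervals with respective right endpoints $b$ and $b'$. \emph{Upper bound:} for any $\eps\geq b'-b$, define $\varphi(a)$ and $\psi(a)$ to be $\Id_\F$ whenever both source and target equal $\F$, and $0$ otherwise; the interleaving identities reduce to checking that for all $a$ with $a$ and $a+2\eps$ in $I$ (resp.\ $I'$), the intermediate point $a+\eps$ lies in $I'$ (resp.\ $I$), which uses precisely $\eps\geq b'-b$. \emph{Lower bound:} suppose $\eps<b'-b$ and pick $a$ with $b-\eps<a\leq b'-2\eps$ (nonempty since $b-\eps<b'-\eps<b'-\eps$, and one checks $b-\eps<b'-2\eps$ is exactly $\eps<b'-b$). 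Then $a+\eps>b$, so $\chi_I(a+\eps)=0$ forces $\psi(a)=0$, giving $(\varphi T_\eps)\psi(a)=0$; but $a$ and $a+2\eps$ both lie in $I'$, so $\chi_{I'}\eta_{2\eps}(a)=\Id_\F\neq 0$, a contradiction. Case (3) is identical after reflecting $\R$ about $0$ and swapping the roles of $\varphi$ and $\psi$.

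For (4), the remaining configurations are: one of $I,I'$ equals $\R$ while the other does not; one is left-infinite and the other right-infinite; or one is bounded and the other is infinite. In each configuration there is an unbounded region $U\subseteq \R$ on which exactly one of $\chi_I, \chi_{I'}$ is the zero functor while the other is $\F$. Fix any $\eps\geq 0$ and choose $a\in U$ so that $a+2\eps$ still lies in $U$ (possible since $U$ is unbounded). If $\chi_I$ is $\F$ on $U$ then $a+\eps\notin I'$ (the unbounded tail of $U$ lies outside $I'$), so $\varphi(a)=0$, hence $(\psi T_\eps)\varphi(a)=0$; but $\chi_I\eta_{2\eps}(a)=\Id_\F$, contradicting the interleaving identity. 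The opposite assignment of roles gives the same contradiction via the other identity. Thus no $\eps\geq 0$ admits an interleaving and $d(\chi_I,\chi_{I'})=\infty$.

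The main obstacle is bookkeeping, not difficulty: one must enumerate the sub-cases of (4) cleanly (six symmetric configurations) and verify that in each one the unbounded region $U$ really can be chosen so that both $a$ and $a+2\eps$ lie in it for every $\eps\geq 0$. A secondary nuisance is that intervals may be open or closed at a finite endpoint, but since this only affects the value at a single point it never changes which of $\{0,\F\}$ the relevant $\chi$ takes on an unbounded tail, nor the inequalities governing the upper bound in (2)/(3). I would handle the endpoint conventions by phrasing all conditions in terms of strict vs.\ non-strict inequalities uniformly, noting at the end that any ambiguity contributes at most a measure-zero discrepancy which does not affect the infimum defining $d$.
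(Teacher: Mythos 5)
Your proof is correct, and since the paper explicitly leaves this verification to the reader (noting only that it ``follows'' from Lemmas~\ref{lem:dleq} and~\ref{lem:dgeq}, which are in fact stated only for finite intervals and so do not literally cover the infinite case), your direct construct-and-obstruct approach is exactly what is needed, and it is in the same spirit as the construction/obstruction style used for the finite case. Three small points worth tightening. In case~(2) the nonemptiness computation contains a typo (``$b-\eps<b'-\eps<b'-\eps$''); the intended observation is simply that $b-\eps < b'-2\eps$ is equivalent to $\eps < b'-b$, which you do state, and to guarantee $a+2\eps \in I'$ regardless of whether $I'$ is open or closed at $b'$ one should take $a$ strictly below $b'-2\eps$ (this changes nothing about nonemptiness). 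In case~(4) the argument implicitly uses that $U$ is not merely unbounded but an unbounded \emph{ray}, so that $a,a+2\eps \in U$ forces $a+\eps \in U$ and hence $a+\eps$ outside the other interval; such a ray does exist in every one of the listed configurations (it is an unbounded tail contained in the larger interval and disjoint from the smaller one), but this should be stated. Finally, the ``measure-zero discrepancy'' remark about endpoints is defensible precisely because $d$ is an infimum --- changing a single endpoint only affects whether an interleaving exists at $\eps = \abs{b-b'}$ exactly, never the infimum --- but it is cleaner simply to run all the lower-bound arguments with strict inequalities throughout, which removes the ambiguity entirely.
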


Propositions~\ref{prop:distance-intervals} and \ref{prop:distance-intervals-infinite} follow from the following two lemmas. The proofs are technical yet straightforward, and we leave them to the motivated reader.

Assume the intervals $I$ and $I'$ are finite.
Let $h$ and $h'$ each denote half the length of the interval $I$ and $I'$, respectively, where the length of $\emptyset$ is 0.
If $I$ and $I'$ are nonempty, let $m$ and $m'$ denote their respective midpoints.

\begin{lemma} \label{lem:dleq}
  Assume $I$ and $I'$ are finite intervals.
  $d(\chi_I,\chi_{I'}) \leq \max(h,h').$
\end{lemma}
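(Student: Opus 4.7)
The plan is to exhibit, for every $\eps > \max(h,h')$, an explicit $\eps$-interleaving of $\chi_I$ and $\chi_{I'}$, and then conclude by the infimum definition of $d$. The key observation is that because $\chi_I$ is supported on an interval of length $2h$, any $\eps$ strictly larger than $h$ forces the transition $\chi_I(a \leq a+2\eps)$ to be the zero map for every $a$, which is exactly what is needed to use the zero natural transformations as the interleaving data.

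Concretely, I would set both $\varphi : \chi_I \To \chi_{I'} T_\eps$ and $\psi : \chi_{I'} \To \chi_I T_\eps$ equal to the zero natural transformation. Naturality is automatic, since every naturality square of Diagram~\eqref{eq:naturalTransformation} consists of zero maps. The content then lies in verifying the two identities of Definition~\ref{def:interleaving}. Since $\varphi = \psi = 0$, both $(\psi T_\eps)\varphi$ and $(\varphi T_\eps)\psi$ vanish, so it suffices to check that $\chi_I \eta_{2\eps} = 0$ and $\chi_{I'} \eta_{2\eps} = 0$.

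Evaluating at $a \in \R$, one has $(\chi_I \eta_{2\eps})(a) = \chi_I(a \leq a + 2\eps)$; by Definition~\ref{def:finiteType} this morphism equals $\Id_\F$ precisely when both $a$ and $a + 2\eps$ belong to $I$, and equals $0$ otherwise. Since $I$ has length $2h < 2\eps$, no two of its points differ by $2\eps$, so $\chi_I \eta_{2\eps}(a) = 0$ for every $a$. The same argument, using $\eps > h'$, yields $\chi_{I'} \eta_{2\eps} = 0$.

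Thus $\chi_I$ and $\chi_{I'}$ are $\eps$-interleaved for every $\eps > \max(h,h')$, and by Definition~\ref{def:interleaving-distance} the infimum satisfies $d(\chi_I, \chi_{I'}) \leq \max(h,h')$, which is the desired bound. I do not anticipate any substantive obstacle; the only subtlety worth flagging is that the zero interleaving genuinely fails at $\eps = \max(h,h')$ itself (for a closed interval, $\chi_I(a \leq a+2h)$ is the identity when $a$ is the left endpoint), so the argument must be carried out with strict inequality and the bound then obtained by taking the infimum, rather than by producing a single interleaving at the optimal $\eps$.
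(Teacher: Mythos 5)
Your proposal is correct and takes essentially the same approach as the paper's proof: for $\eps > \max(h,h')$, both $\chi_I \eta_{2\eps}$ and $\chi_{I'}\eta_{2\eps}$ vanish, so the zero natural transformations give an $\eps$-interleaving, and the bound follows by taking the infimum. Your additional remark about why strict inequality is needed is a nice clarification of what the paper leaves implicit.
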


\begin{proof}
  Let $\eps > \max(h,h').$ Then $\chi_I \eta_{2\eps} = 0 = \chi_{I'} \eta_{2\eps}$.
  Let $\varphi = 0$ and $\psi = 0$.
  Then $\varphi$ and $\psi$ give an $\eps$-interleaving of $\chi_I$ and $\chi_{I'}$.
\end{proof}

\begin{lemma} \label{lem:dgeq}
  Assume $I$ and $I'$ are finite intervals.
  If $m \notin I'$, then $d(\chi_I,\chi_{I'}) \geq h$.
\end{lemma}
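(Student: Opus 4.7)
The plan is to argue by contradiction: assume $\chi_I$ and $\chi_{I'}$ are $\eps$-interleaved for some $\eps < h$, and derive from the interleaving identity that $m$ must lie in $I'$, contradicting the hypothesis. Since the hypothesis $m \notin I'$ already requires $m$ to be defined, $I$ is nonempty and $h > 0$, so there is no degenerate case to dispose of.

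The key move is to evaluate the interleaving identity $(\psi T_\eps)\varphi = \chi_I \eta_{2\eps}$ at the parameter $a := m - \eps$, chosen so that $a + \eps = m$ and $a + 2\eps = m + \eps$. Since $\eps < h$, both $a = m - \eps$ and $a + 2\eps = m + \eps$ lie strictly between the endpoints $m - h$ and $m + h$ of $I$, hence both are elements of $I$ regardless of whether its endpoints are open or closed. Therefore $\chi_I(a \leq a + 2\eps) = \Id_\F$, which is nonzero, and the interleaving identity forces the composite $\psi(m) \circ \varphi(a)$ to be nonzero. In particular, $\varphi(a)$ cannot be the zero map.

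On the other hand, $\varphi(a)$ is a linear map from $\chi_I(a) = \F$ to $\chi_{I'}(a+\eps) = \chi_{I'}(m)$, and the hypothesis $m \notin I'$ gives $\chi_{I'}(m) = 0$. Thus $\varphi(a) = 0$, contradicting the previous paragraph. Hence no $\eps$-interleaving exists for $\eps < h$, which yields $d(\chi_I, \chi_{I'}) \geq h$. The only delicate point in the argument is locating the right index at which to test the interleaving condition; once $a = m - \eps$ is chosen, the strict inequality $\eps < h$ is precisely what guarantees both $a$ and $a + 2\eps$ are interior to $I$, and the rest is bookkeeping with the defining identity of an $\eps$-interleaving.
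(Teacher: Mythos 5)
Your proof is correct and follows essentially the same route as the paper's: evaluate the interleaving identity $(\psi T_\eps)\varphi = \chi_I\eta_{2\eps}$ at $m-\eps$, note that $\chi_{I'}(m)=0$ forces $\varphi(m-\eps)=0$, and contradict the fact that $\chi_I(m-\eps \leq m+\eps)=\Id_\F$. One small slip: $I$ nonempty gives $h\geq 0$, not $h>0$ (a singleton has $h=0$), but in that case the inequality $d\geq 0$ is trivial, so nothing is lost.
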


\begin{proof}
  Let $\eps < h$.
  Then $[m-\eps,m+\eps] \subset I$.
  Thus $\chi_I \eta_{2\eps}(m-\eps) = \Id_{\F}$.
  Suppose $m \notin I'$. 
  Assume there exists an $\eps$-interleaving ($\varphi$, $\psi$) of $\chi_I$ and $\chi_{I'}$.
  Then $(\psi T_{\eps}) \varphi(m-\eps) = \Id_{\F}$.
  But $\varphi(m-\eps) \in \chi_{I'}(m) = 0$.
  Therefore $(\psi T_{\eps}) \varphi(m-\eps) = 0$, which is a contradiction.
  Thus $d(\chi_I,\chi_{I'}) \geq h$.
\end{proof}

In the statement of the following theorem we abuse notation slightly by using $\RVec$ to denote the set of objects in the category $\RVec$.

\begin{theorem}[Categorification of the metric space of persistence diagrams]
\label{thm:embedding}
Let $\B$ be the set of finite barcodes, $d_B$ the bottleneck distance, and $d$ the interleaving distance. 
  The mapping $\chi$ defined by $\chi(\{I_k\}_{k=1}^n) = \oplus_{k=1}^n \chi_{I_k}$ gives an isometric embedding of metric spaces
  \begin{equation*}
    \chi: (\B,d_B) \incl (\RVec,d).
  \end{equation*}
\end{theorem}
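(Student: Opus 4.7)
The map $\chi$ is well-defined on isomorphism classes by Krull--Schmidt (Corollary~\ref{cor:krullSchmidt}), since $\bigoplus_k \chi_{I_k}$ depends up to isomorphism only on the multiset $\{I_k\}$. It suffices to establish the equality $d(\chi(B),\chi(B')) = d_B(B,B')$ for any two finite barcodes $B = \{I_k\}_{k=1}^n$ and $B' = \{I'_j\}_{j=1}^m$, which I will split into two inequalities.

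\textbf{Upper bound $d(\chi(B),\chi(B')) \leq d_B(B,B')$.} Given $\eps > d_B(B,B')$, fix a partial matching $f\colon B \rightleftharpoons B'$ with $d(\chi_I, \chi_{f(I)}) < \eps$ for every $I \in \dom f$, using the convention $\chi_\emptyset = 0$. For each $I \in \dom f$ choose natural transformations $\varphi_I, \psi_I$ realizing an $\eps$-interleaving of $\chi_I$ and $\chi_{f(I)}$. Since direct sums in $\RVec$ are computed objectwise and commute with the translation functor $T_\eps$, the assemblies $\varphi = \bigoplus_I \varphi_I$ and $\psi = \bigoplus_I \psi_I$ satisfy the interleaving identities \eqref{eq:interleaving} summandwise, so they $\eps$-interleave $\chi(B)$ and $\chi(B')$. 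Letting $\eps$ decrease to $d_B(B,B')$ gives the bound.

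\textbf{Lower bound $d(\chi(B),\chi(B')) \geq d_B(B,B')$.} This is the substantive direction. Given $\eps > d(\chi(B),\chi(B'))$ and an $\eps$-interleaving $(\varphi,\psi)$, I will produce a partial matching of bottleneck cost at most $\eps$ by induction on $n+m$. The Krull--Schmidt decomposition writes $\varphi$ and $\psi$ as matrices of components $\varphi_{jk}\colon \chi_{I_k} \To \chi_{I'_j}T_\eps$ and $\psi_{kj}\colon \chi_{I'_j} \To \chi_{I_k}T_\eps$. Select an interval of maximal half-length on one side, say $I_{k_0}$ with endpoints $a,b$. If $(b-a)/2 \leq \eps$, then every remaining interval is also short in this sense and Lemma~\ref{lem:dleq} allows matching them all to $\emptyset$ at cost at most $\eps$. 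Otherwise the identity $(\psi T_\eps)\varphi = \chi(B)\eta_{2\eps}$, restricted to the $I_{k_0}$ summand, forces some composite $\psi_{k_0 j_0} T_\eps \circ \varphi_{j_0 k_0}$ to be nonzero on a subinterval of $I_{k_0}$. Indecomposability of interval modules, combined with Propositions~\ref{prop:distance-intervals} and~\ref{prop:distance-intervals-infinite}, then yields $d(\chi_{I_{k_0}}, \chi_{I'_{j_0}}) \leq \eps$. A change of basis in the Krull--Schmidt decompositions strips off the matched pair to produce an $\eps$-interleaving of the complementary summands, and the induction closes the argument.

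\textbf{Main obstacle.} The hard step is the inductive peeling: after identifying a matching pair $(I_{k_0}, I'_{j_0})$, one must modify the Krull--Schmidt bases so that the remaining blocks of $(\varphi,\psi)$ truly form an $\eps$-interleaving of the complementary summands. This is the algebraic version of the classical ``box lemma'' from the stability literature. Note that the upper bound direction of the isometry is due to \cite{ccsggo:interleaving}, as remarked in Section~\ref{sec:recent}.
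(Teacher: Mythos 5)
Your treatment of the direction $d(\chi(B),\chi(B')) \leq d_B(B,B')$ matches the paper's approach: given a partial matching of bottleneck cost less than $\eps$, interleave each matched pair and take direct sums, using that direct sums commute with $T_\eps$. (The paper abstracts the last step into Corollary~\ref{cor:directSumInterleaving}, which it deduces from the abelian structure of $\Inter{A}$; your direct objectwise argument amounts to the same thing and is fine.) Your remark that well-definedness follows from Corollary~\ref{cor:krullSchmidt} is also correct, though in the paper this is already baked into the definition of barcode.

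For the other direction, $d_B(B,B') \leq d(\chi(B),\chi(B'))$, there is a genuine gap. The paper does not prove this direction at all --- it simply cites \cite[Theorem~4.4]{ccsggo:interleaving}. You have also mixed up the attribution: the direction taken from \cite{ccsggo:interleaving} is precisely your ``lower bound'' (the hard, algebraic-stability direction), not the upper bound as you state at the end. Your attempt to prove this direction by ``peeling'' an interval of maximal half-length and inducting on $n+m$ does not go through as sketched. First, the fact that some composite $\psi_{k_0 j_0}T_\eps \circ \varphi_{j_0 k_0}$ is nonzero only gives a single nonzero map from $\chi_{I_{k_0}}$ to itself shifted by $2\eps$ factoring through $\chi_{I'_{j_0}}$; this does not by itself supply the pair of natural transformations in both directions, nor the two identities \eqref{eq:interleaving}, required to conclude $d(\chi_{I_{k_0}},\chi_{I'_{j_0}}) \leq \eps$. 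Second, the ``change of basis'' step --- that after removing the matched summands, the remaining matrix blocks of $(\varphi,\psi)$ constitute an $\eps$-interleaving of the complementary summands --- is exactly the crux of the algebraic stability theorem, and it is false without substantial additional work: interleavings do not restrict to direct summands in any naive sense. Making this rigorous requires either the interpolation/box-lemma argument of \cite{ccsggo:interleaving} or the later induced-matching machinery, neither of which you supply. You should simply cite \cite[Theorem~4.4]{ccsggo:interleaving} for this direction, as the paper does.
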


\begin{proof}
  Let $B, B' \in \B$.
  By~\cite[Theorem 4.4]{ccsggo:interleaving}, we know that $d_B(B,B') \leq d(\chi(B),\chi(B'))$.
  It remains to show that $$d(\chi(B),\chi(B')) \leq d_B(B,B').$$ 
  If $d_B(B,B') = \infty$, then this is trivial. Assume that $d_B(B,B')<\infty$.

  Let $f:B \rightleftharpoons B'$ such that $\sup_{I \in \dom(f)} d(\chi_I,\chi_{f(I)}) < \infty$.
  Choose $\eps > \sup_{I \in \dom(f)} d(\chi_I,\chi_{f(I)})$.
  By Lemma~\ref{lem:biggerInterleaving}, 
  for each $I \in \dom(f)$, $\chi_I$ and $\chi_{f(I)}$ are $\eps$-interleaved.
  By Corollary~\ref{cor:directSumInterleaving}, $\chi(B)$ and $\chi(B')$ are $\eps$-interleaved.
 


  Thus $\chi(B)$ and $\chi(B')$ are $\eps$-interleaved for all $\eps>d_B(B,B')$.
  It follows that $d(\chi(B),\chi(B')) \leq d_B(B,B').$
\end{proof}

\section{Stability}
\label{sec:stability}

In~\cite{cseh:stability}, Cohen-Steiner, Edelsbrunner and Harer prove that persistent homology of sublevel sets of a function is stable with respect to perturbations of the function as measured by the supremum norm.
In this section, we use our categorical framework to generalize this Stability Theorem, as well as its generalization in~\cite{ccsggo:interleaving}.

\medskip

   Let $X \in \Top$. Assume $f,g: X \to \R$. Note that we do not require that $f$ and $g$ be continuous. 
Let $F \in \RTop$ be defined by $F(a) = f^{-1}(\infty,a]$ for $a \in \R$ and $F(a\leq b)$ is given by inclusion. Define $G$ similarly using $g$.
Let $H:\Top \to \D$ be any functor, e.g. singular homology with coefficients in a field $\F$, or rational homotopy groups.
Recall that $\norm{f-g}_{\infty} = \sup_{x \in X} \abs{f(x)-g(x)}$.

\begin{theorem}[Stability Theorem] \label{thm:stability}
     \begin{equation*}
        d(HF,HG) \leq ||f-g||_{\infty}.
      \end{equation*}
\end{theorem}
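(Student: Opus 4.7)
The plan is to produce an $\eps$-interleaving between the diagrams $F$ and $G$ in $\RTop$ whenever $\eps \geq \|f-g\|_\infty$, and then invoke Proposition~\ref{prop:functoriality} to transport this interleaving to $HF$ and $HG$. This reduces the theorem to a single observation about sublevel sets.

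First I would dispose of the trivial case $\|f-g\|_\infty = \infty$. Otherwise, fix any $\eps \geq \|f-g\|_\infty$. The crucial observation is that for every $x \in X$ we have $g(x) \leq f(x) + \eps$ and $f(x) \leq g(x) + \eps$. Consequently, for every $a \in \R$,
\begin{equation*}
f^{-1}(-\infty,a] \subseteq g^{-1}(-\infty,a+\eps] \quad \text{and} \quad g^{-1}(-\infty,a] \subseteq f^{-1}(-\infty,a+\eps],
\end{equation*}
i.e.\ $F(a) \subseteq G(a+\eps) = GT_\eps(a)$ and $G(a) \subseteq F(a+\eps) = FT_\eps(a)$.

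Next I would package these containments as natural transformations $\varphi : F \To GT_\eps$ and $\psi : G \To FT_\eps$, with $\varphi(a)$ and $\psi(a)$ the inclusion maps. Naturality is immediate because all morphisms in $F$, $G$, $GT_\eps$ and $FT_\eps$ are themselves subspace inclusions, so every required square is a commutative diagram of inclusions of subsets of $X$. The composite $F(a) \xto{\varphi(a)} G(a+\eps) \xto{\psi(a+\eps)} F(a+2\eps)$ is the inclusion $f^{-1}(-\infty,a] \incl f^{-1}(-\infty,a+2\eps]$, which is exactly $F\eta_{2\eps}(a)$; symmetrically for the other composite. Hence $(\psi T_\eps)\varphi = F\eta_{2\eps}$ and $(\varphi T_\eps)\psi = G\eta_{2\eps}$, so $F$ and $G$ are $\eps$-interleaved in $\RTop$.

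Finally I would apply Proposition~\ref{prop:functoriality}: the functor $H : \Top \to \D$ carries the $\eps$-interleaving $(\varphi,\psi)$ to an $\eps$-interleaving $(H\varphi, H\psi)$ of $HF$ and $HG$. Therefore $d(HF,HG) \leq \eps$ for every $\eps \geq \|f-g\|_\infty$, which gives $d(HF,HG) \leq \|f-g\|_\infty$ by taking the infimum. There is no serious obstacle here: the entire content of the theorem is the set-theoretic inclusion of sublevel sets, and the categorical machinery of Section~\ref{sec:interleaving} (in particular the functoriality proposition) does the rest. The payoff is that $H$ can be any functor whatsoever on $\Top$, and no tameness, continuity, or triangulability hypotheses on $f$, $g$, or $X$ are required.
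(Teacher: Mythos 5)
Your proof is correct and follows essentially the same route as the paper: observe the sublevel-set inclusions $F(a)\subseteq G(a+\eps)$ and $G(a)\subseteq F(a+\eps)$ for $\eps = \|f-g\|_\infty$, conclude that $F$ and $G$ are $\eps$-interleaved in $\RTop$, and then apply Proposition~\ref{prop:functoriality} to transport this to $HF$ and $HG$. Your write-up just spells out the naturality check and the $\|f-g\|_\infty = \infty$ case a bit more explicitly than the paper does.
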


\begin{proof}
  Let $\eps = \norm{f-g}_{\infty}$.
    First we observe that by the assumption,
   \begin{equation*}
      F(a) = f^{-1}(-\infty,a] \subseteq g^{-1}(-\infty,a+\eps] = G(a+\eps).
    \end{equation*}
  and similarly, $G(a) \subseteq F(a+\eps)$.
  Thus, $F$ and $G$ are $\eps$-interleaved.
  It follows that $HF$ and $HG$ are $\eps$-interleaved (Proposition~\ref{prop:functoriality}), and thus
      \begin{equation*}
        d(HF,HG) \leq ||f-g||_{\infty}. \qedhere
      \end{equation*}
    \end{proof}

\section{Extended persistence}
\label{sec:extended}

In~\cite{cseh:extendingP}, Cohen-Steiner, Edelsbrunner and Harer, define extended persistence to obtain a sequence of vector spaces in which the homology classes of the total space do not live forever. Given a simplicial complex $K$ on $n$ ordered vertices, let $K_i$ be the subcomplex spanned by the first $i$ vertices, and let $L_i$ be the subcomplex spanned by the last $i$ vertices.
Let $H_k$ denote degree $k$ relative simplicial homology with coefficients in the field $\Z / 2\Z$.
Then they construct the sequence,
\begin{multline*}
  0 = H_k(K_0,\emptyset) \to H_k(K_1,\emptyset) \to \cdots \to H_k(K_n,\emptyset)\\ = H_k(K,L_0) \to H_k(K,L_1) \to \cdots \to H_k(K,L_n) = 0.
\end{multline*}
They show that the Stability Theorem of~\cite{cseh:stability} can be applied in this case.
Here we give a generalization of this construction and the corresponding stability theorem.

\medskip

Let $X \in \Top$. Assume $f: X \to \R$, where $f$ need not be continuous, and there exists an $M \in \R$ such that $f(x) \leq M$ for all $x \in X$.
Let $s > 0$ be an arbitrary amount to space out the upward and
downward filtrations.
Define the $\cat{(\R,\leq)}$-indexed diagram of pairs of topological spaces, $F \in \RPair$ as follows.

For $c < M+s$, let $F(c) = (f^{-1}(-\infty,c], \emptyset)$.
For $c \geq M+s$, let $F(c) = (X,f^{-1}[2M+s-c,\infty))$.
Notice that 
for $M \leq c < M+s$, $F(c) = (X,\emptyset)$,
and $F(M+s) = (X,f^{-1}(M))$.

For $c \leq d$, $F(c\leq d)$ is given by inclusion. Indeed, if $c \leq d < M+s$, we have $(f^{-1}(-\infty,c],\emptyset) \subseteq (f^{-1}(-\infty,d],\emptyset)$, if $M+s \leq c \leq d$, then $(X,f^{-1}[2M+s-c,\infty)) \subseteq (X,f^{-1}[2M+s-d,\infty))$, and if $c < M+s \leq d$, then 
$(f^{-1}(-\infty,c],\emptyset) \subseteq (X,f^{-1}[2M+s-d,\infty))$.

In the special case that there exists an $m \in \R$ such that $f(x) \geq m$ for all $x \in X$, then
for $c < m$, $F(c) = (\emptyset, \emptyset)$, and
for $c \geq M+s+(M-m) = 2M+s-m$, $F(c) = (X,X)$.

Now assume that we also have another (not necessarily continuous) map $g:X \to (-\infty,M]$.
Define $G \in \RPair$ similarly.
Let $H: \Pair \to \D$ be any functor, e.g. relative homology with coefficients in some field $\F$.

\begin{theorem}[Stability theorem for extended persistence] \label{thm:stability-extended}
  \begin{equation*}
    d(HF,HG) \leq \norm{f-g}_{\infty}.
  \end{equation*}
\end{theorem}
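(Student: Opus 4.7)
The plan is to mimic the proof of Theorem~\ref{thm:stability}: first produce an $\eps$-interleaving of $F$ and $G$ in $\RPair$ for $\eps = \|f-g\|_\infty$, and then invoke functoriality (Proposition~\ref{prop:functoriality}) with the functor $H:\Pair \to \D$. Everything will be built from inclusions of pairs, so that all coherence squares commute tautologically.

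Set $\eps = \|f-g\|_\infty$. For each $c \in \R$ I would define $\varphi(c) : F(c) \to G(c+\eps)$ and $\psi(c) : G(c) \to F(c+\eps)$ by inclusions of pairs, splitting into three regimes according to whether the source and target live in the ``ascending'' part ($<M+s$) or the ``descending'' part ($\geq M+s$) of the construction. In the ascending/ascending regime ($c+\eps < M+s$), the bound $|f(x)-g(x)|\leq \eps$ gives $f^{-1}(-\infty,c]\subseteq g^{-1}(-\infty,c+\eps]$, so the inclusion of pairs $(f^{-1}(-\infty,c],\emptyset)\hookrightarrow (g^{-1}(-\infty,c+\eps],\emptyset)$ defines $\varphi(c)$. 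In the descending/descending regime ($c\geq M+s$), for $x\in f^{-1}[2M+s-c,\infty)$ we have $g(x)\geq f(x)-\eps\geq 2M+s-(c+\eps)$, so $f^{-1}[2M+s-c,\infty)\subseteq g^{-1}[2M+s-c-\eps,\infty)$, and $\varphi(c)$ is the identity on $X$ paired with this inclusion. In the mixed regime ($c<M+s\leq c+\eps$), the inclusion $f^{-1}(-\infty,c]\hookrightarrow X$ sends the empty subspace into $g^{-1}[2M+s-c-\eps,\infty)$ trivially. The maps $\psi(c)$ are defined symmetrically by swapping the roles of $f$ and $g$.

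Next I would verify naturality and the two interleaving identities in \eqref{eq:interleaving}. Naturality of $\varphi$ and $\psi$ is immediate, because every $F(c\leq d)$, $G(c\leq d)$, $\varphi(c)$, $\psi(c)$ is (induced by) an inclusion of pairs inside a common ambient pair, and inclusions compose to inclusions. For the identities, observe that $(\psi T_\eps)(c)\circ\varphi(c) : F(c) \to F(c+2\eps)$ is, in each regime, the composite of two inclusions of pairs whose effect on underlying sets is exactly the inclusion defining $F\eta_{2\eps}(c)$; so $(\psi T_\eps)\varphi = F\eta_{2\eps}$, and symmetrically $(\varphi T_\eps)\psi = G\eta_{2\eps}$. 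This shows $F$ and $G$ are $\eps$-interleaved in $\RPair$. Applying Proposition~\ref{prop:functoriality} to $H:\Pair \to \D$ then gives $HF$ and $HG$ are $\eps$-interleaved, so
\begin{equation*}
d(HF,HG)\leq \eps = \|f-g\|_\infty.
\end{equation*}

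The main obstacle is conceptual rather than computational: one must make sure the three regimes fit together consistently at the transition value $c=M+s$, and in particular that the mixed-regime maps $\varphi(c)$ and $\psi(c)$ are compatible with the inclusions that $F$ and $G$ themselves use to pass from $(\,\cdot\,,\emptyset)$-pairs to $(X,\,\cdot\,)$-pairs. Once one writes each map as an inclusion of pairs sitting inside a single ambient pair on the appropriate interval, all the required squares and triangles commute on the nose, and no further analytic work is needed.
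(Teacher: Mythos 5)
Your proposal is correct and follows essentially the same route as the paper: for $\eps = \|f-g\|_\infty$ one checks directly that $F(c) \subseteq G(c+\eps)$ and $G(c) \subseteq F(c+\eps)$ as pairs in each of the three regimes (ascending, descending, mixed), whence the inclusions assemble into an $\eps$-interleaving of $F$ and $G$ in $\RPair$, and Proposition~\ref{prop:functoriality} transports this to $HF$ and $HG$. The paper's proof is merely terser, stating the three inclusions without the explicit regime-by-regime justification, since the underlying pointwise estimate $|f(x)-g(x)|\le\eps$ handles all cases uniformly.
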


\begin{proof}
  Let $\eps = \norm{f-g}_{\infty}$.
  Let $c \in \R$.
  Then by assumption, $(f^{-1}(-\infty,c],\emptyset) \subseteq (g^{-1}(-\infty,c+\eps],\emptyset)$,
  $(X,f^{-1}[2M+s-c,\infty)) \subseteq (X,g^{-1}[2M+s-(c+\eps),\infty))$, and
  $(f^{-1}(-\infty,c],\emptyset) \subseteq (X,g^{-1}[2M+s-(c+\eps),\infty))$.
  Also, by assumption, we have the same relations with $f$ and $g$ switched.
  Thus, $F$ and $G$ are $\eps$-interleaved.
  It follows that $HF$ and $HG$ are $\eps$-interleaved (Proposition~\ref{prop:functoriality}), and thus
  \begin{equation*}
    d(HF,HG) \leq d(F,G) \leq \norm{f-g}_{\infty}. \qedhere
  \end{equation*}
\end{proof}

\section{Abelian structure of interleavings}
\label{sec:abelian}

Let $\D$ be a category and 
let $\eps \geq 0$.   
In this section, we consider the category $\Inter{D}$ of $\eps$-interleavings of diagrams in $\RD$, which we define below.
We will show that this construction is functorial, and that if $\D$ is an abelian category, then so is $\Inter{D}$.
As a corollary we obtain stability theorems for kernels, images and cokernels in persistence and extended persistence.

\medskip

First, let us recall that the functor $T_{\eps} : \mathbb{R} \rightarrow \mathbb{R}$, $T_{\eps}(x) = x + \eps$, $T_{\eps}(x \leq y) = x+\eps \leq y + \eps$, comes equipped with a ``unit'' natural transformation, $\eta_{\eps} : \Id \Rightarrow T_{\eps}$, since $x \leq x + \eps$.  We will write $\eta_{\eps}^{2}$ for the iteration $\Id \Rightarrow T_{\eps} \Rightarrow T_{\eps}^{2}$.

\begin{definition} \label{def:Inter}
The objects of $\Inter{D}$ are $\eps$-interleavings, $(F,G,\varphi,\psi)$, where $\varphi:F \Rightarrow GT_{\eps}$, $\psi:G \Rightarrow FT_{\eps}$, such that $(\psi T_{\eps}) \varphi = F \eta_{\eps}^{2}$ and $(\varphi T_{\eps}) \psi = G \eta_{\eps}^{2}$ (Definition~\ref{def:interleaving}).
A morphism $(\alpha,\beta):(F,G,\varphi,\psi) \To (F',G',\varphi',\psi')$ consists of a pair of natural transformations, $\alpha : F \Rightarrow F'$ and $\beta : G \rightarrow G'$, such that the diagrams,
\[
    \xymatrix{
        F \ar[r]^{\varphi} \ar[d]_{\alpha} & G T_{\eps} \ar[d]^{\beta T_{\eps}} \\
        F' \ar[r]_{\varphi'} & G' T_{\eps}
    }
\quad
\text{and}
\quad
    \xymatrix{
        G \ar[r]^{\psi} \ar[d]_{\beta} & FT_{\eps} \ar[d]^{\alpha T_{\eps}} \\
        G' \ar[r]_{\psi'} & F'T_{\eps}
    }
\]
commute.
\end{definition}

Let us also verify the naturality of the above construction.

\begin{proposition} \label{prop:InterFunctor}
  Definition~\ref{def:Inter} of $\Inter[\eps]{D}$ is functorial in $\eps$ and in $\D$.
\end{proposition}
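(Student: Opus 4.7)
The plan is to specify the action of $\Inter[\eps]{-}$ in each variable separately and then verify the functor axioms.

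For functoriality in $\D$: given a functor $H:\D \to \D'$, define $\Inter[\eps]{H} : \Inter[\eps]{D} \to \Inter[\eps]{D'}$ on objects by $(F,G,\varphi,\psi) \mapsto (HF,HG,H\varphi,H\psi)$ and on morphisms by $(\alpha,\beta) \mapsto (H\alpha,H\beta)$. That the image tuple is an $\eps$-interleaving is the content of the proof of Proposition~\ref{prop:functoriality}: applying $H$ to the identities $(\psi T_\eps)\varphi = F\eta_{2\eps}$ and $(\varphi T_\eps)\psi = G\eta_{2\eps}$ yields the analogous identities for $(HF, HG, H\varphi, H\psi)$. For a morphism $(\alpha,\beta)$, applying $H$ to the two commutative squares of Definition~\ref{def:Inter} yields the required squares for $(H\alpha, H\beta)$. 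Preservation of identities and composites follows from the corresponding properties of $H$ together with the compatibility of $H$ with horizontal composition of natural transformations.

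For functoriality in $\eps$: regard $[0,\infty)$ as a poset. For $\eps \leq \eps'$, set $\bar\eps = \eps' - \eps$ and define a functor from $\Inter[\eps]{D}$ to $\Inter[\eps']{D}$ on objects by sending $(F,G,\varphi,\psi)$ to $(F,G,\hat\varphi,\hat\psi)$ with $\hat\varphi = (G\eta_{\bar\eps}T_\eps)\varphi$ and $\hat\psi = (F\eta_{\bar\eps}T_\eps)\psi$, as in Lemma~\ref{lem:biggerInterleaving}; that lemma already verifies that the image is an $\eps'$-interleaving. On morphisms, send $(\alpha,\beta)$ to itself. The key step is checking that the same pair is a morphism in $\Inter[\eps']{D}$, i.e., that $(\beta T_{\eps'})\hat\varphi = \hat\varphi'\alpha$, where $\hat\varphi'$ is defined analogously from $(F',G',\varphi',\psi')$. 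This follows from the hypothesis $(\beta T_\eps)\varphi = \varphi'\alpha$ and naturality of $\eta_{\bar\eps}$ horizontally composed with $\beta$, which gives $(\beta T_{\eps'})(G\eta_{\bar\eps}T_\eps) = (G'\eta_{\bar\eps}T_\eps)(\beta T_\eps)$. Hence
\[
(\beta T_{\eps'})\hat\varphi = (G'\eta_{\bar\eps}T_\eps)(\beta T_\eps)\varphi = (G'\eta_{\bar\eps}T_\eps)\varphi'\alpha = \hat\varphi'\alpha,
\]
and the identity for $\hat\psi$ is symmetric. When $\eps' = \eps$, $\bar\eps = 0$ and $\eta_0$ is the identity, so $\hat\varphi = \varphi$, recovering the identity functor on $\Inter[\eps]{D}$. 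For $\eps \leq \eps' \leq \eps''$, iterating the construction yields $(G\eta_{\eps'' - \eps'}T_{\eps'})(G\eta_{\eps' - \eps}T_\eps)\varphi$, which by $\eta_b \eta_c = \eta_{b+c}$ collapses to $(G\eta_{\eps''-\eps}T_\eps)\varphi$, the value of the direct assignment from $\Inter[\eps]{D}$ to $\Inter[\eps'']{D}$.

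The main obstacle is the morphism compatibility check in the $\eps$-functoriality step: one must untangle the horizontal composites $(\beta T_{\eps'})(G\eta_{\bar\eps}T_\eps)$ and interchange them via naturality of $\eta_{\bar\eps}$. Once this is unwound, the rest reduces to the already-established identities $\eta_b \eta_c = \eta_{b+c}$ and the compatibility of $(\alpha,\beta)$ with the original interleaving data. An analogous remark applies in the $\D$ direction, where all verifications are formal consequences of functoriality of $H$.
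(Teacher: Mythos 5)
Your proposal is correct and follows essentially the same approach as the paper: functoriality in $\D$ is delegated to the computations of Proposition~\ref{prop:functoriality}, and functoriality in $\eps$ uses Lemma~\ref{lem:biggerInterleaving} on objects together with the interchange identity $(\beta T_{\eps'})(G\eta_{\bar\eps}T_\eps) = (G'\eta_{\bar\eps}T_\eps)(\beta T_\eps)$ on morphisms, which is exactly the right-hand square of the paper's commutative diagram. You supply slightly more explicit detail on the preservation of identities and composites, but the core argument is identical.
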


\begin{proof}
  Let $\eps \leq \eps'$. 
  The functor, $\Inter[\eps]{D} \to \Inter[\eps']{D}$, is defined on objects by Lemma~\ref{lem:biggerInterleaving}.
To be precise, we have $(F,G,\varphi,\psi) \mapsto (F,G,\hat{\varphi},\hat{\psi})$, where 
$\hat{\varphi} = (G\eta_{\eps'-\eps}T_{\eps})\varphi$, 
   and $\hat{\psi} = (F\eta_{\eps'-\eps}T_{\eps})\psi$. 
Let $(\alpha,\beta): (F,G,\varphi,\psi) \to (F',G',\varphi',\psi') \in \Inter{D}$.
Then the following commutative diagram
\begin{equation*}
  \xymatrix{F \ar[d]_{\alpha} \ar[r]^{\varphi} & GT_{\eps} \ar[d]^{\beta T_{\eps}}  \ar[r]^{G\eta_{\eps'-\eps}T_{\eps}} & GT_{\eps'} \ar[d]^{\beta T_{\eps'}} \\
    F' \ar[r]_{\varphi'} & G'T_{\eps} \ar[r]_{G'\eta_{\eps'-\eps}T_{\eps}} & G'T_{\eps'}
}
\end{equation*}
and a similar one show that $(\alpha,\beta): (F,G,\hat{\varphi},\hat{\psi}) \to (F',G',\hat{\varphi}',\hat{\psi}') \in \Inter[\eps']{D}$.
From this it follows that $\Inter[\eps]{D}$ is functorial in $\eps$.

Now consider a functor $H: \D \to \D'.$ The induced functor $\Inter{D} \to \Inter{D'}$ is defined by composition with $H$ (for the details of the definition on objects, see the proof of Proposition~\ref{prop:functoriality}). 
It follows that $\Inter[\eps]{D}$ is functorial in $\D$.
\end{proof}

Let $\A$ be an abelian category. Then, as discussed at the start of Section~\ref{sec:vec}, so is $\RA$. 
We claim that $\Inter{A}$ is also an abelian category.  Recall (Section~\ref{subsubsec:abcat}) that a category is \emph{abelian} if it has a zero object, all finite products and coproducts, every morphism has a kernel and a cokernel, and all monomorphisms and epimorphisms are kernels and cokernels, respectively.
 
\begin{lemma}\label{lem:inter-zero}
The category $\Inter{A}$ has a zero object.
\end{lemma}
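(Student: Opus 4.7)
The plan is to take the candidate zero object to be $(0,0,0,0)$, where the first two $0$'s denote the constant zero diagram $0 \in \RA$ (which exists because $\A$, hence $\RA$, is abelian, with zero computed objectwise), and the second two $0$'s denote the zero natural transformations $0 \To 0T_\eps = 0$. First I would check this is a legitimate object of $\Inter{A}$: the interleaving identities $(\psi T_\eps)\varphi = F\eta_\eps^2$ and $(\varphi T_\eps)\psi = G\eta_\eps^2$ both read $0 = 0\eta_\eps^2$, which holds since $0\eta_\eps^2$ is the zero natural transformation $0 \To 0T_{2\eps} = 0$ (every morphism out of $0$ in $\A$ is the zero morphism).

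Next I would verify the universal properties. Let $(F,G,\varphi,\psi) \in \Inter{A}$ be arbitrary. For initiality, a morphism $(0,0,0,0) \to (F,G,\varphi,\psi)$ consists of natural transformations $\alpha: 0 \To F$ and $\beta: 0 \To G$ making the two compatibility squares of Definition~\ref{def:Inter} commute. Since $0$ is the zero object of $\RA$, both $\alpha$ and $\beta$ are uniquely determined as the zero natural transformations, and the compatibility squares commute because every composite around either square factors through the zero object. Terminality is dual: the only pair $(\alpha,\beta)$ with $\alpha : F \To 0$ and $\beta: G \To 0$ is the zero pair, and the compatibility squares again commute trivially.

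Finally I would conclude that $(0,0,0,0)$ is both initial and terminal, hence a zero object of $\Inter{A}$. There is no real obstacle here; the only point to be careful about is confirming that the interleaving axioms and the two naturality squares defining a morphism in $\Inter{A}$ all reduce to trivial identities when the relevant diagrams or natural transformations are zero, which follows from the fact that $0$ absorbs composition in any preadditive category (and in particular in any abelian category, by Freyd's theorem cited in Section~\ref{subsubsec:abcat}).
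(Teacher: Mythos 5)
Your proof is correct and essentially identical to the paper's: both take the constant zero diagram in $\RA$ paired with itself via the trivial interleaving, then check initiality and terminality componentwise using that $0$ is a zero object in $\A$. Note that your ``zero natural transformation'' $0 \To 0 T_\eps$ and the paper's ``identity'' $\omega_x : O_x \to O_{x+\eps}$ are the same morphism, since the unique endomorphism of the zero object is both the identity and the zero map.
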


\begin{proof}
The zero object of $\Inter{A}$ comes from the zero object, 0, of $\A$.  The diagram category $\cat{Vec}^{(\mathbb{R},\leq)}$ then inherits the constant zero diagram, $O_{x} = 0$ for all $x \in \mathbb{R}$, with the identity morphism $O_{x} \rightarrow O_{y}$ for $x \leq y$.  In turn, we define the trivial $\eps$-interleaving, $(O,O,\omega,\omega)$, where $\omega_{x} : O_{x} \rightarrow O_{x+\eps}$ is again the identity.  It is easy to see that $(O,O,\omega,\omega)$ is the desired zero object.  Indeed, to see that it is initial, we note that for every interleaving $(F,G,\varphi,\psi)$, and for every $x \in \mathbb{R}$, there are unique morphisms $O_{x} \rightarrow F_{x}$ and $O_{x} \rightarrow G_{x}$ because $O_{x}$ is initial in $\A$, and the appropriate diagrams commute.  Similarly, $(O,O,\omega,\omega)$ is final, and hence the desired zero object in $\Inter{A}$.  
\end{proof}

In particular, for any objects $X,Y \in \Inter{A}$, we now have the zero morphism $0 : X \rightarrow Y$, that is the composite $X \rightarrow O \rightarrow Y$.

\begin{lemma}\label{lem:inter-pbpo}
The category $\Inter{A}$ has all pull-backs and push-outs, and their components in $\RA$ are given by the respective pull-backs and push-outs in $\RA$.
\end{lemma}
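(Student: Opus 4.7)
The plan is to build all pull-back data componentwise, exploiting that $\RA$ inherits pullbacks from $\A$ and that the translation functor $T_\eps$ preserves them. Given a cospan $(F_1,G_1,\varphi_1,\psi_1) \xrightarrow{(\alpha_1,\beta_1)} (F_0,G_0,\varphi_0,\psi_0) \xleftarrow{(\alpha_2,\beta_2)} (F_2,G_2,\varphi_2,\psi_2)$ in $\Inter{A}$, I would first form $P := F_1 \times_{F_0} F_2$ and $Q := G_1 \times_{G_0} G_2$ as pullbacks in $\RA$, with projections $p_i$ and $q_i$. Since $T_\eps$ acts on $\RA$ by precomposition with a functor on $(\R,\leq)$, it preserves all limits, so $QT_\eps$ is canonically the pullback of $\beta_1 T_\eps$ and $\beta_2 T_\eps$, and similarly for $PT_\eps$.

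Next I would produce the interleaving data $(\tilde\varphi,\tilde\psi)$ on $(P,Q)$. The pair $\varphi_i p_i : P \To G_i T_\eps$ is compatible because the squares in Definition~\ref{def:Inter} together with $\alpha_1 p_1 = \alpha_2 p_2$ yield
\begin{equation*}
(\beta_1 T_\eps)(\varphi_1 p_1) = \varphi_0 \alpha_1 p_1 = \varphi_0 \alpha_2 p_2 = (\beta_2 T_\eps)(\varphi_2 p_2),
\end{equation*}
so the universal property of $QT_\eps$ furnishes a unique $\tilde\varphi : P \To QT_\eps$ with $(q_i T_\eps)\tilde\varphi = \varphi_i p_i$. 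A symmetric construction yields $\tilde\psi : Q \To PT_\eps$ with $(p_i T_\eps)\tilde\psi = \psi_i q_i$.

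To verify $(\tilde\psi T_\eps)\tilde\varphi = P\eta_{2\eps}$, I would invoke the uniqueness clause of the pullback universal property for $PT_{2\eps}$, so that it suffices to check equality after composing with each $p_i T_{2\eps}$. The left side collapses to $(\psi_i T_\eps)(q_i T_\eps)\tilde\varphi = (\psi_i T_\eps)\varphi_i p_i = F_i\eta_{2\eps}\circ p_i$ by the interleaving axiom on the $i$th input; the right side equals the same expression by naturality of $\eta_{2\eps}$ along $p_i$. The dual identity $(\tilde\varphi T_\eps)\tilde\psi = Q\eta_{2\eps}$ is handled the same way. The universality of $(P,Q,\tilde\varphi,\tilde\psi)$ as a pullback in $\Inter{A}$ then reduces to the componentwise case: any competing cone in $\Inter{A}$ induces unique maps into $P$ and $Q$ in $\RA$, and compatibility with $\tilde\varphi,\tilde\psi$ is again forced by pullback-uniqueness on the $T_\eps$-translate, proving both existence and uniqueness of the induced morphism in $\Inter{A}$.

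Push-outs are constructed by the exactly dual procedure, using that $T_\eps$ equally preserves colimits in $\RA$. The main obstacle is not any individual step but the systematic bookkeeping: every identity of natural transformations between pulled-back (or pushed-out) diagrams must be established by reducing to its two projections, at which point it collapses to an interleaving axiom already known on one of the input objects. Once that pattern is established for $\tilde\varphi$ and $\tilde\psi$, all remaining verifications, including the full universal property and the componentwise claim in the statement, become formal consequences of limit preservation by $T_\eps$.
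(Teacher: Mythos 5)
Your proposal follows essentially the same route as the paper: form the pull-backs $F_1 \times_{F_0} F_2$ and $G_1 \times_{G_0} G_2$ componentwise in $\RA$, observe that $T_\eps$ (being precomposition) preserves these pull-backs, induce the interleaving transformations $\tilde\varphi$, $\tilde\psi$ from the universal property, and verify the interleaving identities by appealing to uniqueness after composing with the projections. The paper packages that final verification into a single commutative diagram rather than listing the two projection checks separately, but the argument is identical, and your treatment of the compatibility condition for $\tilde\varphi$ and of the universal property of the resulting object in $\Inter{A}$ is correct and complete.
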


\begin{proof}
We show that $\Inter{A}$ has all pull-backs. The arguments and constructions for push-outs are dual.

Consider the diagram
\[
	(F',G',\varphi',\psi') \xrightarrow{(\alpha',\beta')} (F,G,\varphi,\psi) \xleftarrow{(\alpha'',\beta'')} (F'',G'',\varphi'', \psi'').
\]
The category $\RA$
is abelian.  Thus we may form the pull-back functors,
\[
	\xymatrix{
		F' \times_{F} F'' \ar[r]^{\pi'_{F}} \ar[d]_{\pi''_{F}}
			& F'	\ar[d]^{\alpha'}	\\
		F'' \ar[r]_{\alpha''} & F
	}
	\quad
	\text{and}
	\quad
	\xymatrix{
		G \times_{G} G'' \ar[r]^{\pi'_{G}} \ar[d]_{\pi''_{G}}	& G' \ar[d]^{\beta'}	\\
		G''	\ar[r]_{\alpha''}							& F.
	}
\]

To simplify the notation, let $T = T_{\eps}$ and $\eta = \eta_{\eps}$.  
Observe that
\[
	G'T \times_{GT} G''T = \left( G' \times_{G} G'' \right) T,
\]
and so from the universal property of pull-backs we obtain the natural transformation,
\[
	\Phi = \varphi' \times_{\varphi} \varphi'' : F' \times_{F} F''  \rightarrow \left(G' \times_{G} G''\right) T.
\]
Similarly, we get a natural transformation
\[
	\Psi = \psi' \times_{\psi} \psi'' : G' \times_{G} G'' \rightarrow \left( F' \times_{F} F'' \right) T.
\]
We need to check that $(F' \times_{F} F'', G' \times_{G} G'', \Phi, \Psi)$ is an $\eps$-interleaving that is indeed the relevant pull-back.

To see that we have an interleaving, it only remains to show that $(\Phi T)\Psi = (G' \times_{G} G'')\eta^2$ and $(\Psi T)\Phi = (F' \times_{F} F'')\eta^{2}$.  We prove the second identity. The verification of the first is symmetric.

We observe that $(\Psi T)\Phi$ is one morphism $F' \times_{F} F'' \rightarrow (F' \times_{F} F'')T^{2}$ that provides the unique dotted arrow (by the universal property of the pull-back) making the diagram
\[
    \xymatrix{
        F' \times_{F} F'' \ar[rrr]^{\pi'_F} \ar[ddd]_{\pi''_F} \ar@{.>}[dr] 
            & & & F' \ar[dl]_{(\psi' T)\varphi'}^{=F'\eta^{2}} \ar[ddd]^{\alpha'}    \\
        & (F'\times_{F} F'')T^{2} \ar[r]^-{\pi'_F T^2} \ar[d]_{\pi''_F T^2}
            & F'T^{2} \ar[d]^{\alpha' T^2} \\
        & F''T^{2} \ar[r]_{\alpha'' T^2}
            & FT^{2}    \\
        F'' \ar[rrr]_{\alpha''} \ar[ur]^{(\psi'' T)\varphi''}_{=F''\eta^{2}}
            & & & F \ar[ul]^{(\psi T)\varphi}_{=F\eta^{2}}
    }
\]
commute.  Since $(F' \times_{F} F'')\eta^{2}$ also fits the diagram, by uniqueness, $(\Psi T)\Phi = (F' \times_{F} F'')\eta^{2}$.
\end{proof}

\begin{corollary} \label{cor:finiteProducts}
The category $\Inter{A}$ has all finite products and coproducts. Every morphism in $\Inter{A}$ has a kernel and a cokernel.
\end{corollary}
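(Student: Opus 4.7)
The plan is to obtain all four constructions formally from Lemmas~\ref{lem:inter-zero} and~\ref{lem:inter-pbpo}. Once one has a zero object together with all pull-backs and push-outs, products, coproducts, kernels and cokernels come essentially for free; no further hand calculation with $\eps$-interleavings is needed.

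For the product $X \times Y$ of two objects $X, Y \in \Inter{A}$, I would form the pull-back $X \times_{0} Y$ of the unique morphisms $X \to 0$ and $Y \to 0$, which exists by Lemma~\ref{lem:inter-pbpo}. The universal property of the product follows immediately from the universal property of this pull-back: any pair of morphisms $f_{X}:W \to X$, $f_{Y}:W \to Y$ automatically satisfies $0 \circ f_{X} = 0 = 0 \circ f_{Y}$, and so factors uniquely through $X \times_{0} Y$. Dually, the coproduct $X \oplus Y$ is realized as the push-out $X \oplus_{0} Y$ of $0 \to X$ and $0 \to Y$. Iterating gives all finite products and coproducts, with the empty product and coproduct being the zero object itself.

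For the kernel of a morphism $(\alpha,\beta):(F,G,\varphi,\psi) \to (F',G',\varphi',\psi')$, I would take the pull-back of $(\alpha,\beta)$ along the zero morphism $0 \to (F',G',\varphi',\psi')$. The universal property of the pull-back, applied to morphisms $W \to (F,G,\varphi,\psi)$ whose composite with $(\alpha,\beta)$ is zero, is exactly the universal property of the equalizer of $(\alpha,\beta)$ and $0$, namely a kernel in the sense of Section~\ref{subsubsec:abcat}. The cokernel is constructed dually as the push-out of $(\alpha,\beta)$ along $(F,G,\varphi,\psi) \to 0$.

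There is essentially no obstacle, since the substantive work is already carried out in Lemmas~\ref{lem:inter-zero} and~\ref{lem:inter-pbpo}. The only thing worth verifying is that the componentwise formulas from Lemma~\ref{lem:inter-pbpo} specialize correctly when one leg of the pull-back or push-out is the zero object; this is immediate because the zero object of $\Inter{A}$ has both components equal to the constant zero diagram in $\RA$ (which is the zero object of $\RA$), and the morphisms to and from it are the (unique) zero morphisms, so the componentwise pull-back or push-out reduces to the corresponding product/coproduct or kernel/cokernel in $\RA$.
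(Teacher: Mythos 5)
Your proposal matches the paper's proof: the paper also obtains finite products from pull-backs over the terminal (= zero) object, finite coproducts from push-outs under the initial object, and the kernel of $(\alpha,\beta)$ as the pull-back along the initial morphism $(O,O,\omega,\omega)\rightarrow(F',G',\varphi',\psi')$, with cokernels dual. Your version just spells out the standard categorical facts (terminal object $+$ pull-backs $\Rightarrow$ finite products, etc.) that the paper invokes more tersely, so it is the same argument.
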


\begin{proof}
Since $\Inter{A}$ has a terminal object and pull-backs, it has finite products.  Since the category has an initial object and push-outs, it has finite coproducts.  Since $\Inter{A}$ has a zero object and pull-backs, and the kernel of $(\alpha,\beta) : (F, G, \varphi, \psi) \rightarrow (F',G', \varphi', \psi')$ can be obtained by pulling back along the initial morphism $(O,O, \omega, \omega) \rightarrow (F',G', \varphi', \psi')$, every morphism has a kernel.  Similarly, every morphism has a cokernel since $\Inter{A}$ has a zero object and push-outs.
\end{proof}

It remains to show that every monomorphism is a kernel, and that every epimorphism is a cokernel.  Before doing so, we show that in a preadditive category, monomorphisms and epimorphisms are characterized by their trivial kernels and cokernels, respectively.  

\begin{lemma}\label{lem:mono-ker}\cite{mitchell}
Let $\cat{C}$ be a category with zero object, kernels and cokernels. If $f:X \rightarrow Y$ is a monomorphism in $\cat{C}$, then $\ker f = 0$.  Dually, if $f$ is an epimorphism, then $\coker f = 0$.
\end{lemma}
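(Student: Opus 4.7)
The plan is to exploit the defining properties of $\ker f$ and of the zero object in order to produce a mutually inverse pair between $0$ and $\ker f$. First I would show that the kernel morphism $j: \ker f \to X$ is itself the zero morphism $\ker f \to X$, and then upgrade this to an isomorphism $\ker f \cong 0$.

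For the first step, note that by definition of the kernel $fj=0$, and also $f\circ 0 = 0$ where ``$0$'' here denotes the zero morphism $\ker f \to X$ (that is, the composite $\ker f \to 0 \to X$). Since $f$ is a monomorphism, left cancellation yields $j = 0$.

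For the second step, let $\pi : \ker f \to 0$ be the unique morphism to the terminal object and $\iota : 0 \to X$ the unique morphism out of the initial object, so that $j = \iota\pi$. Since $f\iota : 0 \to Y$ must itself be the zero morphism (by uniqueness out of the initial object), the universal property of $j$ as the kernel of $f$ furnishes a unique $\tilde\iota : 0 \to \ker f$ with $j\tilde\iota = \iota$. The composite $\pi\tilde\iota : 0 \to 0$ equals $\Id_0$ by terminality. For the reverse composite, compute
\[
j(\tilde\iota\,\pi) \;=\; (j\tilde\iota)\pi \;=\; \iota\pi \;=\; 0 \;=\; j \;=\; j\,\Id_{\ker f},
\]
using the first step at the penultimate equality. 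Since $j$ is an equalizer, it is a monomorphism, so left cancellation gives $\tilde\iota\,\pi = \Id_{\ker f}$. Hence $\tilde\iota$ and $\pi$ are mutually inverse, and $\ker f \cong 0$.

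The dual statement, that $\coker f \cong 0$ when $f$ is an epimorphism, follows by a symmetric argument: if $q : Y \to \coker f$ is the cokernel, then $qf=0=0\circ f$ and the epi property gives $q=0$, after which the universal property of the cokernel produces an inverse to the unique morphism $\coker f \to 0$. The only delicate point in the whole argument is bookkeeping — one must carefully distinguish the zero object from the various zero morphisms at each stage and check that each invocation of a universal property is being applied to the correct diagram — but no substantive difficulty arises beyond this.
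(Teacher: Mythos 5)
Your proof is correct and follows essentially the same strategy as the paper's: both begin by using the monomorphism property to cancel in $fj = 0 = f\cdot 0$ and conclude that the kernel morphism $j$ is zero. The only difference is in the finishing step — you explicitly construct mutually inverse morphisms $\pi$ and $\tilde\iota$ between $\ker f$ and $0$, whereas the paper verifies that $0 \to X$ satisfies the universal property of the kernel and then invokes uniqueness of kernels up to isomorphism; these are interchangeable packagings of the same idea.
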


If the category is preadditive, then we have the following converse for Lemma~\ref{lem:mono-ker}.

\begin{lemma}\label{lem:ker-mono}
Let $\cat{C}$ be a preadditive category with zero object, kernels and cokernels.  If $f : X \rightarrow Y$ is a morphism in $\cat{C}$ with trivial kernel, then $f$ is a monomorphism.  Dually, if $f$ has trivial cokernel, then $f$ is an epimorphism.
\end{lemma}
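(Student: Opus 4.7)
The plan is to exploit the preadditive structure to reduce the monomorphism condition to a statement about a single morphism being zero, and then to apply the universal property of the kernel. We prove the statement for monomorphisms; the dual for epimorphisms is completely analogous by reversing the arrows.

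First I would suppose that $f: X \to Y$ has trivial kernel, meaning that the kernel object is isomorphic to $0$, and that $g, h: W \to X$ are morphisms satisfying $fg = fh$. Since $\cat{C}$ is preadditive, the set $\cat{C}(W,X)$ is an abelian group and composition is bilinear, so the equation $fg = fh$ is equivalent to $f(g-h) = 0$. Setting $k = g - h$, we thus have a morphism $k : W \to X$ with $fk = 0$.

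Next I would apply the universal property of the kernel: since $fk = 0$, there exists a unique morphism $\tilde{k} : W \to \ker f$ such that $j \tilde{k} = k$, where $j : \ker f \to X$ is the canonical map. By hypothesis $\ker f \cong 0$, so $\tilde{k}$ factors through the zero object, and hence $\tilde{k} = 0$. By bilinearity of composition, $k = j \tilde{k} = j \cdot 0 = 0$, so $g - h = 0$, i.e., $g = h$. This shows $f$ is a monomorphism.

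The dual argument uses the cokernel: assuming $\coker f = 0$ and $kf = \ell f$ for $k, \ell : Y \to Z$, preadditivity gives $(k - \ell) f = 0$, so by the universal property of the cokernel, $k - \ell$ factors through $\coker f = 0$ and hence vanishes. I do not expect any serious obstacle: the only subtlety is the explicit use of preadditivity (subtracting morphisms and the fact that composition with a zero morphism yields zero), which is exactly why the hypothesis was added.
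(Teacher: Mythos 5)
Your proof is correct and follows the same approach as the paper's: rewrite $fg = fh$ as $f(g-h)=0$ using preadditivity, factor $g-h$ through $\ker f \cong 0$ via the universal property, and conclude $g - h = 0$, with the dual argument for cokernels.
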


\begin{proof}
Suppose that $\ker f = 0$, and that $g,h:W \rightarrow X$ are morphisms that satisfy $fg = fh$.  Then using the preadditive structure of $\cat{C}$, we find that $f(g-h)=0$, and so $g-h$ factors through $\ker f = 0$.  It follows that $g-h=0$, so $g=h$. Therefore $f$ is a monomorphism.

Again, the proof of the dual statement is dual.
\end{proof}

Next, we show that the above characterization of monomorphisms and epimorphisms applies to our setting.

\begin{lemma}\label{lem:inter-additive}
The category $\Inter{A}$ is preadditive.
\end{lemma}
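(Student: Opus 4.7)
The plan is to transport the preadditive structure from $\RA$ to $\Inter{A}$ componentwise. Since $\A$ is abelian, it is preadditive by Freyd's theorem, and hence so is the functor category $\RA$: for any two objects $F,F' \in \RA$, the hom-set $\RA(F,F')$ is an abelian group under objectwise addition of natural transformations, and composition is bilinear.

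First I would define, for any pair of objects $X = (F,G,\varphi,\psi)$ and $X' = (F',G',\varphi',\psi')$ in $\Inter{A}$, the candidate abelian group structure on $\Inter{A}(X,X')$ by
\begin{equation*}
(\alpha_1,\beta_1) + (\alpha_2,\beta_2) := (\alpha_1 + \alpha_2,\, \beta_1 + \beta_2),
\end{equation*}
where the sums on the right are computed in the abelian groups $\RA(F,F')$ and $\RA(G,G')$. The zero element is $(0,0)$, which is a legitimate morphism since both coherence squares from Definition~\ref{def:Inter} trivially commute when $\alpha$ and $\beta$ are zero, using that precomposition and postcomposition with $0$ yield $0$.

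The main verification is that $(\alpha_1 + \alpha_2, \beta_1 + \beta_2)$ is again a morphism in $\Inter{A}$, i.e., that the two coherence squares commute. For the first,
\begin{equation*}
\varphi' \circ (\alpha_1 + \alpha_2) = \varphi'\alpha_1 + \varphi'\alpha_2 = (\beta_1 T_\eps)\varphi + (\beta_2 T_\eps)\varphi = \bigl((\beta_1+\beta_2) T_\eps\bigr)\varphi,
\end{equation*}
where the first and third equalities use bilinearity of composition in $\RA$, and the middle equality uses that $(\alpha_i,\beta_i)$ is a morphism. The second coherence square is symmetric. Given this, the group axioms (associativity, commutativity, inverses) lift directly from those of $\RA(F,F') \times \RA(G,G')$, since all operations are defined componentwise.

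Finally I would check bilinearity of composition. If $(\alpha,\beta):X \to X'$ and $(\alpha_1',\beta_1'),(\alpha_2',\beta_2'):X' \to X''$, then
\begin{equation*}
\bigl((\alpha_1',\beta_1') + (\alpha_2',\beta_2')\bigr)\circ(\alpha,\beta) = (\alpha_1'\alpha + \alpha_2'\alpha,\, \beta_1'\beta + \beta_2'\beta),
\end{equation*}
and similarly on the other side, each identity following from bilinearity of composition in $\RA$ applied to each coordinate. I expect no genuine obstacle here: the content is entirely that $\Inter{A}$ is a (non-full) subcategory of $\RA \times \RA$ carved out by linear coherence conditions, so the preadditive structure is automatically inherited once one checks that the subset of morphisms is closed under addition, which is the one-line calculation above.
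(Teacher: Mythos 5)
Your proof is correct and takes essentially the same approach as the paper: view $\Inter{A}(X,X')$ as a subset of $\RA(F,F') \times \RA(G,G')$, observe that the coherence conditions are preserved under the componentwise group operations, and note that bilinearity of composition is inherited by restriction. The only point the paper spells out that you compress is the explicit check that $(-\alpha,-\beta)$ is again a morphism; your observation that the coherence constraints are linear equations (the hom-set is the kernel of a group homomorphism $\RA(F,F')\times\RA(G,G') \to \RA(F,G'T_\eps)\times\RA(G,F'T_\eps)$) legitimately covers this, but it is worth stating that closure under addition alone does not make a subgroup, whereas being cut out by a homomorphism does.
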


\begin{proof}
  Let $X = (F,G,\varphi,\psi)$ and $X' = (F',G',\varphi,\psi)$ be
  $\eps$-interleavings. By definition, $\Inter{A}(X,Y) \subset
  \RA(F,F') \times \RA(G,G')$, which
  is itself an abelian group. One can readily verify that $(0,0) \in \Inter{A}(X,Y)$.  Since
  $\RA$ is preadditive, composition distributes over the
  addition of morphisms.  If $(\alpha,\beta),(\alpha',\beta') \in
  \Inter{A}(X,Y)$, then $\varphi'(\alpha + \alpha') = \varphi'\alpha +
  \varphi'\alpha' = (\beta T)\varphi + (\beta' T)\varphi =
  ((\beta+\beta')T)\varphi$, so $(\alpha+\alpha',\beta + \beta') \in
  \Inter{A}(X,Y)$.  Finally, we verify that if $(\alpha,\beta) \in
  \Inter{A}(X,Y)$, then so is its additive inverse. Since $\RA$ is preadditive, we have natural transformations $-\alpha$ and $-\beta$.  Since $0 = \varphi' 0 = \varphi' (\alpha +
  (-\alpha)) = \varphi'\alpha + \varphi'(-\alpha)$, it follows that
  $\varphi'(-\alpha) = - \varphi'\alpha$.  Similarly, $(-\beta)\varphi
  = - \beta \varphi$.  It follows that $(-\alpha,-\beta) \in
  \Inter{A}(X,Y)$.  Since $(\alpha,\beta)+(-\alpha,-\beta)=(0,0)$, $\Inter{A}(X,Y)$ is an abelian group.

Composition is bilinear since it is the restriction of composition in the additive category $\RA \times \RA$. 
\end{proof}

\begin{lemma}\label{lem:inter-ker}
In $\Inter{A}$, every monomorphism is a kernel, and every epimorphism is a cokernel.
\end{lemma}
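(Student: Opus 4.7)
The plan is to reduce the claim to the corresponding (well-known) fact in the abelian category $\RA$ by exploiting Lemma~\ref{lem:inter-pbpo}, which tells us that pullbacks and pushouts in $\Inter{A}$ are computed componentwise in $\RA$. So kernels and cokernels in $\Inter{A}$ are also computed componentwise, and I will chain together the $\RA$-level universal properties to promote a monomorphism $(\alpha,\beta)$ to a kernel of its own cokernel in $\Inter{A}$.

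First, suppose $(\alpha,\beta): X \to Y$ is a monomorphism in $\Inter{A}$, where $X = (F,G,\varphi,\psi)$ and $Y = (F',G',\varphi',\psi')$. Since $\Inter{A}$ is preadditive (Lemma~\ref{lem:inter-additive}) and has kernels (Corollary~\ref{cor:finiteProducts}), Lemma~\ref{lem:mono-ker} gives $\ker(\alpha,\beta) = 0$. The kernel in $\Inter{A}$ is the pullback of $(\alpha,\beta)$ along $0 \to Y$, and by Lemma~\ref{lem:inter-pbpo} its components are the kernels of $\alpha$ and $\beta$ in $\RA$. Thus $\ker\alpha = 0$ and $\ker\beta = 0$ in the abelian category $\RA$, so by Lemma~\ref{lem:ker-mono} the maps $\alpha$ and $\beta$ are themselves monomorphisms in $\RA$. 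In particular, each is canonically the kernel of its own cokernel in $\RA$.

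Next, form the cokernel $(q_{F'}, q_{G'}): Y \to Z$ of $(\alpha,\beta)$ in $\Inter{A}$. By the dual of Lemma~\ref{lem:inter-pbpo}, $q_{F'} = \coker\alpha$ and $q_{G'} = \coker\beta$ in $\RA$. I then claim that $(\alpha,\beta)$ itself satisfies the universal property of $\ker(q_{F'}, q_{G'})$ in $\Inter{A}$, so that $(\alpha,\beta) \isom \ker\bigl(\coker(\alpha,\beta)\bigr)$, which exhibits it as a kernel. The composite $(q_{F'}, q_{G'}) \circ (\alpha,\beta)$ vanishes componentwise. For the universal property, given $(\gamma,\delta): W \to Y$ in $\Inter{A}$ with $(q_{F'}, q_{G'}) \circ (\gamma,\delta) = 0$, say $W = (U,V,\chi,\omega)$, the componentwise universal property of $\ker q_{F'}$ and $\ker q_{G'}$ in $\RA$ yields unique $\gamma': U \to F$ and $\delta': V \to G$ with $\alpha\gamma' = \gamma$ and $\beta\delta' = \delta$. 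To promote $(\gamma',\delta')$ to a morphism of $\eps$-interleavings, one must check $\varphi\gamma' = (\delta' T_{\eps})\chi$; applying the monomorphism $\beta T_{\eps}$ on the left and using $\varphi'\alpha = (\beta T_{\eps})\varphi$ together with the compatibility condition on $(\gamma,\delta)$ reduces this to an identity that already holds in $Y$, and cancelling $\beta T_{\eps}$ completes the verification. The second interleaving square is handled symmetrically using that $\alpha T_{\eps}$ is monic.

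The claim for epimorphisms is the formal dual: a dual application of Lemmas~\ref{lem:mono-ker} and \ref{lem:ker-mono}, together with the pushout half of Lemma~\ref{lem:inter-pbpo}, shows the components are epimorphisms in $\RA$ and hence cokernels of their kernels, and one verifies that $(\alpha,\beta)$ satisfies the universal property of $\coker\bigl(\ker(\alpha,\beta)\bigr)$ by the same style of diagram chase. The main technical obstacle is exactly this last step: lifting a componentwise factorization in $\RA$ to a morphism in $\Inter{A}$, which requires using the freshly established monomorphism (resp.\ epimorphism) property of the components to cancel a map on each of the two naturality squares defining a morphism of interleavings.
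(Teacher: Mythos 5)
Your proposal is correct and follows essentially the same route as the paper: use Lemma~\ref{lem:mono-ker}, Lemma~\ref{lem:ker-mono}, Lemma~\ref{lem:inter-additive}, and the componentwise description of (co)kernels from Lemma~\ref{lem:inter-pbpo} to conclude that $\alpha$ and $\beta$ are monomorphisms in $\RA$, hence each is the kernel of its own cokernel there, and then exhibit $(\alpha,\beta)$ as the kernel of the induced cokernel map in $\Inter{A}$. The only difference is that where the paper writes ``It then follows that $(\alpha,\beta)$ is the kernel\ldots,'' you actually carry out the verification that the componentwise factorization $(\gamma',\delta')$ is a morphism of interleavings, by applying the monic $\beta T_{\eps}$ (resp.\ $\alpha T_{\eps}$) to reduce the required naturality square to one already known to commute; this is a correct and welcome elaboration of the step the paper leaves implicit.
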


\begin{proof}
Let $(\alpha,\beta) : (F,G,\varphi,\psi) \rightarrow (F', G', \varphi, \psi)$ be a monomorphism in $\Inter{A}$.  We will show that $(\alpha,\beta)$ is the kernel of the natural morphism 
\[
	\pi:(F', G', \varphi, \psi) \rightarrow \coker(\alpha,\beta).
\] 
First, we calculate $\ker(\alpha,\beta)$ in terms of $\ker\alpha$ and $\ker\beta$.  If we pull back the morphism $(\alpha,\beta): (F,G,\varphi,\psi) \rightarrow (F',G',\varphi',\psi')$ along the initial morphism $(O,O,\omega,\omega) \rightarrow (F',G',\varphi',\psi')$, we obtain the interleaving $(\ker\alpha, \ker\beta, \Phi, \Psi)$ constructed in the proof of Lemma~\ref{lem:inter-pbpo}.

Cokernels are obtained in a dual manner; we have that $\coker(\alpha,\beta) = (\coker\alpha,\coker\beta, \bar{\Phi}, \bar{\Psi})$.

By Lemma~\ref{lem:mono-ker}, every monomorphism has trivial kernel.  
Thus $\ker(\alpha,\beta) = (\ker\alpha,\ker\beta, \Phi, \Psi) = (O,O,\omega, \omega)$.  This means, in particular, that $\ker\alpha = \ker \beta = O$.  It follows from Lemmas~\ref{lem:ker-mono} and~\ref{lem:inter-additive} that $\alpha$ and $\beta$ are monomorphisms.  Since $\RA$ is abelian,  $\alpha$ is the kernel of the quotient map, $F' \rightarrow \coker\alpha$, and likewise $\beta$ is the kernel of $G' \rightarrow \coker\beta$.  It then follows that $(\alpha,\beta)$ is the kernel of the natural morphism, $(F',G',\Phi,\Psi) \rightarrow (\coker\alpha,\coker\beta,\bar{\Phi},\bar{\Psi})$.

The dual statement follows from the dual proof.
\end{proof}

Combining Lemma~\ref{lem:inter-zero}, Corollary~\ref{cor:finiteProducts} and Lemma~\ref{lem:inter-ker} we have the following. 

\begin{theorem} \label{thm:inter-abcat}
  Given an abelian category $\A$ and $\eps \geq 0$, the category $\Inter{A}$ of $\eps$-interleavings in $\A$ is an abelian category. \hfill \qedsymbol
\end{theorem}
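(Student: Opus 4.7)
The plan is to verify, one by one, the defining conditions of an abelian category from Section~\ref{subsubsec:abcat}, invoking the preceding lemmas. Recall that $\Inter{A}$ is abelian provided it has a zero object, admits all finite products and coproducts, every morphism has a kernel and a cokernel, and every monomorphism (respectively epimorphism) is a kernel (respectively cokernel).

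First, Lemma~\ref{lem:inter-zero} supplies the zero object, namely the trivial interleaving $(O,O,\omega,\omega)$. Next, Lemma~\ref{lem:inter-pbpo} gives all pull-backs and push-outs, computed componentwise in $\RA$; combining this with the zero object, Corollary~\ref{cor:finiteProducts} yields all finite products and coproducts (as pull-backs over and push-outs under the zero object), and realizes kernels and cokernels of any morphism $(\alpha,\beta)$ as the pull-back of $(\alpha,\beta)$ along $(O,O,\omega,\omega) \to (F',G',\varphi',\psi')$ and dually for the push-out. This already covers the first three requirements. The last two — that monomorphisms are kernels and epimorphisms are cokernels — are precisely the content of Lemma~\ref{lem:inter-ker}, which in turn rests on the preadditive structure of $\Inter{A}$ supplied by Lemma~\ref{lem:inter-additive} together with the general characterization of monos (resp.\ epis) in a preadditive category as the morphisms with trivial kernel (resp.\ cokernel), i.e.\ Lemmas~\ref{lem:mono-ker} and~\ref{lem:ker-mono}.

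With the lemmas in hand, the proof is merely an act of bookkeeping and no further work is required. The substantive input is really Lemma~\ref{lem:inter-pbpo}: one must check that the componentwise pull-back in $\RA$ inherits natural transformations $\Phi$ and $\Psi$ satisfying the interleaving identities $(\Psi T_\eps)\Phi = (F' \times_F F'')\eta^2$ and $(\Phi T_\eps)\Psi = (G' \times_G G'')\eta^2$, which uses that $T_\eps$ commutes with pull-backs (so that $(G' \times_G G'')T_\eps = G'T_\eps \times_{GT_\eps} G''T_\eps$). Once that is secured and the dual construction is done for push-outs, the theorem assembles formally, so my proof would simply read: combine Lemma~\ref{lem:inter-zero}, Corollary~\ref{cor:finiteProducts}, and Lemma~\ref{lem:inter-ker}.
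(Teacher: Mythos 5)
Your proof is correct and coincides exactly with the paper's argument: the theorem is stated with a \qedsymbol precisely because it is the formal combination of Lemma~\ref{lem:inter-zero}, Corollary~\ref{cor:finiteProducts}, and Lemma~\ref{lem:inter-ker}, which you correctly identify along with the supporting Lemmas~\ref{lem:inter-pbpo}, \ref{lem:inter-additive}, \ref{lem:mono-ker}, and~\ref{lem:ker-mono}. Your observation that the substantive content lies in Lemma~\ref{lem:inter-pbpo} (and in particular that $T_\eps$ commutes with pull-backs) accurately locates where the real work happens.
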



From Theorem~\ref{thm:inter-abcat} and Lemma~\ref{lem:inter-pbpo}, we immediately have the following two applications.

\begin{corollary} \label{cor:directSumInterleaving}
If the two pairs of diagrams $(F,G)$ and $(F',G')$ in  $\RA$ are $\eps$-interleaved, then so is the pair $(F \oplus F', G \oplus G')$.
\hfill \qedsymbol
\end{corollary}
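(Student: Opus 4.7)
The plan is to deduce this directly from the abelian structure of $\Inter{A}$ established in Theorem~\ref{thm:inter-abcat}. By that theorem (together with Corollary~\ref{cor:finiteProducts}), the category $\Inter{A}$ has binary coproducts. So, given two $\eps$-interleavings $(F,G,\varphi,\psi)$ and $(F',G',\varphi',\psi')$, their coproduct in $\Inter{A}$ is an $\eps$-interleaving of the form $(\widetilde{F},\widetilde{G},\widetilde{\varphi},\widetilde{\psi})$. It remains only to identify $\widetilde{F}$ with $F\oplus F'$ and $\widetilde{G}$ with $G\oplus G'$ in $\RA$.

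To carry out this identification, I would invoke Lemma~\ref{lem:inter-pbpo} together with the construction of Corollary~\ref{cor:finiteProducts}: coproducts in $\Inter{A}$ are obtained as push-outs under the zero object $(O,O,\omega,\omega)$, and push-outs in $\Inter{A}$ have components given by the respective push-outs in $\RA$. Since the push-out under the zero diagram computes the binary coproduct in $\RA$, the components of $(\widetilde{F},\widetilde{G},\widetilde{\varphi},\widetilde{\psi})$ are precisely $F \oplus F'$ and $G \oplus G'$. This immediately gives natural transformations
\[
\varphi \oplus \varphi' : F \oplus F' \To (G \oplus G')T_{\eps}, \qquad \psi \oplus \psi' : G \oplus G' \To (F \oplus F')T_{\eps},
\]
and these inherit the interleaving identities $(\psi \oplus \psi')T_{\eps} \circ (\varphi \oplus \varphi') = (F \oplus F')\eta_{2\eps}$ and symmetrically on the other side, because the identity $\eta_{2\eps}$ on a coproduct is the coproduct of the identities $\eta_{2\eps}$ on each summand.

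The only step that requires care is confirming that the coproduct in $\Inter{A}$ really does compute the naive componentwise coproduct, i.e.\ that the push-out description of Lemma~\ref{lem:inter-pbpo} (which is stated explicitly for pull-backs and dually for push-outs) specializes correctly when one pulls back along the zero morphisms. This is the main bookkeeping obstacle, but it is formal: the universal property of the push-out in $\Inter{A}$, tested against the constant zero morphisms, reduces to the universal property of the coproduct in $\RA$ on each component, so $\widetilde{F} \cong F \oplus F'$ and $\widetilde{G} \cong G \oplus G'$ canonically. Once this identification is in place, the corollary follows directly, and no additional verification of the interleaving equations is needed beyond invoking that $(\widetilde{F},\widetilde{G},\widetilde{\varphi},\widetilde{\psi})$ is by construction an object of $\Inter{A}$.
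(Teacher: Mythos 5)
Your proof is correct and takes essentially the same approach as the paper, which simply cites Theorem~\ref{thm:inter-abcat} and Lemma~\ref{lem:inter-pbpo} and leaves the corollary as immediate: use the abelian structure of $\Inter{A}$ to obtain a binary coproduct, then observe via Lemma~\ref{lem:inter-pbpo} that push-outs (hence coproducts, as push-outs under the zero object) have components computed in $\RA$, so that the coproduct of $(F,G,\varphi,\psi)$ and $(F',G',\varphi',\psi')$ has components $F\oplus F'$ and $G\oplus G'$. One small slip: when identifying coproducts you \emph{push out} along the two morphisms \emph{from} the zero object, not ``pull back along the zero morphisms''; the mathematics you describe is still the right one.
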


\begin{corollary} \label{cor:ker-im-coker-interleaving}
Let $(\alpha,\beta)$ be a morphism in $\Inter{A}$.
Then each of the following three pairs of diagrams in $\RA$ are $\eps$-interleaved: $(\ker \alpha, \ker \beta)$, $(\im \alpha, \im \beta)$, and $(\coker \alpha, \coker \beta)$.
\hfill \qedsymbol
\end{corollary}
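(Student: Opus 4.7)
The plan is to leverage Theorem~\ref{thm:inter-abcat}, which tells us that $\Inter{A}$ is itself an abelian category. Consequently the morphism $(\alpha,\beta)$ possesses a kernel, image, and cokernel in $\Inter{A}$, and each of these is by definition an $\eps$-interleaving of two diagrams in $\RA$. The task therefore reduces to identifying the underlying pair of diagrams in each case with $(\ker\alpha,\ker\beta)$, $(\im\alpha,\im\beta)$, and $(\coker\alpha,\coker\beta)$, respectively.

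For the kernel and cokernel cases, I would recall from the proof of Corollary~\ref{cor:finiteProducts} that $\ker(\alpha,\beta)$ is obtained by pulling back $(\alpha,\beta)$ along the canonical morphism $(O,O,\omega,\omega)\to(F',G',\varphi',\psi')$, and that $\coker(\alpha,\beta)$ is obtained by the dual push-out. By Lemma~\ref{lem:inter-pbpo}, such pull-backs and push-outs in $\Inter{A}$ are computed componentwise in $\RA$. Since $\RA$ is abelian, these componentwise pull-backs and push-outs along the zero object are precisely $\ker\alpha,\ker\beta$ and $\coker\alpha,\coker\beta$. This exhibits $\eps$-interleavings $(\ker\alpha,\ker\beta,\Phi,\Psi)$ and $(\coker\alpha,\coker\beta,\bar\Phi,\bar\Psi)$, settling those two cases.

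For the image, I would use the standard abelian-category identity $\im f = \ker(\coker f)$ inside $\Inter{A}$. By the preceding paragraph, $\coker(\alpha,\beta)=(\coker\alpha,\coker\beta,\bar\Phi,\bar\Psi)$, and the canonical projection $\pi:(F',G',\varphi',\psi')\to\coker(\alpha,\beta)$ has components the canonical quotient maps $F'\to\coker\alpha$ and $G'\to\coker\beta$ in $\RA$ (this follows from the universal property of the push-out used to construct $\coker(\alpha,\beta)$). Applying the componentwise kernel construction once more then yields $\im(\alpha,\beta) = (\ker(F'\to\coker\alpha), \ker(G'\to\coker\beta), \widetilde\Phi,\widetilde\Psi) = (\im\alpha,\im\beta,\widetilde\Phi,\widetilde\Psi)$, an $\eps$-interleaving of $\im\alpha$ and $\im\beta$.

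The only real point requiring care, rather than being a genuine obstacle, is the bookkeeping for the image: one must verify that the canonical morphism $\pi$ to the cokernel has components equal to the canonical quotient maps in $\RA$, so that taking a further kernel componentwise recovers the usual image. This verification is immediate from the componentwise description of push-outs in Lemma~\ref{lem:inter-pbpo} together with the uniqueness in the universal property of the cokernel, and no new argument beyond what was already established for Theorem~\ref{thm:inter-abcat} is needed.
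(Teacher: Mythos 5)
Your proposal is correct and follows exactly the approach the paper intends: the corollary is stated with a \qedsymbol as an immediate consequence of Theorem~\ref{thm:inter-abcat} together with Lemma~\ref{lem:inter-pbpo}, and your argument simply unpacks that implication — kernels and cokernels in $\Inter{A}$ are componentwise by the pull-back/push-out description, and the image follows from $\im = \ker \circ \coker$ applied componentwise. The only thing you spell out in more detail than the paper is the image case, but the reasoning is the same.
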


As an application of Corollary~\ref{cor:ker-im-coker-interleaving}, we get the following generalization of the Stability Theorem of~\cite{csehm:kernels}.

\begin{theorem}[Stability theorem for kernels, images and cokernels]
\label{thm:stability-kernels}
  Let $h:Y \to X$ be a continuous map of topological spaces.
  Let $f,f': X \to \R$ and $g,g':Y \to \R$ be (not necessarily continuous) maps,
  such that
\begin{equation} \label{eq:ineq_fgh}
 \text{for all }y \in Y, \ fh(y) \leq g(y) \text{ and }f'h(y) \leq g'(y).
\end{equation}
Let $F \in \RTop$ be given by $F(a) = f^{-1}(-\infty,a]$ and inclusion.
Define $F'$, $G$, and $G'$ similarly.
By \eqref{eq:ineq_fgh}, $h$ induces maps $\alpha: G \to F$ and $\beta: G' \to F'$ in $\RTop$.
Let $\A$ be an abelian category and $H:\Top \to \A$ be some functor.
Let $\eps = \max\{ \norm{f-f'}_{\infty}, \norm{g-g'}_{\infty} \}$. 
Then 
\begin{equation*}
  d(\ker H\alpha, \ker H\beta), d(\im H\alpha, \im H\beta), d(\coker H\alpha, \coker H\beta) \leq \eps.
\end{equation*}
\end{theorem}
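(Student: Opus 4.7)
The plan is to realize the given data as a morphism in the interleaving category $\Inter{A}$ and then invoke Corollary~\ref{cor:ker-im-coker-interleaving}. The choice of $\eps$ in the hypothesis bounds $\norm{f - f'}_{\infty}$ and $\norm{g - g'}_{\infty}$ simultaneously, which is exactly what is needed to build $\eps$-interleavings on both the source and the target of the morphisms induced by $h$.

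First, following the construction in the proof of Theorem~\ref{thm:stability}, I would produce $\eps$-interleavings $(F, F', \varphi_F, \psi_F)$ and $(G, G', \varphi_G, \psi_G)$ in $\RTop$, whose interleaving natural transformations are given at each $a \in \R$ by sublevel-set inclusions such as $\varphi_F(a) \colon f^{-1}(-\infty, a] \incl f'^{-1}(-\infty, a+\eps]$, which is well-defined because $f'(x) \leq f(x) + \eps$. Next, I would verify that $(\alpha, \beta)$ defines a morphism
\[
  (G, G', \varphi_G, \psi_G) \longrightarrow (F, F', \varphi_F, \psi_F)
\]
in $\Inter{\Top}$. Concretely, one checks that for every $a \in \R$ the square with edges $\alpha(a)$, $\varphi_F(a)$, $\varphi_G(a)$, $\beta(a+\eps)$ commutes, and similarly for the $\psi$-squares. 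Every such edge is either a subspace inclusion or the restriction of $h \colon Y \to X$, so both composites send $y \in g^{-1}(-\infty, a]$ to $h(y) \in f'^{-1}(-\infty, a+\eps]$, using $fh(y) \leq g(y) \leq a$ together with $f'(h(y)) \leq f(h(y)) + \eps \leq a + \eps$.

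Finally, I would apply the functor $H$ componentwise. By Proposition~\ref{prop:InterFunctor}, composition with $H$ sends $\eps$-interleavings to $\eps$-interleavings and morphisms in $\Inter{\Top}$ to morphisms in $\Inter{A}$, yielding a morphism $(H\alpha, H\beta)$ in $\Inter{A}$. Corollary~\ref{cor:ker-im-coker-interleaving} then immediately produces $\eps$-interleavings of the pairs $(\ker H\alpha, \ker H\beta)$, $(\im H\alpha, \im H\beta)$ and $(\coker H\alpha, \coker H\beta)$, which gives the claimed bound on all three interleaving distances. The main obstacle is the square-commutativity verification in the second step, but the hypotheses $fh \leq g$ and $f'h \leq g'$, together with the two supremum-norm bounds, are precisely what make it go through; once those squares commute at the level of $\Top$, functoriality of $H$ and of the $\Inter{-}$ construction do the rest.
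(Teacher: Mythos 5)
Your proof is correct and follows essentially the same route as the paper's: construct the $\eps$-interleavings of $(F,F')$ and $(G,G')$ via the sup-norm bound, observe that $(\alpha,\beta)$ is a morphism in $\Inter{\Top}$ because every map in the required squares is either an inclusion or (a restriction of) $h$, push through $H$ using Proposition~\ref{prop:InterFunctor}, and conclude with Corollary~\ref{cor:ker-im-coker-interleaving}. The only difference is that you spell out the pointwise verification of the squares, which the paper compresses into the observation that $\alpha$ and $\beta$ are both induced by $h$.
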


\begin{proof}
  By the definition of $\eps$, 
 $F \incl F'T_{\eps}$, $F' \incl FT_{\eps}$, $G \incl G'T_{\eps}$, and $G' \incl GT_{\eps}$ in $\RTop$. Since $\alpha$ and $\beta$ are both induced by $h$, the diagrams
  \begin{equation*}
    \xymatrix{G \ar@{^(->}[r]  \ar[d]_{\alpha} & G'T_{\eps} \ar[d]^{\beta T_{\eps}} \\
      F \ar@{^(->}[r] & F'T_{\eps}}
 \quad \text{and} \quad
   \xymatrix{G' \ar@{^(->}[r]  \ar[d]_{\beta} & GT_{\eps} \ar[d]^{\alpha T_{\eps}} \\
      F' \ar@{^(->}[r] & FT_{\eps}}
  \end{equation*}
commute, and thus $(\alpha,\beta) \in \Inter{\Top}$.
By functoriality $(H\alpha,H\beta) \in \Inter{A}$ (see Proposition~\ref{prop:InterFunctor}).
Apply Corollary~\ref{cor:ker-im-coker-interleaving} and Definition~\ref{def:interleaving-distance} to obtain the desired result.
\end{proof}

Strengthening \eqref{eq:ineq_fgh}, we obtain an extended persistence version of this theorem. It has essentially the same proof, so we omit it.

\begin{theorem}[Stability theorem for kernels, images and cokernels in extended persistence] \label{thm:stability-kernels-extended}
  Let $h:Y \to X$ be a continuous map of topological spaces.
  Let $f,f': X \to (-\infty,M]$ be (not necessarily continuous) maps.
  Let $g=fh$ and $g'=f'h$.
  Let $s > 0$.
Let $F \in \RPair$ be given by $F(c) = (f^{-1}(-\infty,c],\emptyset)$ if $c < b + s$ and $F(c) = (X,f^{-1}[2b+s-c,\infty))$ if $c \geq b+s$, and inclusion.
Define $F'$, $G$, and $G'$ similarly.
Then $h$ induces maps $\alpha: G \to F$ and $\beta: G' \to F'$ in $\RPair$.
Let $\A$ be an abelian category and $H:\Pair \to \A$ be some functor.
Then 
\begin{equation*}
  d(\ker H\alpha, \ker H\beta), d(\im H\alpha, \im H\beta), d(\coker H\alpha, \coker H\beta) \leq 
\norm{f-f'}_{\infty}.
\end{equation*}
\end{theorem}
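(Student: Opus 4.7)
The plan is to mirror the proof of Theorem~\ref{thm:stability-kernels}, but instead of invoking Theorem~\ref{thm:stability} to produce the interleaving of the level diagrams, invoke the argument behind Theorem~\ref{thm:stability-extended}, and then apply Corollary~\ref{cor:ker-im-coker-interleaving} to pass to kernels, images and cokernels.

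First, I would set $\eps = \norm{f-f'}_\infty$ and observe that $\norm{g-g'}_\infty = \sup_{y\in Y}\abs{f(h(y))-f'(h(y))} \leq \eps$. The proof of Theorem~\ref{thm:stability-extended} then shows that $F$ and $F'$ are $\eps$-interleaved in $\RPair$ via natural transformations whose components are the inclusions of pairs induced pointwise by the inequality $\abs{f-f'}\leq \eps$ (checked in each of the three regimes $c<M+s$, $c=M+s$, $c>M+s$). The same reasoning gives an $\eps$-interleaving of $G$ and $G'$ whose components are inclusions of pairs inside $Y$.

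Next I would check that $(\alpha,\beta):(G,G',\ldots)\To(F,F',\ldots)$ is a morphism in $\Inter[\eps]{\Pair}$. Both $\alpha$ and $\beta$ are induced by the same continuous map $h:Y\to X$, while the interleaving natural transformations for $(F,F')$ and $(G,G')$ are inclusions of pairs. Consequently, both compatibility squares of Definition~\ref{def:Inter} (for $\varphi,\varphi'$ and for $\psi,\psi'$) commute on the nose: each route around such a square is a restriction of $h$ to a suitable subspace of $Y$ landing in the corresponding subspace of $X$. Hence $(\alpha,\beta)\in\Inter[\eps]{\Pair}$.

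Now apply the functor $H:\Pair\to\A$. By Proposition~\ref{prop:InterFunctor}, the induced functor $\Inter[\eps]{\Pair}\to\Inter[\eps]{\A}$ carries $(\alpha,\beta)$ to a morphism $(H\alpha,H\beta)\in\Inter[\eps]{\A}$. By Corollary~\ref{cor:ker-im-coker-interleaving}, the three pairs $(\ker H\alpha,\ker H\beta)$, $(\im H\alpha,\im H\beta)$, and $(\coker H\alpha,\coker H\beta)$ are each $\eps$-interleaved in $\RA$, and the desired bound follows from Definition~\ref{def:interleaving-distance}.

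The main obstacle I anticipate is the bookkeeping at the transition value $c=M+s$, where the defining formula for $F(c)$ (and for $F'(c)$, $G(c)$, $G'(c)$) changes from a sublevel pair $(f^{-1}(-\infty,c],\varnothing)$ to an upper-excursion pair $(X,f^{-1}[2M+s-c,\infty))$. One must verify carefully that the interleaving inclusions, and the map-of-pairs requirement for $\alpha$ and $\beta$ induced by $h$, behave correctly across this transition---in particular that the image under $h$ of the relevant subspace of $Y$ lies in the relevant subspace of $X$ in every one of the nine possible combinations of regimes for $c$ and $c+\eps$. Once this pointwise verification is done, the categorical machinery of Section~\ref{sec:abelian} handles everything else automatically.
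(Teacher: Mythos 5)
Your proposal is correct and follows exactly the route the paper intends: the paper explicitly declares the proof ``essentially the same'' as Theorem~\ref{thm:stability-kernels}, with the sublevel-set interleaving of Theorem~\ref{thm:stability} replaced by the extended-persistence interleaving from the proof of Theorem~\ref{thm:stability-extended}, followed by the observation that $(\alpha,\beta)$ is a morphism in $\Inter[\eps]{\Pair}$ because both components are induced by $h$ and the interleaving maps are inclusions, and then an application of Proposition~\ref{prop:InterFunctor} and Corollary~\ref{cor:ker-im-coker-interleaving}. You also correctly identify why the bound simplifies to $\norm{f-f'}_\infty$ alone (namely $\norm{g-g'}_\infty=\norm{fh-f'h}_\infty\leq\norm{f-f'}_\infty$), and your attention to the casework across the transition value $M+s$ is exactly the verification the paper leaves implicit.
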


\section{Future work}
\label{sec:future}

In this paper we have studied persistence by considering diagrams indexed by $\cat{(\R,\leq)}$. 
However there are versions of persistence in which the objects of study can be viewed as diagrams with more general indexing categories. For example, we would like to be able to consider diagrams indexed by $(\R^n,\leq)$ for multi-dimensional persistence, $S^1$ for circle-valued persistence, and the category $\cdot \to \cdot \from \cdot \to \cdots \from \cdot$ for zig-zag persistence. 
This generalization will be presented in~\cite{bdss:1}.

It would be nice to have a categorical definition of bottleneck
distance arbitrary $(\R,\leq)$-indexed diagrams of vector spaces and to
have a corresponding Isometry Theorem.

\section*{Acknowledgments}

The first author would like to thank Vin de Silva, Robert Ghrist,
David Lipsky, John Oprea and Radmila Sazdanovic for helpful
conversations. Both authors thank Peter Landweber for many helpful
corrections.
The first author also gratefully acknowledges the support from AFOSR grant
 FA9550-13-1-0115.



\newcommand{\etalchar}[1]{$^{#1}$}

\end{document}